\newtheorem{theorem}{Theorem}[section]
\newtheorem{lemma}[theorem]{Lemma}
\newtheorem{proposition}[theorem]{Proposition}
\newtheorem{corollary}[theorem]{Corollary}
\newtheorem{conjecture}[theorem]{Conjecture}
\theoremstyle{remark}
\newtheorem{remark}[theorem]{Remark}
\newtheorem{example}[theorem]{Example}
\newtheorem{definition}[theorem]{Definition}
\newtheorem{conventions}[theorem]{Conventions}
\numberwithin{equation}{section}
\newcommand{\p}{\partial}
\newcommand{\op}{\operatorname}
\newcommand{\PP}{\mathbb{P}}
\newcommand{\M}{\overline{\mathcal{M}}}
\newcommand{\K}{\mathcal{K}}
\newcommand{\X}{\mathcal{X}}
\newcommand{\vir}{{\rm vir}}
\newcommand{\ev}{{\rm ev}}
\newcommand{\Res}{{\rm Res}}
\newcommand{\td}{{\rm td}}
\newcommand{\ch}{{\rm ch}}
\newcommand{\tr}{{\rm tr}}
\newcommand{\Tr}{{\rm Tr}}
\newcommand{\Aut}{{\rm Aut}}
\newcommand{\fake}{{\rm fake}}
\newcommand{\Ext}{{\rm Ext}}
\newcommand{\KK}{{K}}
\newcommand{\Coeff}{{\rm Coeff}}
\newcommand{\leg}{{\rm leg}}
\newcommand{\tail}{{\rm tail}}
\def\O{\mathcal{O}}
\newcommand{\GW}{{\mathrm{GW}}}
\newcommand{\GV}{{\mathrm{GV}}}
\newcommand{\dd}{{\mathbf{d}}}
\newcommand{\e}{\epsilon}
\newcommand{\tw}{\mathrm{tw}}
\title[QK $=$ GV I. The quintic $3$-fold]{Quantum $K$-invariants and Gopakumar--Vafa invariants I. \\ The quintic threefold}
	\author{Y.-C.~Chou}
	\email{bensonchou72@gmail.com, chou@math.utah.edu}
	\author{Y.-P.~Lee}
	\email{yplee.math@gmail.com, ypleemath@gate.sinica.edu.tw}
\address{Institute of Mathematics, Academia Sinica, Taipei 10617, Taiwan, and
Department of Mathematics, University of Utah, 	Salt Lake City, Utah 84112-0090, U.S.A.}
\date{\today}
\begin{document}

\maketitle

\begin{abstract}
We prove a conjecture of Jockers--Mayr and Garoufalidis--Scheidegger, relating genus zero quantum $K$-invariants and Gopakumar--Vafa invariants on the quintic threefold.
\end{abstract}

\tableofcontents

\setcounter{section}{-1}

\section{Introduction}

\subsection{Relating enumerative invariants on quintic threefold}
On the Calabi--Yau threefolds (CY3), there are two sets of \emph{integral} enumerative invariants.
The first, called the \emph{Gopakumar--Vafa invariants} \cite{Gopakumar_Vafa_1, Gopakumar_Vafa_2}, was introduced 
in theoretical physics as ``new topological invariants on \emph{Calabi--Yau threefolds}'', counting the
``numbers of BPS states''.  We refer the readers to \cite{Maulik_Toda_2018} and references therein for various mathematical definitions of these invariants. There is a simple relation between the Gopakumar--Vafa invariants and the Gromov--Witten invariants \eqref{e:1.1} \eqref{e:1.2}, 
\begin{equation*} 
\begin{split}
 &\sum _{g=0}^{\infty }~\sum _{\beta \in H_{2}(M,\mathbb {Z} )}{\GW}_{g,\beta} q^{\beta }\lambda ^{2g-2} \\
 = & \sum _{g=0}^{\infty }~\sum _{k=1}^{\infty }~\sum _{\beta \in H_{2}(M,\mathbb {Z} )}{\GV}_{g,\beta}{\frac {1}{k}}\left(2\sin \left({\frac {k\lambda }{2}}\right)\right)^{2g-2}q^{k\beta } ,
\end{split}
\end{equation*}
which all viable mathematical definitions must satisfy. In this paper, we will use the above (invertible) relation as the definition of the Gopakumar--Vafa invariants (in terms of Gromov--Witten invariants). The integrality of this ad hoc definition was proven by E.~Ionel and T.~Parker \cite{Ionel_Parker_2018}.

The second is the \emph{quantum $K$-invariants} \cite{Givental_2000, Lee_2004}, a $K$-theoretic variant of the (cohomological) Gromov--Witten invariants. Whereas Gromov--Witten theory produces \emph{rational} enumerative invariants, quantum $K$-theory, counting (the alternating sum of) the rank of sheaf cohomology, produces \emph{integral} invariants by definition.
The close relationship between the quantum $K$-theory and quantum cohomology was understood since the early phase of quantum $K$-theory. In fact, many $K$-theoretic results were inspired by their cohomological counterparts. See, e.g., \cite{Givental_Lee_2003, Lee_2004}. In 
\cite{Givental_Tonita_2011} and subsequent works, A.~Givental and his collaborators completely characterized genus zero quantum $K$-invariants in terms of genus zero Gromov--Witten invariants. 

Therefore, there is a relation between quantum $K$-invariants and Gopakumar--Vafa invariants for Calabi--Yau threefolds via Gromov--Witten invariants. In \cite{Jockers_Mayr_2019} and  \cite[Conjecture~1.1]{Garoufalidis_Scheidegger_2022}, 
H.~Jockers, P.~Mayr and S.~Garoufalidis, E.~Scheidegger proposed a simple \emph{linear} relation between these two sets of integral invariants for \emph{quintic threefold} at genus zero. They also provided numerical evidence for this conjecture. 
The main purpose of this article is to prove this conjecture as stated in \cite[Conjecture~1.1]{Garoufalidis_Scheidegger_2022}, cf.\ Conjecture~\ref{conjectureJK} in Section~\ref{s:1}. According to \cite[Section 3.3]{Garoufalidis_Scheidegger_2022}, this also proves the version proposed by Jockers and Mayr.

We plan to generalize these results to other Calabi--Yau threefolds (Part II) and to higher genus (Part III) in future works.
In particular, some heuristic relations to the ``multiple cover formula'' will be discussed in Part II, where the proof of genus $0$ result for all Calabi--Yau threefolds will use different arguments.


\subsection{Contents of the paper}
Conjecture~\ref{conjectureJK} is phrased in terms of an explicit expression for the \emph{small $J^K$-function} in quantum $K$-theory, which is a generating function of $K$-theoretic Gromov-Witten invariants, or quantum $K$-invariants. For a smooth projective variety $X$, the quantum $K$-invariants are defined as
\[
\langle \tau_{d_1}(\Phi_1)\dots \tau_{d_n}(\Phi_n) \rangle^{X, {K}}_{g,n,\beta} :=  \chi \Big(  \M_{g,n}(X,\beta) ; \Big(  \otimes_{i=1}^n \ev_i^*(\Phi_i) L_i^{d_i}  \Big) \otimes \O^{\vir}  \Big) \in \mathbb{Z}.
\]
It gives a deformation of the ordinary $K$-ring $K^0(X)$ of $X$, analogous to the relation between quantum cohomology and ordinary cohomology. We consider the generating function, the \emph{big $J^K$-function}, which determines the \emph{genus $0$} quantum $K$-theory of $X$:
\[
\begin{split}
       t \rightarrow & J^{\KK}(t, q, Q) \\
       & := (1-q) + t(q) + \sum_{\alpha} \Phi_{\alpha}\sum_{n,d} \frac{Q^d}{n!} \langle \frac{\Phi^{\alpha}}{1-qL}, t(L),\dots,t(L) \rangle ^{X,\KK}_{0,n+1,d}.
\end{split}
\]
The \emph{small} $J$-function is a specialization of the big $J$-function to $t=0$. Jockers–Mayr and Garoufalidis–Scheidegger formulated an explicit expression of the small $J^K$-function in terms of the Gopakumar--Vafa invariants.
We recall the basic definitions and formulations of quantum $K$-theory and Gopakumar--Vafa invariants in Section~\ref{s:1}. The Conjecture~\ref{conjectureJK} is formulated in Section~\ref{section_conjecture}.

In Section~\ref{s:2}, we recall necessary formulations and results from the works of A.~Givental and his collaborators. There are generally two approaches to compute the genus zero quantum $K$-theory for the quintic threefold.
In \cite{Givental_Tonita_2011}, Givental and Tonita prove that genus 0 $K$-theoretic GW-invariants of $X$ can be expressed in terms of its cohomological ones. Their framework can be understood as the \emph{adelic characterization} of quantum $K$-theory. This is done by applying a virtual version of Kawasaki's Hirzebruch--Riemann--Roch formula, expressing Euler characteristics of vector bundles over a Deligne--Mumford \emph{stack} (moduli of stable maps in our case) in terms of integrations over the components of its inertia stack, called Kawasaki strata. 
By analyzing the combinatorial structure of inertia stacks of moduli of stable map, they derive a recursive relation relating invariants in different strata and then give the characterization of genus $0$ $K$-theoretic GW-invariants in terms of the cohomological ones. See Section \ref{section_KHRR_recursion} for details.

Another approach is called the \emph{explicit reconstruction theorem}, which in principal gives an algorithm to compute all quantum $K$-invariants from an initial condition. There are various versions of the reconstruction theorem.  A particular case of sufficient initial condition for the quintic threefold is given by Givental's $I^K$-function. See \cite{Lee_Pandharipande_2004},  \cite{Givental_PEV_Toric_2015} and \cite{Tonita_TwistedK_2018}. 

In principal, either the \emph{virtual orbifold Hirzebruch--Riemann--Roch} or the \emph{reconstruction theorem} gives an explicit algorithm to compute the $J^K$-function. 
In particular, a computation up to degree $7$ is given in \cite{Garoufalidis_Scheidegger_2022} using the explicit reconstruction alone.
However, the complexity for explicit computation grows very fast as the degrees increase. 
In Section~\ref{section_proof}, we combine both methods and utilize the \emph{analytic properties} of the generating functions to give a proof of Conjecture~\ref{conjectureJK}. 

To streamline the proof, some computations needed to establish the proof are left to Section~\ref{appen_Kawasaki}. In the appendix, an alternative proof of Theorem~\ref{Thm_deg0_inv} is given via the explicit reconstruction. This is to show that one can in principle prove a theorem using either reconstruction theorem or Riemann--Roch, when the complexity of the former is manageable.

\subsection*{Acknowledgements}
We wish to thank A.~Givental, R.~Pandharipande, E.~Scheidegger and H.-H.~Tseng for their interest and discussions. The results was presented in Bumsig Kim memorial conference (KIAS) and ETH (Z\"urich) in September and October 2022.
This research is partially supported by the Simons Foundation, the NSTC, Academia Sinica and University of Utah.

\section{GW, GV, QK and the JMGS Conjecture} \label{s:1}

\subsection{Gromov--Witten invariants}
Let $X$ be a smooth complex projective variety, and $\M_{g,n}(X,\beta)$ be the M.~Kontsevich’s moduli space of $n$-pointed, genus $g$, degree $\beta$ stable maps. 
Given $i\in \{ 1,\dots,n\}$, there is an evaluation map 
\[
\begin{split}
\ev_i : \M_{g,n}(X,\beta) & \rightarrow X
\\
[f: (C;x_1,\dots,x_n)] & \mapsto f(x_i),
\end{split}
\]
and a line bundle $L_i := x_i^* w_{\mathcal{C}/\M}$ on $\M_{g,n}(X,\beta)$, where $w_{\mathcal{C}/\M}$ is the relative dualizing sheaf of the universal curve $\mathcal{C} \rightarrow \M_{g,n}(X,\beta)$ and $x_i: \M_{g,n}(X,\beta) \rightarrow \mathcal{C}$ is the $i$-th mark point. 

(Cohomological) Gromov-Witten invariants of $X$ are defined to be
\[
\langle \tau_{d_1}(\phi_1)\dots \tau_{d_n}(\phi_n) \rangle^{X, {H}}_{g,n,\beta} :=  \pi^H_* \left( \cup_{i=1}^n \ev_i^*(\phi_i) c_1(L_i)^{d_i} \cap [\M_{g,n}(X,\beta)]^{\vir} \right) \in \mathbb{Q},
\]
where 
\[
 \pi: \M_{g,n}(X,\beta) \to pt := \op{Spec}(\mathbb{C})
\]
is the structural map and $\pi^H_*$ is the (cohomological) pushforward to the point.
Here $\phi_1,\dots,\phi_n \in H(X)$, $d_1,\dots,d_n \in \mathbb{Z}_{\geq 0}$, and $[\M_{g,n}(X,\beta)]^{\vir}$ are the (cohomological) virtual fundamental classes. 

In this paper, we will be concerned with only genus zero invariants, which can be encoded into a formal power series, called genus-0 descendant potential of $X$:
\[
F^{{H}}_0(t) = \sum_{n\geq 0}\sum_{\beta} \frac{Q^{\beta}}{n!} \langle  t(L),\dots,t(L)  \rangle^{X,{H}}_{0,n,\beta}.
\]
Here the sum is over all curve class $\beta \in H_2(X)_{\geq 0}$ and $Q^{\beta}$ are formal variables, called the Novikov variables, which keep track of the curve classes. $t(q)$ stands for any polynomial of one variable with coefficients in $H(X)$.
That is $\displaystyle t(q) = \sum_{k\in \mathbb{Z}_{\ge 0}} \sum_{\alpha=1}^N t_{k}^{\alpha} \phi_{\alpha} q^k$ with $\{\phi_{\alpha} \}_{\alpha=1}^N$ a basis in $H(X)$.

\subsection{Gopakumar--Vafa invariants}
In theoretical physics, R.~Gopakumar and C.~Vafa in \cite{Gopakumar_Vafa_1, Gopakumar_Vafa_2} introduced new topological invariants on \emph{Calabi--Yau threefolds} (CY3) $X$, called \emph{Gopakumar--Vafa invariants}.
These invariants represent the counts of ``numbers of BPS states'' on $X$. Unlike the Gromov--Witten invariants, which are defined for any symplectic manifolds or orbifolds, Gopakumar--Vafa invariants only make sense for Calabi--Yau threefolds (or variants).

The virtual dimensions for moduli spaces (of stable maps) to CY3 are always equal to the number of marked points. That is,
\[
 \op{vdim} \M_{g,n} (X, \beta) = n.
\]
By general properties of the moduli spaces, more precisely the string equation, the divisor equation and dilaton equation, all Gromov--Witten invariants can be easily reconstructed from
\[
 {\GW}_{g, \beta} := \pi^H_* \left( [\M_{g,0}(X,\beta)]^{\vir} \right) = \int_{[\M_{g,0}(X,\beta)]^{\vir} } 1 .
\]
That is, all $n$-pointed invariants with descendants can be recovered from $0$-pointed counting.
In fact, a \emph{closed formula} was obtained in \cite{Fan_Lee_2019} in terms of generating functions. We will therefore focus on ${\GW}_{g, \beta}$.

There have been various attempts at defining Gopakumar--Vafa invariants mathematically. We refer the readers to \cite{Maulik_Toda_2018} and references therein. Currently, the Gopakumar--Vafa invariants are defined for $0$-pointed curves only, without insertions. Generalizations to $n$-pointed invariants with insertion are expected to be compatible with the dilaton and divisor equations. In that case, the counting of the BPS states is again reduced to similarly defined ${\GV}_{g, \beta}$.

A remarkable relation stated in \cite{Gopakumar_Vafa_1, Gopakumar_Vafa_2} between GV and GW can be expressed in terms of generating functions:
\begin{equation} \label{e:1.1}
\begin{split}
 &\sum _{g=0}^{\infty }~\sum _{\beta \in H_{2}(M,\mathbb {Z} )}{\GW}_{g,\beta} q^{\beta }\lambda ^{2g-2} \\
 = & \sum _{g=0}^{\infty }~\sum _{k=1}^{\infty }~\sum _{\beta \in H_{2}(M,\mathbb {Z} )}{\GV}_{g,\beta}{\frac {1}{k}}\left(2\sin \left({\frac {k\lambda }{2}}\right)\right)^{2g-2}q^{k\beta } ,
\end{split}
\end{equation}

In this paper, we are mainly concerned with genus zero invariants on  $X$, the quintic CY3, and the above relation can be written as
\begin{equation} \label{e:1.2}
\begin{split}
    {\GW}_{g=0, \beta= d [\op{line}]} =: {\GW}_{d} & = \sum_{e|d} \frac{1}{e^3} {\GV}_{d/e}, \\ 
    {\GV}_{g=0, \beta= d [\op{line}]} =:     {\GV}_{d} & := \sum_{e|d} \frac{\mu(e)}{e^3} {\GW}_{d/e},
\end{split}
\end{equation}
where we have used the M\"obius inversion and $\mu(e)$ is the M\"obius function. For our purpose, we will \emph{use \eqref{e:1.2} as the definition of ${\GV}_{d}$}. The first 4 terms are listed for readers' convenience.
\begin{table}[ht]
\begin{tabular}{|l|l|l|l|l|}
\hline
$d$ & 1 & 2 & 3 & 4   \\ \hline
$\GW_d$ & 2875 & 4876875/8 & 8564575000/27 & 15517926796875/64  \\ \hline
$\GV_d$ & 2875  & 609250 & 317206375 & 242467530000   \\ \hline
\end{tabular}
\end{table}

This \emph{ad hoc} definition has among other things one difficulty. Namely, the Gopakumar--Vafa invariants are to be intrinsically \emph{integers}, while the Gromov--Witten invariants are generally \emph{rational} numbers, as the above table demonstrates. Fortunately, the integrality of this definition has been shown in \cite{Ionel_Parker_2018}.

There is a variant of the Gromov--Witten theory which also produces \emph{integral} invariants, namely, the \emph{quantum $K$-theory} \cite{Givental_2000, Lee_2004}.
This leads to the possibility of relating quantum $K$-invariants with Gopakumar--Vafa invariants for Calabi--Yau threefolds.

\subsection{Quantum $K$-invariants} \label{section_KGW_J}

The formulation of \emph{quantum $K$-theory} is similar to that of Gromov--Witten theory.
The $K$-theoretic Gromov-Witten invariants, or \emph{quantum $K$-invariants}, of $X$ are defined to be
\[
\langle \tau_{d_1}(\Phi_1)\dots \tau_{d_n}(\Phi_n) \rangle^{X, {K}}_{g,n,\beta} :=  \chi \Big(  \M_{g,n}(X,\beta) ; \Big(  \otimes_{i=1}^n \ev_i^*(\Phi_i) L_i^{d_i}  \Big) \otimes \O^{\vir}  \Big) \in \mathbb{Z}.
\]
Here $\Phi_1,\dots,\Phi_n \in K^0(X)$, $d_1,\dots,d_n \in \mathbb{Z}$, and $\O^{\vir}$ is the virtual structure sheaf on $\M_{g,n}(X,\beta)$ \cite{Lee_2004}. We encode all genus 0 invariants into a formal power series, called genus-0 descendant potential of $X$:
\[
F^{{K}}_0(t) = \sum_{n\geq 0}\sum_{\beta} \frac{Q^{\beta}}{n!} \langle  t(L),\dots,t(L)  \rangle^{X,{K}}_{0,n,\beta}.
\]
Here the sum is over all curve class $\beta \in H_2(X)_{\geq 0}$. 
$t(q)$ stands for any Laurent polynomial of one variable $q$ with coefficients in $K^0(X)$
\[
 t(q) = \sum_{k\in \mathbb{Z}} \sum_{\alpha=1}^N t_k^{\alpha} \Phi_{\alpha} q^k
\]
with $\{\Phi_{\alpha} \}_{\alpha=1}^N$ a basis in $K^0(X)$. We may rewrite $F^{{K}}_0(t)$ as a more transparent form as:
\[
F^{{K}}_0(t, Q) = \sum_{n\geq 0}\sum_{\beta}\frac{Q^{\beta}}{n!} \sum_{ \substack{  k_1,\dots,k_n\in \mathbb{Z} \\ \alpha_1,\dots,\alpha_n \in \{1,\dots,N \}  }  }t_{k_1}^{\alpha_1} \cdots t_{k_n}^{\alpha_n} \langle \tau_{k_1}(\Phi_{\alpha_1}), \dots, \tau_{k_n}( \Phi_{\alpha_n} ) \rangle^{X, {K}}_{0,n,\beta}.
\]
$K^0(X)$ here stands for the topological $K$-theory of complex vector bundles. For the quintic threefold, the main focus of this paper, Grothendieck groups of coherent sheaves and complex vector bundles coincide.

\subsection{JMGS conjecture for the quintic} \label{section_conjecture}

In \cite{Jockers_Mayr_2019} and \cite{Garoufalidis_Scheidegger_2022} H.~Jockers, P.~Mayr and S.~Garoufalidis, E.~Scheidegger formulate a conjectural relation between the Gopakumar--Vafa invariants and quantum $K$-invariants for the \emph{quintic threefold} $X$.
The conjecture is formulated in terms of \emph{small $J$-functions}, a generating function in quantum $K$-theory as well as quantum cohomology.

For simplicity, let us start with the small $J^H$-function for quantum cohomology \cite{Givental_1996}
\footnote{The peculiar evaluation at $0$ is a reminder that the small $J$-function is a restriction of the \emph{big $J$-function}. Cf.\ Definition~\ref{def:KbigJ}.}
\[
\begin{split}
J^H(t=0, z, Q) & := -z + \sum_{d \geq 1, \alpha} Q^d \langle \frac{\phi_{\alpha}}{-z-\psi} \rangle^H_{0,1,d} \phi^{\alpha} 
\\
&= -z + \sum_{d \geq 1} Q^d \Big( - \frac{d \GW_d}{5z} H^2 - \frac{2 \GW_d}{5z^2} H^3  \Big),
\end{split}
\]
where $H$ is the hyperplane class pulled back from $\PP^4$. 
It follows directly from the divisor and dilaton equations that
\[
\langle \rangle^H_{0,0,d}=: \GW_d  , \quad \langle \psi \rangle^H_{0,1,d} = -2 \GW_d, \quad \langle H \rangle^H_{0,1,d} = (\int_d H) \GW_d = d \cdot \GW_d.
\]

To phrase the conjecture in $K$-theory, recall that 
\[
K^0(X) = \frac{\mathbb{Q}[P]}{(1-P)^4},
\]
where $P= \O(-1)|_X$. 
We fix a basis $\{ \Phi_{\alpha} \}_{\alpha=0}^3= \{ (1-P)^{\alpha} \}_{\alpha=0}^3$ for $K^0(X)$. The inner product reads:
\[
(\Phi_a, \Phi_b)^{K} : = \chi (X, \Phi_a \Phi_b) 
= \left(
\begin{matrix}
0 & 5 & -5 & 5
\\
5 & -5 & 5 & 0
\\
-5 & 5 & 0 & 0
\\
5 & 0 & 0 & 0
\end{matrix}
\right).
\]
The dual basis $\{ \Phi^{\alpha} \}_{\alpha=0}^3$ is given by
\begin{align*}
    \Phi^0 &= \frac{1}{5}(1-P)^3  , &\quad\Phi^1 = \frac{1}{5}((1-P)^2 + (1-P)^3), \\
 \Phi^2 &= \frac{1}{5} ((1-P) + (1-P  )^2) , & \quad \Phi^3 = \frac{1}{5}(1+(1-P) - (1-P)^3).
\end{align*}

The small $J^K$-function for quantum $K$-theory is defined similarly. Jockers--Mayr and Garoufalidis--Scheidegger conjecture that $J^K$ can be expressed as a \emph{linear} combination of GV invariants.
\begin{conjecture} [{\cite{Jockers_Mayr_2019, Garoufalidis_Scheidegger_2022}} ]  \label{conjectureJK}
\[
\begin{split}
 & \frac{1}{1-q} \left[ J^{K}(0, q, Q)\right] := \frac{1}{1-q} \left[  (1 -q) +  \sum_{\alpha} \sum_{M \geq 1}  \Phi_{\alpha} \langle \frac{\Phi^{\alpha}}{1-qL} \rangle^K_{0,1,M} Q^M \right]
\\
 = & 1+ (1-P)^2 \sum_{d,r \geq 1} a(d,r,q^r) \GV_d Q^{dr} + (1-P)^3 \sum_{d,r \geq 1} b(d,r,q^r) \GV_d Q^{dr},
\end{split}
\]
where
\[
\begin{split}
    5 a(d,r,q) &= \frac{d(r-1)}{1-q} + \frac{d}{(1-q)^2},
    \\
    5 b(d,r,q) &= \frac{rd+r^2-d-1}{1-q} + \frac{d+3}{(1-q)^2} -\frac{2}{(1-q)^3}.
\end{split}
\]
\end{conjecture}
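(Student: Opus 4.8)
The plan is to compute the small $J^K$-function directly through the adelic characterization of Givental--Tonita \cite{Givental_Tonita_2011}, to reorganize the output according to the automorphism order of the domain curve, and then to collapse the resulting sums over Gromov--Witten invariants into Gopakumar--Vafa invariants by means of the M\"obius relation \eqref{e:1.2}. The starting point is the virtual Kawasaki--Hirzebruch--Riemann--Roch formula (recalled in Section~\ref{s:2}), which writes each invariant $\langle \Phi^\alpha/(1-qL)\rangle^K_{0,1,M}$ as a sum over the connected components of the inertia stack of $\M_{0,1}(X,M)$. The identity component contributes the \emph{fake} invariant, an ordinary integral $\int \ch(\cdots)\td(\cdots)$ over $\M_{0,1}(X,M)$; the remaining components are \emph{twisted sectors}, indexed by the order $r$ of a nontrivial automorphism of the domain, and are supported on loci of maps that factor as $r$-fold covers of degree-$d$ maps with $M=dr$. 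This combinatorial bookkeeping is precisely what produces the double sum $\sum_{d,r\ge 1}Q^{dr}$ and the $q^r$-dependence appearing in the conjecture.

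First I would treat the identity (fake) sector. Since the nonconstant part of the $J$-function lives in the codimension $\ge 2$ classes $(1-P)^2,(1-P)^3$, matching the $H^2,H^3$ components of the cohomological small $J^H$-function quoted above, the fake contribution can be extracted from the closed form of the quintic cohomological $J^H$-function obtained from Givental's $I$-function. Expanding $\ch(\O^{\vir}\otimes(1-qL)^{-1})$ as a Laurent series in $(1-q)$ and applying ordinary HRR produces, in each of the two components, coefficients that are linear in $\GW_d$, with poles in $(1-q)$ of order up to $3$; the order-$3$ pole in the $(1-P)^3$ component is the $K$-theoretic shadow of the cubic multiple-cover weight $e^{-3}$ characteristic of a Calabi--Yau threefold.

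The crux is the evaluation of the twisted sectors, which I expect to be the main obstacle. For each $r$ one must identify the $\mathbb{Z}/r$-fixed locus inside the inertia stack, recognize it (up to the expected gerbe and cover data) as a moduli space of degree-$d$ maps, and compute the inverse equivariant Euler class of its virtual normal bundle. After this computation the $r$-th twisted sector contributes a fixed rational function of $q^r$ times the degree-$d$ cohomological invariants, so that summing the fake and all twisted contributions over every factorization $M=dr$ yields, in each component, coefficients of the shape $\sum_{e\mid d}e^{-3}\GW_{d/e}$. Applying the M\"obius inversion \eqref{e:1.2} collapses these to $\GV_d$, and comparing the accumulated rational functions of $q$ against the stated $a(d,r,q^r)$ and $b(d,r,q^r)$ finishes the identification. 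The degree-$0$ piece, handled separately (Theorem~\ref{Thm_deg0_inv}), supplies the leading constant $1$ and fixes the normalization.

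The hard part, beyond the explicit Euler-class computation, is the \emph{analytic} closure of the argument: one must verify that both sides are rational in $q$ with poles supported only at roots of unity, and that equality of their principal parts at every such pole --- together with the agreement of finitely many low-degree coefficients, which can be cross-checked against the explicit reconstruction of \cite{Garoufalidis_Scheidegger_2022} --- forces the full functional identity. The detailed stratum-by-stratum Kawasaki computations underpinning these steps are deferred to Section~\ref{appen_Kawasaki}.
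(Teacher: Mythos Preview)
Your overall strategy --- apply virtual Kawasaki--Hirzebruch--Riemann--Roch, organize by the order $r$ of the domain automorphism, and collapse Gromov--Witten sums into Gopakumar--Vafa invariants via M\"obius inversion --- is indeed the backbone of the paper's argument. However, two substantive ingredients are missing from your plan, and there is one misreading.

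First, you underestimate the recursive structure of the twisted sectors. The stem integrals over $\M_{0,n+2}([X/\mathbb{Z}_r],d)$ (Proposition~\ref{Prop_Stem_inv}) carry insertions $\leg_\zeta(L)$ and $\delta_\zeta(L^{1/r})$, and these are \emph{defined in terms of $J^K$ itself} at lower Novikov degree. Your assertion that ``the $r$-th twisted sector contributes a fixed rational function of $q^r$ times the degree-$d$ cohomological invariants'' is therefore not a direct computation but a statement that must be established by induction on the Novikov variable: one assumes the conjectured form of $J^K$ through degree $M-1$, feeds it into the leg and tail inputs, and verifies the degree-$M$ output. This is how the paper proceeds (Theorem~\ref{Thm_rpower}, via Lemmas~\ref{lemma_statement1}--\ref{lemma_statement6}); without this induction the plan is circular. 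Second, you omit the reconstruction theorem entirely, but the paper uses it essentially. Theorem~\ref{t:2.12} is not used to compute invariants but to control the \emph{analytic shape} of $J^K(0)$: the degree/semi-valuation argument of Theorem~\ref{t:3.1} and Corollary~\ref{c:3.2}, applied to the reconstruction formula, shows that the partial-fraction expansion at each root of unity has a pole of order at most $4-i$ in the $(1-P)^i$ component. This a priori bound is what reduces the inductive HRR step to matching finitely many coefficients. Your ``analytic closure'' paragraph gestures toward this but provides no mechanism; Kawasaki alone (Remark~\ref{r:2.14}) locates the poles at roots of unity without bounding their orders.

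Finally, you misread Theorem~\ref{Thm_deg0_inv}: it does not supply ``the leading constant $1$'' (that is just the dilaton shift $(1-q)/(1-q)$). The ``degree $0$'' there refers to the semi-valuation $\deg_{q=\zeta^{-1}}$ of Section~\ref{s:3.2}, and the theorem computes the coefficients $a_{M,\zeta}$, $c_{M,\zeta}$, $f_{M,\zeta}$ at the bottom of that filtration --- these are genuine $Q^M$-contributions for $M\ge 1$, and they serve as the base layer of the induction rather than as a normalization.
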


The main purpose of this paper is to prove the above conjecture.

\section{Review of Givental's framework of quantum $K$-theory} \label{s:2}
In this section, relevant results from \cite{Givental_Tonita_2011, Givental_ER_Coh_K_2015, Givental_PEV_Toric_2015, Givental_PEVIII_ER} will be recalled.
In these works and many others, A.~Givental and his collaborators have established groundbreaking works on the structural understanding of quantum $K$-theory, including a characterization of the genus zero quantum $K$-theory in terms of quantum cohomology. This is essential for our proof as $\GV_d$ in this article are actually defined in terms of $\GW$ by \eqref{e:1.2}.


\subsection{The symplectic loop space formalism}
Let $\Lambda = \mathbb{Q}[\![Q]\!]$ be the Novikov ring.
Givental's loop space for quantum $K$-theory is defined as
\[
\K := K^0(X)(q) \otimes \Lambda.
\]
By definition, an element in $\K$ is a $Q$-series with coefficients rational function of $q$. 
\begin{remark} \label{Element_in_K}
Let $K :=K^0(X) \otimes \Lambda$. An element in $\K$ can also be understood as a rational function of $q$ with coefficients in $K$ in the $Q$-adic sense, i.e. modulo any power of the maximal ideal in the Novikov ring. 
\end{remark}

$\K$ is quipped with a symplectic form $\Omega$, i.e., the $\Lambda$-valued non-degenerate anti-symmetric bilinear form:
\[
\K \ni f,g \rightarrow \Omega(f,g) := \Big( \Res_{q=0} + \Res_{q=\infty} \Big) ( f(q), g(q^{-1}) )^{K} \frac{dq}{q},
\]
where $( \cdot, \cdot )^{K}$ denote the $K$-theoretic intersection pairing on $K$:
\[
(a,b)^{K} := \chi(X, a \otimes b) = \int_X \td (T_X) \ch(a) \ch(b).
\]
$\K$ admits the following Lagrangian polarization with respect to $\Omega$:
\[
\begin{split}
    \K &= \K_+ \oplus \K_-
    \\
    & := K[q,q^{-1}] \, \oplus \, \{ f(q) \in \K | f(0) \neq \infty ,\ f(\infty) =0 \}.
\end{split}
\]
\begin{definition}\label{def:KbigJ}
The big $J$-function of $X$ in the $K$-theory is defined as a map $\K_+ \rightarrow \K$:
\[
\begin{split}
   t \mapsto &J^{\KK}(t) := J^{\KK}(t, q, Q) \\
   &:= (1-q) + t(q) + \sum_{\alpha} \Phi_{\alpha}\sum_{n,d} \frac{Q^d}{n!} \langle \frac{\Phi^{\alpha}}{1-qL}, t(L),\dots,t(L) \rangle ^{X}_{0,n+1,d},
\end{split}
\]
where $\{ \Phi_{\alpha} \}$ and $\{\Phi^{\alpha}\}$ are Poincar\'e-dual basis of $K^0(X)$ with respect to $( \cdot, \cdot )^{K}$. 
\end{definition}

\begin{conventions}
In this series of papers, the variables $q$ and $Q$ in $J^{\KK}(t, q, Q)$  will stay in the background and the $J$-function is often denoted by $J^{\KK}(t)$.
\end{conventions}

\begin{remark} \label{J-function}
The definition of $J$-function contains three ingredients. The first summand, $1-q$, is called the \emph{dilaton shift}. The second one, $t(q)$, is call the \emph{input}. These two lie in $\K_+$.

We claim that the last term in Definition~\label{def:KbigJ}, a sum of correlators, lies in $\K_-$. What needs to be shown is that it is a rational function of $q$, with no pole at $q=0$ and with a zero at $q=\infty$. On each particular moduli stack,  there exists a polynomial $P(q)$ such that $P(L^{-1})=0$ and $P(0) \neq 0$ since $L^{-1}$ is invertible.
On such a moduli space, we have $P(q)-P(L^{-1}) = F(q,L) (L^{-1} -q)$ for some $F(q, L)$, which is a polynomial in $q$ with $\deg_q F < \deg_q P$. Thus, 
\[
\frac{1}{1-qL} = \frac{\frac{P(q) - P(L^{-1})}{P(q)}}{1-qL} = \frac{\frac{P(q) - P(L^{-1})}{1-qL}}{P(q)} = \frac{L^{-1} F(q,L)}{P(q)},
\] 
We conclude that each correlator is a rational function in $q$, has no pole at $q=0$, vanishes at $q=\infty$ and hence lies in $\K_-$. 

Note that this last term (in $\K_-$) actually has poles only at roots of unity, cf.\ Remark~\ref{r:2.14}.
\end{remark}

Let 
\[
\mathcal{L}^{K} := \{ ( 1-q + t(q), J^{K}(t)|_{\K_-} ) \in \K_+ | t(q) \in \K_+      \}
\]
be the graph of the big $J^K$-function in $\K$. We have
\begin{proposition} [{\cite[\S~2,\S~3]{Givental_Tonita_2011}}]
The Lagrangian submanifold $\mathcal{L}^{K}$ has the following properties:
\begin{enumerate}
    \item $\mathcal{L}^{K}$ is cone over $\Lambda$; 
    \item For $f\in \mathcal{L}^{K}$, we have $T_f \mathcal{L}^{K} \cap \mathcal{L}^K = (1-q)T_f \mathcal{L}^{K}$; Indeed, $\mathcal{L}^{K}$ is ruled by a finite-parameter family of such spaces.
    \item[(3)] Identifying $T^* \K_+$ with $\K = \K_+ \oplus \K_-$ as symplectic spaces, we have
    $$J^{K}(q) = (1-q) + t(q) + dF_0^{{K}}.$$
\end{enumerate}
\end{proposition}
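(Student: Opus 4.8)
The plan is to follow A.~Givental's template for the cohomological overruled Lagrangian cone, adapting each step to the $K$-theoretic setting. All three assertions belong to the standard genus-$0$ ``overruled cone'' package, and each is a consequence of a genus-$0$ universal relation for the correlators of Definition~\ref{def:KbigJ}: the string equation, the dilaton equation, and the topological recursion relations (TRR). These in turn encode the boundary geometry of $\M_{0,n}(X,\beta)$ together with the behaviour of the cotangent line bundles $L_i$ and of $\O^{\vir}$ under the forgetful and gluing morphisms. Granting these relations (established in \cite{Givental_Tonita_2011}), property~(3) is essentially a rewriting of the definitions, property~(1) follows from the dilaton equation, and property~(2) is the substantive point.

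For (3), I would unwind the identification $T^*\K_+ \cong \K_+ \oplus \K_- = \K$ furnished by $\Omega$, under which $\K_-$ is identified with the dual of $\K_+$ through the perfect pairing $\Omega|_{\K_+\times\K_-}$. Differentiating $F^{K}_0(t)$ in the coordinate direction dual to $\Phi_\alpha q^k$ lowers the number of $t$-inputs by one and inserts a single descendant $\tau_k(\Phi_\alpha)$; summing over $k$ and using the geometric expansion $\tfrac{1}{1-qL}=\sum_{k\ge 0}q^k L^k$ then matches powers of $q$ against the Darboux coordinates dual under $\Omega$, so that the $\K_-$-component of $J^{K}(t)$ is exactly $dF^{K}_0|_t$. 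The rationality and pole behaviour recorded in Remark~\ref{J-function} promote this from an identity of formal $q$-series to an identity in $\K_-$; since the $\K_+$-component is $(1-q)+t(q)$ by construction, this yields $J^{K}(t) = (1-q) + t(q) + dF^{K}_0$.

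For (1), I would deduce the cone property from the $K$-theoretic dilaton equation. After the dilaton shift $1-q$, the shifted potential is homogeneous of degree two in the shifted input $(1-q)+t(q)$, and the gradient of a degree-two function has a scale-invariant graph; hence $\mathcal{L}^{K}$ is invariant under multiplication by $\Lambda$, i.e.\ a cone over $\Lambda$. The only thing to verify is the dilaton identity itself, which comes from comparing $L_i$ on $\M_{0,n+1}(X,\beta)$ and $\M_{0,n}(X,\beta)$ under the forgetful morphism.

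The main obstacle is (2), the overruled structure. The target identities are $(1-q)\,T_f\mathcal{L}^{K}\subset T_f\mathcal{L}^{K}$, the fact that $(1-q)T_f\mathcal{L}^{K}$ lies entirely in $\mathcal{L}^{K}$ and sweeps it out as $f$ varies, and $T_f\mathcal{L}^{K}\cap\mathcal{L}^{K}=(1-q)T_f\mathcal{L}^{K}$. The plan is to show that the tangent space $T_f\mathcal{L}^{K}$ is a module over functions of $(1-q)$ and that translation in the $t_0$-direction (the string flow) moves along $\mathcal{L}^{K}$, the two together producing the ruling; the intersection identity then follows from the Lagrangian property of both $\mathcal{L}^{K}$ and its tangent spaces together with nondegeneracy of $\Omega$. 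I expect this to be the hard part precisely because the $K$-theoretic TRR and string/dilaton relations carry correction terms with no cohomological analogue: the class $c_1(L_i)$ is replaced by $1-L_i^{-1}$, and the comparison of $L_i$ under forgetting a point produces boundary corrections that interact with $\O^{\vir}$, so $1-qL$ does not transform as cleanly as its cohomological counterpart $-z-\psi$. Controlling these corrections --- which is the technical heart of \cite{Givental_Tonita_2011} --- is what makes the ``$(1-q)$-linearity'' of the tangent spaces hold on the nose; once it is in place, the remaining cone geometry is formally parallel to the cohomological argument.
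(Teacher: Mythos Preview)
The paper does not give its own proof of this proposition; it is stated purely as a citation to \cite[\S 2, \S 3]{Givental_Tonita_2011}, so there is no argument in the paper to compare against. Your sketch is in line with the approach of that reference: property~(3) is a formal unwinding of the symplectic identification $T^*\K_+\cong\K$, and the overruled cone structure in (1)--(2) is derived there from the $K$-theoretic string equation and the $K$-theoretic WDVV/TRR, exactly the universal relations you name. Your caveat that the $K$-theoretic comparison of $L_i$ under forgetful maps produces correction terms absent in cohomology, and that controlling these is the technical heart of the argument, is accurate and is precisely what \S\S 2--3 of Givental--Tonita are devoted to.
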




\subsection{Fake quantum $K$-theory}
The fake quantum $K$-invariants of $X$ are defined to be
\[
\langle \tau_{d_1}(a_1)\dots \tau_{d_n}(a_n) \rangle^{X, \fake}_{g,n,d} := \int_{[\M_{g,n}(X,d)]^{\vir} } \td(T_{\M_{g,n}(X,d)}^{\vir}) \ch ( \otimes_{i=1}^n \ev_i^*(a_i) L_i^{d_i} ),
\]
where $[\M_{g,n}(X,d)]^{\vir}$ is the (cohomological) virtual fundamental class and $[T^{\vir}]$ the virtual tangent bundle, the virtual difference of deformation and obstruction.

In the fake quantum $K$-theory, $L_i$ are unipotent. We expand $\displaystyle \frac{1}{1-qL}$ as follows:
\[
\frac{1}{1-qL} = \sum_{k \geq 0} \frac{q^k}{(1-q)^{k+1}}  (L-1)^k.
\]
This motivates the following definitions of symplectic loop space and $J_X^{\fake}$. Let 
\[
\K^{\fake} := \{ \text{$Q$ power series with coefficients Laurent series in $1-q$}   \} .
\]
%
$\K^{\fake}$ is endowed with the symplectic form $\Omega^{\fake}$
\[
\Omega^{\fake}(f,g) := - \Res_{q=1} (f(q), g(q^{-1}))^{K} \frac{dq}{q},
\]
and a corresponding Lagrangian polarization:
\[
\begin{split}
    \K^{\fake} &= \K_+^{\fake} \oplus \K_-^{\fake}
    \\
 : &= K[\![q-1]\!] \oplus {\rm span}_K \left\{ \frac{q^k}{(1-q)^{k+1}}  \right\}_{k \geq 0}.
\end{split}
\]
\begin{definition}
The big $J$-function of $X$ in the fake $K$-theory is defined as a map $\K_+^{\fake} \rightarrow \K^{\fake}$:
\[
t \rightarrow J^{\fake} (t) := (1-q) + t(q) + \sum_{\alpha} \Phi_{\alpha} \sum_{n,d} \frac{Q^d}{n!} 
\langle \frac{\Phi^{\alpha}}{1-qL}, t(L),\dots,t(L) \rangle ^{X,\fake}_{0,n+1,d}.
\]
\end{definition}
Same as in Remark \ref{J-function}, $J^{\fake}$ has dilation shift $(1-q)$, input $t(q)$, and a sum of correlator lying in $\K_-^{\fake}$. The graph of the $J^{\fake}$-function is a Lagrangian cone $\mathcal{L}^{\fake} \subset \K^{\fake}$, which is overruled in the sense that its tangent space $T$ are tangent to $\mathcal{L}^{\fake}$ along $(1-q)T$.

To describe the relation between fake $K$-theory and cohomological theory, 
take $H = H(X)[\![Q]\!]$, with Poincar\'e pairing $(a,b)^H = \int_X a\cdot b$. The Givental's loop space
\[
\mathcal{H} := H [\![ z, z^{-1}],
\]
is endowed with a symplectic form $\Omega^H$:
\[
\Omega (f,g) := \Res_{z=0} (f(-z), g(z) )^H dz,
\]
and a corresponding Lagrangian polarization:
\[
\begin{split}
    \mathcal{H} &= \mathcal{H}_+ \oplus \mathcal{H}_-
    \\
    & = H[\![z]\!] \oplus z^{-1} H[z^{-1}].
\end{split}
\]
\begin{definition}
The big $J$-function of $X$ in the cohomology theory is defined as a map $\mathcal{H}_+ \rightarrow \mathcal{H}$:
\[
t \rightarrow J^H(t) := -z + t(z) + \sum_{\alpha} \phi_{\alpha} \sum_{n,d} \frac{Q^d}{n!} 
\langle \frac{\phi^{\alpha}}{-z-\psi_1}, t(\psi_2),\dots,t(\psi_{n+1}) \rangle ^{X,H}_{0,n+1,d},
\]
where $\{ \phi_{\alpha} \}$ and $\{ \phi^{\alpha} \}$ are Poincar\'e dual basis in $H(X)$ with respect to $( \cdot, \cdot )^H$. We denote the graph of the big $J^H$-function as $\mathcal{L}^H$. 
\end{definition}


Introduce the quantum Chern character by
\[
\begin{split}
    {\rm qch}: \K^{\fake} &\rightarrow \mathcal{H}
    \\
    \sum_k f_k (q-1)^k &\rightarrow \sqrt{ \td(TX) } \sum_k \ch (f_k) (e^z-1)^k.
\end{split}
\]
It is symplectic, i.e. qch$^*\Omega^H = \Omega^{\fake}$.
\begin{theorem}[\cite{Coates_Givental_2007}] \label{thm_coh_fake}
Denote by $\Delta$ the Euler–Maclaurin asymptotic of the infinite product
\[
\Delta \sim \sqrt{ \td(T_X -1) } \prod_{m=1}^{\infty} \td ((T_X-1) \otimes L_z^{-m}),
\]
where $L_z$ is the universal line bundle with first chern class $z$. Then $\mathcal{L}^{\fake}$ can be obtained by $\mathcal{L}^H$ by the multiplication of $\Delta$:
\[
{\rm qch} (\mathcal{L}^{\fake}) = \Delta \mathcal{L}^H.
\]
\end{theorem}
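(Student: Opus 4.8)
The statement is the Quantum Riemann--Roch theorem of Coates--Givental specialized to the multiplicative Todd class, so the plan is to compute $\op{qch}(\mathcal{L}^{\fake})$ directly from the defining virtual Hirzebruch--Riemann--Roch formula and to identify the resulting correction with multiplication by $\Delta$. By definition every correlator entering $J^{\fake}$ is an integral against $[\M_{0,n}(X,d)]^{\vir}$ of a cohomology class twisted by $\td(T^{\vir}_{\M})$; after applying the symplectic isomorphism $\op{qch}$, the cone $\op{qch}(\mathcal{L}^{\fake})$ is therefore nothing but the cohomological cone $\mathcal{L}^H$ deformed by this one universal twisting class, and the entire content is to evaluate that twist.

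First I would decompose the virtual tangent sheaf. Writing $\pi:\mathcal{C}\to\M_{0,n}(X,d)$ for the universal curve and $f$ for the universal map, the map part of $T^{\vir}_{\M}$ has $K$-class $R\pi_*(f^*T_X)-R\pi_*\O_{\mathcal{C}} = R\pi_*f^*(T_X-1)$, using $f^*\O_X=\O_{\mathcal{C}}$; this is exactly the source of the $T_X-1$ in the statement. The remaining, domain-deformation contribution is already encoded cohomologically in $\mathcal{H}$ through the cotangent-line ($\psi$) classes, so the only genuinely new factor to analyze is $\td\big(R\pi_*f^*(T_X-1)\big)$.

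Second --- and this is the technical heart --- I would apply Grothendieck--Riemann--Roch to $\pi$ to rewrite $\ch\,\td\big(R\pi_*f^*(T_X-1)\big)$ as a fiber integral over $\mathcal{C}$ of $\ch\big(f^*(T_X-1)\big)$ against the Todd class of the relative dualizing sheaf of $\pi$. Near the marked points and nodes this relative sheaf is controlled by the line bundles $L_i$, whose Chern roots are the $\psi$-classes, which is precisely where the universal line bundle $L_z$ with $c_1(L_z)=z$ enters. Expanding the Todd factor produces an infinite sum indexed by $m\ge 0$, and the Euler--Maclaurin asymptotic expansion (Bernoulli numbers, equivalently $\zeta$-regularization) converts that sum into the asymptotic of the infinite product $\prod_{m\ge 1}\td\big((T_X-1)\otimes L_z^{-m}\big)$, the boundary term of the expansion supplying the prefactor $\sqrt{\td(T_X-1)}$. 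The main obstacle lies here: one must bookkeep the node contributions (where the two branches meet and contribute $\psi'+\psi''$) carefully enough to see that the gluing correlators recombine into exactly this single product, with no spurious factors, and control the asymptotics of $\log\td$ as a Bernoulli series.

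Finally I would check that multiplication by $\Delta$ is an $\Omega^H$-symplectic transformation, equivalently that $\log\Delta$ is an infinitesimal symplectic multiplication operator on $\mathcal{H}$, so that $\Delta\mathcal{L}^H$ is again a Lagrangian cone overruled by its tangent spaces. Since such a cone is determined by its genus-$0$ $J$-function data, it then suffices to match $\op{qch}(J^{\fake})$ with $\Delta J^H$, which is exactly what the GRR computation of the previous step delivers; this yields $\op{qch}(\mathcal{L}^{\fake})=\Delta\mathcal{L}^H$ and completes the argument.
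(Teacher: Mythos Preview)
Your overall strategy---recognizing $\op{qch}(\mathcal{L}^{\fake})$ as the cohomological cone twisted by $\td(T^{\vir}_{\M})$ and then evaluating that twist---is the right one, and it matches the Coates--Givental approach the paper cites (and sketches in the material following the bibliography). The gap is in how you handle the ``domain-deformation'' part of $T^{\vir}$.

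The virtual tangent bundle splits into \emph{three} pieces, not two:
\[
T^{\vir} \;=\; \pi^K_*\ev^*(T_X-1) \;+\; \pi^K_*(1-L_{n+1}^{-1}) \;-\; \big(\pi^K_* i_*\O_{\mathcal Z}\big)^{\vee},
\]
the target piece, the dilaton piece, and the nodal piece. You correctly isolate the first as the source of $\Delta$ via the Euler--Maclaurin expansion. But the remaining two are \emph{not} ``already encoded cohomologically through the $\psi$-classes''; each produces its own nontrivial twist. In the paper's sketch, the dilaton piece $\pi^K_*(1-L^{-1})$ shifts the dilaton from $-z$ to $1-e^{z}=1-q$, and the nodal piece $-(\pi^K_* i_*\O_{\mathcal Z})^{\vee}$ changes the polarization from $\mathcal H_{\pm}$ to the fake-$K$ polarization $\K^{\fake}_{\pm}$ (via the expansion $\frac{1}{1-e^{\psi+z}}=\sum_{k\ge 0}(e^{\psi}-1)^k\frac{e^{kz}}{(1-e^z)^{k+1}}$). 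These two effects are precisely what is needed for the structures of $\K^{\fake}$ and $\mathcal H$ to match under $\op{qch}$; without tracking them you cannot conclude that the residual discrepancy is \emph{only} multiplication by $\Delta$. Your remark about ``bookkeeping the node contributions'' shows you sense the issue, but you fold it into the $\Delta$ computation rather than recognizing it as the polarization change---so as written the argument would not close.
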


\subsection{$J^K$-function as a graph sum via Kawasaki's HRR} \label{section_KHRR_recursion}
Given a nonsingular projective variety $M$ with a locally free sheaf $E$, the Hirzebruch-Riemann-Roch formula expresses the Euler characteristic in terms of a cohomological integration:
\[
\chi(M;E) := \sum_{i = 0}^{\dim M} (-1)^i h^i(M;E) = \int_M \td(TM) \cdot \ch(E).
\]
A generalization to smooth Deligne-Mumford stacks $\mathcal{M}$ 
was first established by T.~Kawasaki \cite{Kawasaki_1979} in the analytic setting. 

Let $I\mathcal{M} = \sqcup_i \mathcal{M}_i$ be the inertia stack of $\mathcal{M}$, with $\mathcal{M}_i$ connected components. Following Givental, we refer to them as \emph{Kawasaki strata}. Kawasaki's formula reads
\[
\chi(\mathcal{M}, E) = \sum_{i} \frac{1}{m_i} \int_{\mathcal{M}_i} \td(T_{\mathcal{M}_i}) \ch \Big(  \frac{ \Tr (E|_{\mathcal{M}_i}) }{ \Tr ( \Lambda^* N_{\mathcal{M}_i}^* ) } \Big),
\]
where the sum over $i$ runs through all connected components.

As in \cite{Givental_Tonita_2011}, a (virtual) version of the above formula, valid for quasi-smooth DM stacks, will be used to compute the (big) $J$-function. Kawasaki's strata are visualized in Figure \ref{Kawasaki_Strata} as in \cite[\S~7]{Givental_Tonita_2011}.

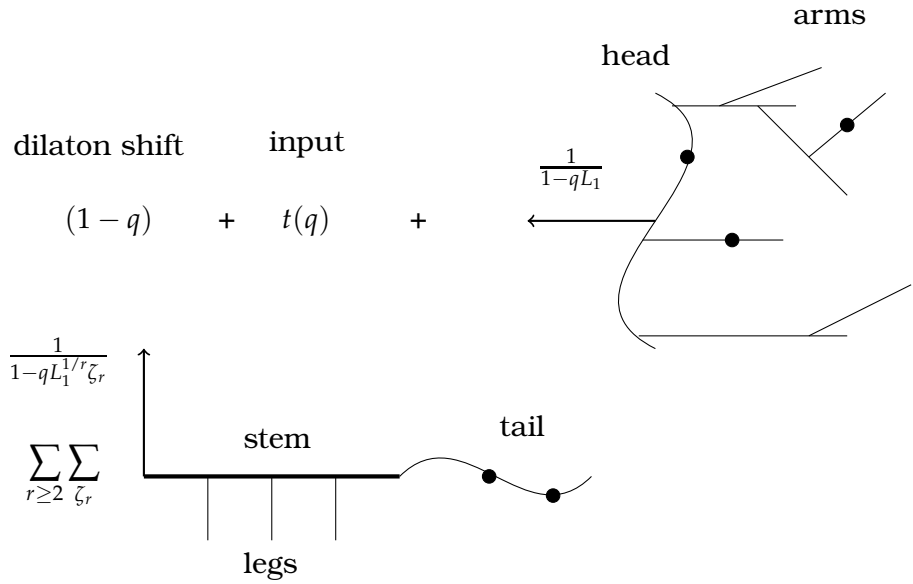
\begin{figure}[ht]
\centering
\begin{tikzpicture}[scale=1.7]
\filldraw[black] (0.9,-1.4) node[anchor=west]{dilaton shift};
\filldraw[black] (2.9,-1.4) node[anchor=west]{input};
\filldraw[black] (5.5,-0.7) node[anchor=west]{head};
\filldraw[black] (7,-0.4) node[anchor=west]{arms};

\filldraw[black] (1.3,-2) node[anchor=west]{$(1-q)$};
\filldraw[black] (2.5,-2) node[anchor=west]{+};
\filldraw[black] (3,-2) node[anchor=west]{$t(q)$};
\filldraw[black] (4,-2) node[anchor=west]{+};

\filldraw[black] (5,-1.6) node[anchor=west]{$\frac{1}{1-qL_1}$};
\draw [thick ,-to] (6,-2) -- (5,-2);

\draw (6,-1) .. controls (7,-1.5) and (5,-2.5) .. (6,-3);
\draw (6.13,-1.1) -- (7.1,-1.1);
\draw (6.5, -1.1) -- (7.3, -0.8);
\draw (7.5,-1.8) -- (6.8,-1.1);
\draw (7.2, -1.5) -- (7.8, -1);
\draw (5.9,-2.15) -- (7,-2.15);
\draw (5.87, -2.9) -- (7.5, -2.9);
\draw (8, -2.5) -- (7.2, -2.9);

\filldraw[black] (6.25,-1.5) circle (1.5pt);
\filldraw[black] (6.6,-2.15) circle (1.5pt);
\filldraw[black] (7.5,-1.25) circle (1.5pt);

\filldraw[black] (1,-4) node[anchor=west]{$\displaystyle\sum_{r\geq 2}\sum_{\zeta_r}$};
\filldraw[black] (0.85,-3.1) node[anchor=west]{$\frac{1}{1-qL_1^{1/r}\zeta_r} $};

\filldraw[black] (2.7,-3.7) node[anchor=west]{stem};
\draw[ultra thick] (2,-4) -- (4,-4);
\draw[thick, -to] (2,-4) -- (2,-3);
\draw (2.5,-4) -- (2.5,-4.5);
\draw (3,-4) -- (3,-4.5);
\draw (3.5,-4) -- (3.5,-4.5);
\filldraw[black] (2.7,-4.7) node[anchor=west]{legs};

\draw (4,-4) .. controls (4.5,-3.5) and (5,-4.5) .. (5.5,-4);
\filldraw[black] (4.7,-3.6) node[anchor=west]{tail};
\filldraw[black] (4.7,-4) circle (1.5pt);
\filldraw[black] (5.2,-4.15) circle (1.5pt);
\end{tikzpicture}
\caption{Kawasaki strata}
\label{Kawasaki_Strata}
\end{figure}

Notations of Figure \ref{Kawasaki_Strata} are explained below. A point in the inertia stack is represented by a stable map with symmetry, which we denote by $g$. 
\begin{itemize}
    \item The black points denote the marked points. 
    \item Given a stable map with symmetry $g$, the action of $g$ fixes the marked points and acts on $L_1$ with an eigenvalue, which is denote by $\zeta$. The strata with $\zeta=1$ are separated from those where $\zeta_r \neq 1$ in the Figure \ref{Kawasaki_Strata}. $\displaystyle \sum_{\zeta_r}$ denotes the sum over all primitive $r$-th roots of unity.
    \item When $\zeta=1$, the \emph{head} is a stable map with its source curve the maximal connected subcurve
    containing the first marked points where $g$ is trivial. The head has marked points which are either the marked points of the original curve or nodal points attached to arms.
    \item An \emph{arm} is a stable map whose source curve is obtained as a connected component of the original curve when the head is removed. It has its own ``first marked point'', the nodal point where it is attached to the head. 
    
    By definition, an arm can have any symmetry satisfying the following condition: the eigenvalue of the symmetry at its ``first marked point'' $\neq 1$, or else it is part of the head.
    
    \item When $\zeta=\zeta_r \neq 1$, we consider the stem curve defined in the next paragraph. It is a connected subcurve, containing the first marked point with $g$ as a symmetry of order $r$. The quotient stable map is called the \emph{stem}. 
    
    The stem curve contains the maximal chain of $\PP^1$ containing the \emph{first marked point} and with the same action of the symmetry on each of them. In other words, a node connects two $\PP^1$'s should be balanced, i.e., the eigenvalues of $g$ on the two branches are inverse. Furthermore, it can have other irreducible components attached on the “side” of the chain permuted by $g$. The opposite end of the maximal chain of $\PP^1$'s is either a node or a marked point. In either case, we refer it as the \emph{``last marked point''}.
    
    The stem carries two kinds of marked points:
    \begin{enumerate}
        \item Ramified/stacky points at the first and last marked points. The input at the first marked point is ${\displaystyle (1-qL_1^{1/r} \zeta_r)^{-1}}$.
        \item Unramified/smooth marked points coming from symmetric configurations of $r$-tuples of nodes on the cover. (These nodes connect to legs; see below.)
    \end{enumerate}



    \item A \emph{tail} is a stable map whose source curve is obtained as \emph{the} connected component of the original curve after removing the stem curve and is attached the last marked point. 
    Its ``first marked point'', the nodal point where it is attached to the stem, is fixed by the $g$ action with the eigenvalue $\neq \zeta$, (or else it is part of the stem.)
    
    \item \emph{Legs} are connected components after removing the stem curve and the tail. They are permuted by the action of $g$.
\end{itemize}

The big $J$-function can be rewritten as
\[
\begin{split}
J^{\KK}(t) &=(1-q) + t(q) + \sum_{n,d,\alpha} \frac{Q^d \Phi_{\alpha}}{n!}\langle \frac{\Phi^{\alpha}}{1-qL}, t(L),\dots,t(L)  \rangle^{X,\KK}_{0,n+1,d}
\\
&= (1-q) + t(q) + \sum_{n,d,\alpha} \frac{Q^d \Phi_{\alpha}}{n!}\sum_{\zeta} \langle \frac{\Phi^{\alpha}}{1-qL}, t(L),\dots,t(L)  \rangle^{X_{\zeta}}_{0,n+1,d},
\end{split}
\]
where $\sum_{\zeta}$ runs through all roots of unity (including 1), $X_{\zeta}$ is the  Kawasaki strata where $g$ acts on $L_1$ with eigenvalue $\zeta$, and $\langle \rangle^{X_{\zeta}}$ denotes the contribution of $X_{\zeta}$ in the Kawasaki's formula. In other words, $\langle \rangle^{X,\KK}$ represent (true) quantum $K$-invariants while $\langle \rangle^{X_{\zeta}}$ stand for (collections of) \emph{cohomological} invariants. Explicit formulas will be reorganized in Proposition~\ref{Prop_Stem_inv}.

Let arm$(L)$, leg$_{\zeta}(L)$, and tail$_{\zeta}(L)$ denote the totalities of the contributions from arm, leg, and tail respectively. They can be computed as follows:
\[
\begin{split}
    {\rm arm}(q) &= \sum_{n,d \neq 0,\alpha} \frac{Q^d \Phi_{\alpha}}{n!}\sum_{\zeta' \neq 1} \langle \frac{\Phi^{\alpha}}{1-qL}, t(L),\dots,t(L)  \rangle^{X_{\zeta'}}_{0,n+1,d} 
    \\
    {\rm leg}_{\zeta}(q) &= \Psi^r ({\rm arm}(q)_{t=0} )
    \\
    {\rm tail}_{\zeta}(q) &= \sum_{n,d \neq 0,\alpha} \frac{Q^d \Phi_{\alpha}}{n!}\sum_{\zeta' \neq \zeta} \langle \frac{\Phi^{\alpha}}{1-qL}, t(L),\dots,t(L)  \rangle^{X_{\zeta'}}_{0,n+1,d} ,
\end{split}
\]
where $\zeta$ is the primitive $r$-th roots of unity, $\zeta'$ in the above sums are arbitrary roots of unity and
$\Psi^r$ are the \emph{Adams operations}. Recall that Adams operations are additive and multiplicative endomorphisms of $K$-theory or more generally $\lambda$-rings, acting on line bundles by $\Psi^r(L) = L^{\otimes r}$. Here $\Psi^r$ also act on the Novikov variables by $\Psi^r(Q^d) = Q^{rd}$. In the definition of leg$_{\zeta}(q)$, $t=0$ is imposed because legs will be permuted by $g$, an automorphism, which excludes the appearance of marked points.

The following propositions (\ref{prop_head_fake}, \ref{Prop_Stem_inv}) justify the above definitions.

\begin{proposition}[Essentially {\cite[Proposition~1]{Givental_Tonita_2011}}]\label{prop_head_fake}
\[
J^{\KK}(t)|_{q=1} = J^{\fake}(t + {\rm arm}),
\]
where $(\cdot )|_{q=1}$ is the power series expansion at $q=1$ (of the rational function). 
\end{proposition}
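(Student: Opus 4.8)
The plan is to evaluate $J^{\KK}(t)$ by the virtual Kawasaki--Hirzebruch--Riemann--Roch formula, organize the inertia stack by the eigenvalue $\zeta$ of the symmetry $g$ on the cotangent line $L_1$ at the first marked point, and then match the result with $J^{\fake}(t+{\rm arm})$ after expanding at $q=1$. Splitting the correlator sum as $\sum_\zeta=\sum_{\zeta=1}+\sum_{\zeta\neq1}$, the $\zeta\neq1$ summand is by construction exactly ${\rm arm}(q)$, so it remains to show that the dilaton shift, the input, and the $\zeta=1$ correlators together reproduce $J^{\fake}$ with input $t+{\rm arm}$.

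First I would treat a single $\zeta=1$ stratum, whose fixed locus is a head moduli space decorated with arm data. Over the head, where $g$ acts trivially, the trace on head-supported bundles is the identity and there are no normal directions among the head's own deformations, so the head factor of the Kawasaki integrand collapses to the fake integrand $\td(T^{\vir})\,\ch(\,\cdot\,)$ over $\M_{0,m+1}(X,d)$. The only surviving normal directions are the smoothings of the head--arm nodes together with the arm deformations, which are precisely the data resummed below. Since $L_1$ is unipotent on the head, $\tfrac{1}{1-qL_1}$ is singular at $q=1$; expanding it there by $\tfrac{1}{1-qL}=\sum_{k\ge0}\tfrac{q^k}{(1-q)^{k+1}}(L-1)^k$ recasts the first insertion as the fake first insertion in $\K^{\fake}_-$. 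This is the role of the specialization $(\,\cdot\,)|_{q=1}$.

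The decisive step is to resum the arms. Excising the head cuts the source curve along a set of nodes, and the component glued at each node is an arm, a stable map whose symmetry has eigenvalue $\neq1$ on its own first cotangent line. Summing the Kawasaki contributions over all arms attachable at a fixed node is a splitting computation: the node smoothing contributes a line-bundle factor to $\Lambda^*N^*$ which, after inserting the completeness relation $\sum_\alpha\Phi_\alpha\otimes\Phi^\alpha$ at the node, reassembles into the propagator $\tfrac{1}{1-qL}$ of the definition of ${\rm arm}(q)$; the total arm contribution glued at a head node is therefore ${\rm arm}(L)$ evaluated at the head-side cotangent line. Since the head already carries genuine marked points with input $t(L)$, and the factor $1/n!$ symmetrizes over the union of marked points and arm-nodes, summing over the number of arms promotes every non-first input from $t(L)$ to $(t+{\rm arm})(L)$.

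I expect this resummation to be the main obstacle: one must verify that the virtual normal bundle of the head inside the Kawasaki stratum factors as a product over the nodes of node-smoothing line bundles, so that the arm series reattaches with exactly the $\tfrac{1}{1-qL}$ propagator and no spurious automorphism factors --- it is this bookkeeping that forces the definition ${\rm arm}(q)=\sum_{\zeta'\neq1}\langle\cdots\rangle$. Granting it, assembling the three ingredients gives
\[
J^{\KK}(t)\big|_{q=1}=(1-q)+t(q)+{\rm arm}(q)+\sum_\alpha\Phi_\alpha\sum_{n,d}\frac{Q^d}{n!}\Big\langle \tfrac{\Phi^\alpha}{1-qL},(t+{\rm arm})(L),\dots,(t+{\rm arm})(L)\Big\rangle^{X,\fake}_{0,n+1,d},
\]
and since $t(q)+{\rm arm}(q)=(t+{\rm arm})(q)$ is exactly the dilaton-shifted input of the fake theory, the right-hand side is $J^{\fake}(t+{\rm arm})$, as claimed.
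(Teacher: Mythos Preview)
The paper does not supply its own proof of this proposition; it is stated with attribution to \cite[Proposition~1]{Givental_Tonita_2011} and no argument is given. So there is no in-paper proof to compare against.

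Your outline is the correct one and matches the Givental--Tonita argument that the paper is citing: apply virtual Kawasaki--HRR, stratify by the eigenvalue $\zeta$ on $L_1$, identify the $\zeta\neq 1$ part of the correlator sum as ${\rm arm}(q)$ by definition, and analyze the $\zeta=1$ strata as heads with arms attached. The key resummation step---that the arm contributions at head--arm nodes assemble into ${\rm arm}(L)$ via the node-smoothing factor and the diagonal $\sum_\alpha \Phi_\alpha\otimes\Phi^\alpha$---is exactly the content of the cited proposition, and you correctly flag it as the place where the actual work sits. Your final displayed identity is the right endpoint.

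One point you leave implicit and that deserves a sentence in an actual proof: the original marked points $x_2,\dots,x_{n+1}$ (carrying inputs $t(L)$) may lie on arms as well as on the head. The combinatorics of distributing them, together with the $1/n!$ symmetrization, is what makes the head-side inputs become $(t+{\rm arm})(L)$ rather than merely $t(L)$ plus a separate arm term; this is consistent with the definition of ${\rm arm}(q)$ as itself a sum over $n\ge 0$ insertions of $t(L)$. You allude to this but do not spell it out. Otherwise the proposal is sound.
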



For $\zeta\neq 1$ a primitive $r$-th roots of unity, the stem space is isomorphic to the moduli space
\[
 \M_{0,n+2,d}^X(\zeta) := \M_{0,n+2} \left( [X/\mathbb{Z}_r], d; (g,1,\dots,1,g^{-1}) \right) .
\]
Here the group elements $g, 1$ signal the twisted sectors in which the marked points lie. To simplify the notation, we introduce the notation
\begin{equation} \label{e:kawasaki}
\begin{split}
    &\Big[ T_1(L), T(L),\dots, T(L), T_{n+2}(L) \Big]^{X_{\zeta}}_{0,n+2,d}
    \\
 := &\int_{[\M_{0,n+2,d}^X(\zeta)]^{\vir}} \td( T_{\M} ) \ch \left( 
    \frac{ \ev_1^* (T_1(L)) \ev_{n+2}^* (T_{n+2}(L)) \prod_{i=2}^{n+1} \ev_i^* T(L) }
    {\Tr (\Lambda^* N^*_{\M})}  \right),
\end{split}
\end{equation}
where $[\M_{0,n+2,d}^X(\zeta)]^{\vir}$ is the virtual fundamental class, $T_{\M}$ is the (virtual) tangent bundle to $\M_{0,n+2,d}^X(\zeta)$, and $N_{\M}$ is the (virtual) normal bundle of $\M_{0,n+2,d}^X(\zeta)$ considered as Kawasaki strata in $\M_{0,nr+2}(X,rd)$. 

\begin{proposition} [{ \cite[\S7]{Givental_Tonita_2011} }] \label{Prop_Stem_inv}
Let $\zeta$ be a primitive $r$-th roots of unity.
We have
\[
\begin{split}
    & \sum_{n,d,\alpha}  \frac{Q^d \Phi_{\alpha}}{n!}\langle \frac{\Phi^{\alpha}}{1-qL}, t(L),\dots,t(L)  \rangle^{X_{\zeta}}_{0,n+1,d} 
    \\
   = & \sum_{n,d,\alpha} \frac{Q^{rd}\Phi_{\alpha}}{n!} \Big[ \frac{\Phi^{\alpha}}{1-q\zeta L^{1/r}}, {\rm leg}_{\zeta}(L),\dots, {\rm leg}_{\zeta}(L),  \delta_{\zeta}(L^{1/r}) \Big]^{X_{\zeta}}_{0,n+2,d},
\end{split}
\]
where 
\[
\delta_{\zeta}(q) = (1- \zeta^{-1} q) + t(\zeta^{-1} q) + {\rm tail}_{\zeta}(\zeta^{-1} q).
\]
\end{proposition}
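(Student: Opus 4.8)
The plan is to follow the stratification of the inertia stack already described above (Figure~\ref{Kawasaki_Strata} and the accompanying bullet points) and to evaluate the Kawasaki contribution $\langle\,\cdot\,\rangle^{X_\zeta}$ stratum by stratum, matching it against the integral over the stem moduli space $\M^X_{0,n+2,d}(\zeta)$. Concretely, a point of $X_\zeta$ is a stable map with a symmetry $g$ acting on $L_1$ by a primitive $r$-th root of unity $\zeta$; its source curve splits canonically into (i) a \emph{stem}, the maximal balanced chain of $\PP^1$'s through the first marked point on which $g$ has order $r$, (ii) \emph{legs}, components attached along the side of the stem and freely permuted by $g$ in $r$-tuples, and (iii) a \emph{tail} attached at the last marked point, where $g$ acts with eigenvalue $\neq\zeta$. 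Passing to the quotient by $\langle g\rangle\cong\mathbb{Z}_r$ identifies the stem with the twisted stable-map space $\M_{0,n+2}([X/\mathbb{Z}_r],d;(g,1,\dots,1,g^{-1}))$: the two ramified/stacky marked points carry the sectors $g$ and $g^{-1}$ (the first and last marked points), while the $n$ unramified points record where the $r$-tuples of legs descend. Since an orbifold degree $d$ downstairs lifts to degree $rd$ on the cover, the Novikov weight for the stem is $Q^{rd}$, which is exactly the weight appearing on the right-hand side.

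The first step of the computation proper is to rewrite the Kawasaki integrand $\td(T_{\mathcal M})\ch(\Tr(E)/\Tr(\Lambda^* N^*))$ in terms of data on $\M^X_{0,n+2,d}(\zeta)$. I would split the virtual normal bundle $N_{\mathcal M}$ of the stratum into the pieces coming from smoothing the balanced nodes inside the stem, smoothing the nodes where legs and the tail are attached, and deforming the legs themselves, and then distribute the factor $\Tr(\Lambda^* N^*)^{-1}$ accordingly; this is exactly the bookkeeping that the bracket notation \eqref{e:kawasaki} is designed to absorb. Two root-of-unity features must be tracked here. First, the cotangent line $L_1$ upstairs and the orbifold cotangent line $L$ downstairs at the first stacky point are related through the $r$-fold ramification, so the insertion $\Phi^\alpha/(1-qL)$ becomes $\Phi^\alpha/(1-q\zeta L^{1/r})$. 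Second, at the last stacky point the symmetry acts with the inverse eigenvalue $\zeta^{-1}$, which forces the change of variable $q\mapsto\zeta^{-1}q$ in the insertion there.

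Next I would account for the legs and for the tail. Because the legs carry no marked points and are cyclically permuted by $g$, the $g$-equivariant (trace) contribution of an $r$-tuple of identical legs is computed by the trace of a cyclic permutation on an $r$-fold tensor power; the standard identity for such traces converts this into the Adams operation $\Psi^r$ applied to the single-leg contribution. Since a single leg is nothing but an arm with $t=0$, this yields ${\rm leg}_\zeta(L)=\Psi^r({\rm arm}(L)_{t=0})$, with $\Psi^r$ simultaneously sending $Q^{d'}\mapsto Q^{rd'}$ so that the total degree is correctly reassembled. The tail, attached at the last marked point with eigenvalue $\neq\zeta$, contributes ${\rm tail}_\zeta$; combining it with the dilaton shift $(1-q)$ and the input $t(q)$ and applying the substitution $q\mapsto\zeta^{-1}q$ from the previous paragraph packages all the data at the last marked point into $\delta_\zeta(L^{1/r})=(1-\zeta^{-1}L^{1/r})+t(\zeta^{-1}L^{1/r})+{\rm tail}_\zeta(\zeta^{-1}L^{1/r})$. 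Assembling the stem integral with the insertion $\Phi^\alpha/(1-q\zeta L^{1/r})$ at the first point, ${\rm leg}_\zeta(L)$ at the $n$ middle points, and $\delta_\zeta(L^{1/r})$ at the last point, and summing over $n$ and $d$, then reproduces the right-hand side.

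The hard part will be the virtual-normal-bundle analysis in the second step, and in particular two derivations hidden inside it: the passage from the cyclic-permutation trace over the $r$-tuples of legs to the Adams operation $\Psi^r$, and the exact identification of the factors $\zeta$ and $L^{1/r}$ produced by smoothing the balanced nodes and by the ramification at the two stacky points. These are precisely the computations carried out in \cite[\S7]{Givental_Tonita_2011}; the role of the present proof is to verify that the stratification and the resulting factors combine into the stated closed form, with the shift $q\mapsto\zeta^{-1}q$ in $\delta_\zeta$ and the substitution $qL\mapsto q\zeta L^{1/r}$ in the first insertion being the two points that most require care.
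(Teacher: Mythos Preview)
Your proposal is correct and follows essentially the same approach as the paper's sketch: both analyze the Kawasaki stratum via the stem/leg/tail decomposition, derive the factor $1/(1-q\zeta L^{1/r})$ from the ramified cotangent-line comparison at the first marked point, obtain the leg contribution from the cyclic-trace/Adams identity (stated as a lemma in the paper), and assemble $\delta_\zeta$ from the three geometric possibilities at the last marked point. The only place where the paper is slightly more explicit is in spelling out that the dilaton-shift term $(1-\zeta^{-1}L^{1/r})$ arises specifically when the last marked point is a \emph{regular} point on the cover (via infinitesimal translation), rather than being ``combined'' formally with the input and tail as you phrase it.
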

\begin{proof}[Sketch of Proof]
The statement is scattered in \cite[\S7]{Givental_Tonita_2011}. A sketch of the proof is given below.

The factor $\displaystyle\frac{1}{1-q\zeta L^{1/r}}$, coming from the ``ramified'' first marked point on a stem curve, with $\zeta$ the eigenvalue of the $g$-action. The power $1/r$ on $L$ comes from the comparison of the cotangent line between the marked point on the stem and its $r$-fold cover.

The contributions at unramified marked points can be written in terms of legs, where the following lemma has been used.

\begin{lemma} [{\cite[\S 7 Lemma]{Givental_Tonita_2011} }] \label{lemma_Adams_cyclic}
Let $V$ be a vector bundle, and $g$ the automorphism of $V^{\otimes r}$ acting by the cyclic permutation of the factors. Then
\[
\Tr (g| V^{\otimes r}) = \Psi^r(V). 
\]
\end{lemma}

The input $\delta(q)$ in the last marked point contains three parts. They come from the following cases:
\begin{itemize}
    \item If the last marked point is a regular point on the covering curve, the infinitesimal translation of that point gives the factor $1-\zeta^{-1}L^{1/r}$.
    \item If the last marked point is also a marked point on the covering curve, the input $t(\zeta^{-1} L^{1/r})$ is kept. 
    \item If the last marked point is the node with a tail attached on the cover, the contribution is tail$_{\zeta} (\zeta^{-1} L^{1/r}) $, since the tail can be any stable map with eigenvalue at the first marking $\neq \zeta^{-1}$. This is exactly the definition of tail$_{\zeta}(q)$
\end{itemize}
As for the first marked point, the nontrivial action of $g$ creates the $r$-the roots $\zeta$ and $L^{1/r}$.
This finish the proof.
\end{proof}

\begin{remark} \label{r:2.14}
Since $L^{1/r}$ is unipotent, 
\[
\frac{1}{1-q\zeta L^{1/r}} = \sum_{i \geq 0} \frac{ (\zeta q)^i  }{ (1-\zeta q)^{i+1} } (L^{1/r} -1)^i.
\]
This expresses $J^{\KK}|_{\K_-}$ as sums of rational functions with poles only at roots of unity. Cf.\ Remark~\ref{J-function}.
\end{remark}

\subsection{Reconstruction theorem} \label{s:2.5}
We now specialize to the quintic $X$.
There are several reconstruction theorems that determine parts of the overruled Lagrangian cone $\mathcal{L}^{\KK}$ from certain initial values. See \cite{Lee_Pandharipande_2004, Givental_ER_Coh_K_2015, Givental_PEVIII_ER, Iritani_Milanov_Tonita}.

A convenient choice of the initial values for the reconstruction theorem is the Givental's $I^K$-function for $X$. First consider \cite{Givental_Lee_2003}
\[
J^K_{\PP^4} (0) = (1-q) \sum_{d \geq 0} \frac{Q^d}{ \prod_{k=1}^d (1-Pq^k)^5 },
\]
where $P = \O(-1)$ and hence $(1-P)^5=0$. Then \cite{Givental_PEV_Toric_2015} gives
\[
I^K = \sum_{d\geq 0} I_d^K Q^d = (1-q) \sum_{d=0}^{\infty} Q^d \frac{\prod_{k=1}^{5d} (1-P^5q^k) }{\prod_{k=1}^d (1-Pq^k)^5} = J^K(t^*),
\]
where the input $t^*$ can be computed:
\[
t^*(q) = I^K |_{\K_+}  -(1-q).
\]

\begin{theorem}[{Essentially \cite[Theorem~2]{Givental_PEVIII_ER}}] \label{t:2.12}
For $X$ the quintic threefold, we have
\begin{equation} \label{e:2.1}
\begin{split}
    &J^K(0) =  \sum_{d\geq 0}  I_d^K Q^d \cdot \\ & \quad  \exp \left( \sum_{k>0}\frac{ \sum_{i=0}^3 \Psi^k(\e_i(Q)) (1-P^kq^{kd})^i}{k(1-q^k)} \right) \sum_{i=0}^3 r_i(q,Q) (1-Pq^d)^i,
\end{split}
\end{equation}
for some uniquely determined $\e_i(Q)$ and $r_i (q, Q)$, where
\begin{equation} \label{e:2.2}
\begin{split}
    \e_i(Q) = \sum_{j \geq 1} \e_{ij}Q^j \in \mathbb{Q}[\![Q]\!], \\
    r_i(q,Q) 
    = \sum_{j\geq 0} r_{ij}(q) Q^j \in \mathbb{Q}[q][\![Q]\!].
\end{split}
\end{equation}
\end{theorem}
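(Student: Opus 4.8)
The plan is to obtain \eqref{e:2.1} from Givental's explicit reconstruction theorem, using the two points of the cone already at our disposal. Recall that $I^K=J^K(t^*)$ lies on the overruled Lagrangian cone $\mathcal{L}^K$, while $J^K(0)$ is the \emph{unique} point of $\mathcal{L}^K$ whose $\K_+$-projection is the bare dilaton shift $1-q$ (this is precisely the condition $t=0$, as $\mathcal{L}^K$ is a graph over $\K_+$). Moreover $J^K(0)=(1-q)+\sum_{d\geq 1}(\dots)Q^d$ with the degree-$\geq 1$ part lying in $\K_-$. The task is thus to transport $I^K$ to this distinguished point through $\mathcal{L}^K$-preserving operations and to recognize the shape \eqref{e:2.1}.

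The structural input is Givental's reconstruction theorem near the $I$-function. Two ingredients enter. First, the $K$-theoretic mirror-map (string-type) flows: multiplication by $\exp\big(\sum_{k>0}\Psi^k(\tau)/k(1-q^k)\big)$, for $\tau\in K^0(X)\otimes\Lambda$, preserves $\mathcal{L}^K$, and the exponent shape $\Psi^k/(1-q^k)$ is exactly that displayed in \eqref{e:2.1}. Second, the overruled structure: near any of its points $\mathcal{L}^K$ is swept by the ruling spaces $(1-q)T$, so that $J^K(0)$ is expressible through a finite $K^0(X)$-basis of ruling directions. Specializing to the quintic, where $K^0(X)=\mathbb{Q}[P]/(1-P)^4$ is free of rank $4$ on $\{(1-P)^i\}_{i=0}^3$, the flow parameter in the $d$-th summand is $\tau_d=\sum_{i=0}^3\e_i(Q)(1-Pq^d)^i$ and the ruling coefficients form $\sum_{i=0}^3 r_i(q,Q)(1-Pq^d)^i$; the substitution $q\mapsto q^d$ records the eigenvalue of $L$ at the first marked point in degree $d$, and the Calabi--Yau geometry enters only through $I_d^K$ (note $P^5\neq 1$ in $K^0(X)$). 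Givental's theorem guarantees that $J^K(0)$ admits a representation of the form \eqref{e:2.1}; what remains is to determine the data $\e_i,r_i$ and prove uniqueness.

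I would determine the data by induction on the $Q$-degree, matching $\K_+$-projections. At order $Q^0$ only $d=0$ contributes and $I_0^K=1-q$, so the condition becomes $(1-q)\sum_i r_{i0}(q)(1-P)^i=1-q$, forcing $r_{00}=1$ and $r_{i0}=0$ for $i\geq 1$. The key observation at order $Q^N$ is that each new unknown enters \emph{linearly} and only through the $d=0$ summand: the $k=1$ term of the exponent yields $\e_{iN}(1-P)^i$ once the factor $(1-q)^{-1}$ is cancelled by $I_0^K=1-q$, a $q$-constant, while $r_{iN}$ yields $(1-q)r_{iN}(q)(1-P)^i$, which vanishes at $q=1$. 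Collecting the known lower-order terms as $\sum_i c_i(q)(1-P)^i$ with each $c_i(q)\in\K_+$, the $\K_+$-equation decouples, for each $i$, into $\e_{iN}+(1-q)r_{iN}(q)=c_i(q)$; evaluating at $q=1$ gives $\e_{iN}=c_i(1)$ and hence $r_{iN}(q)=(c_i(q)-c_i(1))/(1-q)$. This determines the data uniquely, the evaluation at $q=1$ being precisely what separates the exponential data $\e_i$ from the ruling data $r_i$.

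The main obstacle is the structural input of the second paragraph: verifying that the operators $\exp\big(\sum_k\Psi^k(\tau)/k(1-q^k)\big)$ preserve $\mathcal{L}^K$ and that $J^K(0)$ lies in the resulting $4$-parameter family emanating from $I^K$ is the substance of Givental's reconstruction theorem, and the real work lies in importing it correctly to the quintic --- matching the $(1-Pq^d)^i$ basis, the $q\mapsto q^d$ shifts, and the exponent shape. A secondary difficulty is checking in the induction that the $c_i(q)$ are genuine polynomials, not merely Laurent, so that the $r_{iN}(q)$ indeed lie in $\mathbb{Q}[q]$ as asserted in \eqref{e:2.2}; this uses the regularity of $J^K(0)$ away from roots of unity together with the explicit pole structure of $I^K$.
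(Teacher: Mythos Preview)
Your proposal is correct and follows essentially the same route as the paper: invoke Givental's reconstruction theorem to obtain the representation \eqref{e:2.1} a priori with $r_i\in\mathbb{Q}[q,q^{-1}][\![Q]\!]$, then determine $\e_{iM}$ and $r_{iM}$ uniquely by induction on the $Q$-degree via the $\K_+$-matching condition, separating the two by the evaluation $\e_{iM}=c_i(1)$ and $r_{iM}(q)=(c_i(q)-c_i(1))/(1-q)$. The only minor difference is in justifying that the $r_{iM}$ are genuine polynomials in $q$: the paper argues this purely inductively (the right-hand side $f_i(q)$ is built from $\e_{i'M'}$, $r_{i'M'}$, and components of $I^K_{M'}$ with $M'<M$, all of which are polynomials by the induction hypothesis), whereas you gesture toward an analytic argument via the pole structure of $J^K(0)$ and $I^K$ --- the inductive argument is cleaner and self-contained here.
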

\begin{proof}[Sketch of proof]
This is essentially \cite[Theorem~2]{Givental_PEVIII_ER}, with minor improvements due to the special properties of the quintic. Givental's reconstruction \eqref{e:2.1} holds for some $\e_i$ and $r_i(q) \in \mathbb{Q}[q, q^{-1}][\![Q]\!]$. Using the induction on the degree of $Q$ and the condition 
\[
J^K(0) |_{\K_+} \left( := \text{projection to $\K_+$ of $J^K (0)$} \right) = 1-q,
\]
$\e_i(Q)$ and $r_i(q,Q)$ are uniquely determined. More precisely, given any $M$, the coefficients of $Q^M$ of the projections of the left hand side and of the right hand side of \eqref{e:2.1} to $\K_+$ must be equal
\begin{equation} \label{e:2.3}
\begin{split}
M=0, \quad (1-q) = (1-q) \sum_{i=0}^3 r_{i0} (1-P)^i \,  \Rightarrow \,  \sum_{i=0}^3 & r_{i0} (1-P)^i =1, \\
 \sum_i \left[ \e_{iM} + (1-q) r_{iM}(q) \right] (1- P)^i  = \sum_i f_i(q) (1-P)^i \, & \Rightarrow \, \e_{iM} = f_i(1)
\end{split}
\end{equation}
for some polynomials $f_i(q)$ of $q$, which can be written as polynomials of $\e_{i'M'}$, $r_{i'M'}$ and (components of) $I_{M'}^K$ for $M'<M$. This in particular also implies, by induction, that \emph{$r_{ik}(q)$ are polynomials of $q$} (instead of Laurent polynomials).
\end{proof}

\section{Proof of the conjecture} \label{section_proof}
\subsection{Outline of the proof}
As explained in the Introduction our proof involves both the virtual orbifold Hirzebruch--Riemann--Roch and the explicit reconstruction theorem, and is a little complicated. For readers' convenience, we break it down in four steps, and relegate some computations to Section~\ref{appen_Kawasaki} and Appendix~\ref{Appendix_degree0} so as not to break the flow of the proof.

\medskip

\noindent \textbf{(1)}. The first step is to establish an expression of the $J^K$-function in Corollary~\ref{c:3.2}. That is 
    \[
    \begin{split}
    \frac{1}{1-q}J^K(0) &= 1 + \frac{1-P}{5}\sum_{M\geq 1} Q^M \sum_{r \leq M} \sum_{\zeta^r=1} \Big( \frac{a_{M,\zeta}}{1-\zeta q} \Big)
    \\ &+ \frac{(1-P)^2}{5} \sum_{M\geq 1}Q^M \sum_{r \leq M} \sum_{\zeta^r=1} \Big( 
    \frac{b_{M,\zeta}}{1-\zeta q} + \frac{c_{M,\zeta}}{(1-\zeta q)^2}
    \Big)
    \\ & + \frac{(1-P)^3}{5} \sum_{M \geq 1} Q^M 
    \sum_{r \leq M} \sum_{\zeta^r =1} \left( \frac{\dd_{M,\zeta}}{1-q\zeta} + \frac{e_{M,\zeta}}{(1-\zeta q)^2} + \frac{f_{M,\zeta}}{(1-\zeta q)^3}
    \right),
    \end{split}
    \]
    where  $\sum_{\zeta^r=1}$ is the sum over \emph{primitive} $r$-th roots of unity. 
    We call $a_{M,\zeta}, c_{M,\zeta}$, $f_{M,\zeta}$ the coefficients of degree 0 terms, $b_{M,\zeta}$, $e_{M,\zeta}$ the coefficients of degree 1 terms, and $\dd_{M,\zeta}$ the degree 2 terms. This step uses mainly the reconstruction theorem.

\medskip
\noindent \textbf{(2)}.  
    The "degree $0$ terms" $a_{M,\zeta}, c_{M,\zeta}$, $f_{M,\zeta}$ are the easiest to handle. Let $\zeta$ be a primitive $r$-th root of unity. In Theorem~\ref{Thm_deg0_inv}, we show that
    \[
    \begin{split}
    a_{M,\zeta} = 0,
    c_{M,\zeta} = \frac{ \GV^{(3)}_{M/ r}  }{M^2},
    f_{M,\zeta} =  \frac{-2 \GV^{(3)}_{M/ r}  }{M^3},
    \end{split}
    \]
    where 
    \[
    \GV_n^{(\gamma)} := \sum_{d|n} d^{\gamma} \GV_d.
    \]
    Note that the above invariants are defined to be zero if $M/r$ are \emph{not integers}. Two proofs will be given. The one uses virtual orbifold HRR formula is given in Subsection~\ref{subsection_degree0}. The one in Appendix~\ref{Appendix_degree0} uses only the reconstruction theorem.

\medskip
\noindent \textbf{(3)}.    
For higher degree terms, we show in Theorem~\ref{Thm_rpower} that 
    \[
    \begin{split}
     b_{rd, \zeta} &= \GV^{(1)}_d + \frac{1}{r^2}\Big( \frac{-\GV_d^{(3)}}{d^2} \Big), 
     \\
     \dd_{rd,\zeta} &= r d\GV^{(-1)}_d + \GV^{(1)}_d + \frac{1}{r^2} \Big( \frac{-\GV_d^{(3)}}{d^2} \Big)    +\frac{1}{r^3}\Big( \frac{-\GV_d^{(3)}}{d^3} \Big),   
     \\
     e_{rd,\zeta} &= \frac{1}{r^2}\Big( \frac{\GV_d^{(3)}}{d^2} \Big)  + \frac{1}{r^3} \Big( \frac{3\GV_d^{(3)}}{d^3} \Big).
    \end{split}
    \]
This step uses the virtual orbifold HRR formula and induction on Novikov variable.


\medskip
\noindent \textbf{(4)}.   
    Combining the above, we have 
    \[
    \begin{split}
    &\frac{1}{1-q}J^K(0) = 1+  \\
    & + \frac{(1-P)^2}{5} \sum_{r \geq 1} \sum_{\zeta^r=1 }  \sum_{d \geq 1} Q^{rd} \left( \frac{\GV_{d}^{(1)}-\frac{1}{r^2d^2}\GV^{(3)}_d}{1-\zeta q} +\frac{\frac{1}{r^2d^2}\GV^{(3)}_d}{(1-\zeta q)^2}\right)
    \\
    &+ \frac{(1-P)^3}{5}\sum_{r \geq 1} \sum_{\zeta^r=1 }  \sum_{d \geq 1} Q^{rd} 
    \left(\frac{rd \GV_d^{(-1)} +  \GV_d^{(1)}  - \frac{1}{r^2d^2} \GV_d^{(3)} - \frac{1}{r^3d^3} \GV_d^{(3)} }{1-\zeta q} \right.
    \\  &
    \hspace{5.5cm} \left. +
    \frac{\frac{\GV_d^{(3)}}{r^2d^2} + \frac{3\GV^{(3)}_d}{r^3d^3}}{(1-\zeta q)^2} + \frac{\frac{-2 \GV^{(3)}_d}{r^3d^3}}{(1-\zeta q)^3}
    \right).
    \end{split}
    \]
    A direct computation in Theorem~\ref{t:3.14} shows that this coincides with Conjecture \ref{conjectureJK}.

\subsection{Analytic properties of $J^K(0)$ and a degree argument} \label{s:3.2}
In this subsection, our main task is to prove Theorem~\ref{t:3.1} below. We first \emph{partially} expand $I^K$ in the basis $\{(1-P)^i
\}_{i=0,1, 2,3}$:
\begin{equation} \label{e:3.1}
    \begin{split}
        &\frac{1}{1-q}I^{\KK} \\
        = &\sum_{d\geq 0} Q^d \frac{\prod_{r=1}^{5d} (1-P^5 q^r) }{(\prod_{r=1}^d (1-Pq^r))^5}
        \\
        =  &\sum_{d \geq 0} Q^d \frac{ \prod_{r=1}^{5d} ( 1-q^r - q^r (P^5-1) ) }{ ( \prod_{r=1}^d (1-q^r - q^r(P-1)) )^5 }
        \\
        = &\sum_{d \geq 0} Q^d \frac{ \prod_{r=1}^{5d} (1-q^r)  }{ (\prod_{r=1}^d (1-q^r))^5 } \cdot \\
        & \cdot \Big( 1 + e_1(5d) \frac{1-P^5}{P^5} + e_2(5d) \cdot (\frac{1-P^5}{P^5})^2 + e_3(5d) (\frac{1-P^5}{P^5})^3 \Big) \cdot
        \\
        & \cdot \Big( 1 - 5 e_1(d) \frac{1-P}{P} + (15e_1(d)^2 -5e_2(d)) (\frac{1-P}{P})^2 - (35e_1(d)^3 \\
        & \qquad -30e_1(d)e_2(d))(\frac{1-P}{P})^3  \Big),
    \end{split}   
\end{equation}
where
\[
e_i(k) := \sum_{1\leq j_1 < \cdots < j_i\leq k} \frac{1}{1-q^{j_1}} \cdots \frac{1}{1-q^{j_i}}.
\]
For the third equality, the following equalities are used
    \[
    \begin{split}
        \frac{1}{1-q^r - q^r(P-1)} &= \frac{1}{1-q^r} \sum_{i=0}^3 \Big(\frac{q^r}{1-q^r}\Big)^i (P-1)^i
        \\
        &= \frac{1}{1-q^r} \sum_{i=0}^3 \Big( \frac{1}{1-q^r} -1 \Big)^i (P-1)^i
        \\
        &= \frac{1}{1-q^r} \frac{1}{P}\sum_{i=0}^3  (-1)^i \Big(\frac{1}{(1-q^r)}\Big)^i \Big( \frac{1-P}{P}\Big)^i ,
    \end{split}
    \]
    \[
    \begin{split}
        1-q^r - q^r(P^5-1) &= (1-q^r) \Big( 1 - \frac{q^r}{1-q^r} (P^5-1)\Big)
        \\
        &=(1-q^r) \Big( P^5 - \frac{P^5-1}{1-q^r}  \Big)
        \\
        &=P^5 (1-q^r) \Big( 1 + \Big(\frac{1}{1-q^r}\Big) \Big( \frac{1-P^5}{P^5}\Big) \Big) .
    \end{split}
    \]
Note that in order to completely expand in terms of the basis, $1/P$ should have been rewritten as
\[
    \frac{1}{P} = \frac{1}{1-(1-P)} = 1 +(1-P) + (1-P)^2 + (1-P)^3.
\]
However, the precise form of expansion is not needed and \eqref{e:3.1} will suffice for our purpose below.

Fix a root of unity $\zeta$. Let $f \in \K= K^0(X)(q)[\![Q]\!] $. We will expand the $Q$-coefficients $f$ as a Laurent series in $(1- \zeta q)$, $K^0(X)[(1- \zeta q)^{-1}, (1- \zeta q)]\!]$, in terms of the basis $\{ (1- P)^i \}_{i=0}^3$ for $K^0(X)$. 
\emph{Define} the \emph{degree $\deg_{q=\zeta^{-1}}$} as follows:
\[
\deg_{q=\zeta^{-1}} ( (1-\zeta q)^i (1-P)^j ) := i+j, \quad \deg_{q=\zeta^{-1}} (0) := \infty . 
\]
For an inhomogeneous element, the degree is defined to be the lowest degree of the homogeneous parts.
We then have
\begin{equation} \label{e:svaluation}
    \begin{split}
 \deg_{q=\zeta^{-1}} (a\cdot b) & \geq \deg_{q=\zeta^{-1}} (a) + \deg_{q=\zeta^{-1}} (b) \\
 \deg_{q=\zeta^{-1}} (a + b) & \geq \op{min} \{ \deg_{q=\zeta^{-1}} (a), \deg_{q=\zeta^{-1}} (b)\} .
\end{split}
\end{equation}
Indeed, one might wish to consider $\deg_{q=\zeta^{-1}}$ a \emph{discrete ``semi-valuation''}, and hence defines an adic topology, for these localized functions.
Expansion of $f\in \K$ in the ascending order of $\deg_{q=\zeta^{-1}}$, i.e., the $K$-valued Laurent series in $(1- \zeta q)$, is denoted as $(f)_{q=\zeta^{-1}}$ and is called the \emph{localization of $f$ at $q=\zeta^{-1}$}. For $f,g\in \K$, write 
\[
f \equiv_{q=\zeta^{-1}}^n g \quad \mbox{(resp.\ $f\equiv_{q=\zeta^{-1}} g$)},
\]
if $(f)_{q=\zeta^{-1}}$ and $(g)_{q=\zeta^{-1}}$ agree up to degree $\leq n$ (resp.\ they agree to all degrees).

\begin{remark} \label{r:3.1}
The semi-valuation above can be extended to the entire $\K$ by extending the totally ordered group from $\mathbb{Z}$ to $\mathbb{Z}^{\oplus 2}$ and define the degree of an element $f$ in $K^0(X)[(1- \zeta q)^{-1}, (1- \zeta q)]\!] [\![ Q ]\!]$ as
\[
  {\deg_{\zeta^{-1}}} \left( (1-\zeta q)^i (1-P)^j Q^d \right) = (d, i+j),
\]
with the total ordering on $\mathbb{Z}^{\oplus 2}$ being the lexicographic ordering. We will not use this but stick to the $\deg_{q=\zeta^{-1}}$.
\end{remark}

\begin{theorem} \label{t:3.1}
For any roots of unity $\zeta$, the $Q$-coefficients of $[J^{\KK}]_- $ are sums of terms with non-negative degrees with respect to $\deg_{q=\zeta^{-1}}$.
\end{theorem}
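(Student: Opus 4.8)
The plan is to deduce the statement for $[J^{\KK}]_-$ from the same degree bound for the whole function $J^{\KK}(0)$. Since the input is $t=0$ and $J^{\KK}(0)|_{\K_+}=1-q$, we have $[J^{\KK}]_- = J^{\KK}(0)-(1-q)$; as $1-q$ is a polynomial in $q$, it is pole-free at $q=\zeta^{-1}$ and has $\deg_{q=\zeta^{-1}}\ge 0$, so by the subadditivity \eqref{e:svaluation} it suffices to prove $\deg_{q=\zeta^{-1}}\ge 0$ for every $Q$-coefficient of $J^{\KK}(0)$. I would prove this by feeding the explicit expansion \eqref{e:3.1} of $I^{\KK}$ into the reconstruction formula \eqref{e:2.1} and checking non-negativity factor by factor, working modulo a fixed power of $Q$ so that every quantity in sight is a genuine rational function of $q$ with a well-defined localization at $q=\zeta^{-1}$.

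The combinatorial heart is that $I^{\KK}/(1-q)$ already has $\deg_{q=\zeta^{-1}}\ge 0$. The key observation, read off from \eqref{e:3.1}, is that a pole at $q=\zeta^{-1}$ (with $\zeta$ a primitive $r$-th root of unity) is produced only by a factor $\frac{1}{1-q^j}$ with $r\mid j$, and that in $e_i(k)=\sum_{j_1<\dots<j_i}\prod_\ell \frac{1}{1-q^{j_\ell}}$ at most $i$ of the chosen indices can be divisible by $r$; hence $e_i(k)$ has pole order $\le i$ at $q=\zeta^{-1}$. In \eqref{e:3.1} each $e_a(5d)$ is attached to $(\frac{1-P^5}{P^5})^a$, and each weight-$b$ polynomial in the $e_i(d)$ (of pole order $\le b$) is attached to $(\frac{1-P}{P})^b$; since $1-P^5=5(1-P)-\cdots$ and $\frac1P=1+(1-P)+\cdots$ have lowest $(1-P)$-degree $1$ and $0$, these carry $(1-P)$-degree $\ge a$ and $\ge b$. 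Thus every monomial has $\deg_{q=\zeta^{-1}}\ge -(a+b)+(a+b)=0$, while the scalar prefactor $\frac{\prod_{j=1}^{5d}(1-q^j)}{(\prod_{j=1}^d(1-q^j))^5}$ contributes a zero of order $\lfloor 5s/r\rfloor\ge 0$ (writing $d=\lfloor d/r\rfloor r+s$) and only improves the degree. This gives $\deg_{q=\zeta^{-1}}(I_d^K)\ge 0$ for all $d$.

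Next I would propagate the property through \eqref{e:2.1}. The factor $\sum_i r_i(q,Q)(1-Pq^d)^i$ is harmless: $r_i$ is a polynomial in $q$ (Theorem~\ref{t:2.12}), hence pole-free, and $(1-Pq^d)^i$ is a polynomial in $q$ of $(1-P)$-degree $\ge 0$ (indeed $\ge i$ when $r\mid d$), so $\deg_{q=\zeta^{-1}}\ge 0$. For the exponential I would split its exponent into the $i\ge 1$ and $i=0$ parts. In the $i\ge 1$ part the only poles come from $\frac{1}{1-q^k}$ with $r\mid k$, a simple pole; but $r\mid k$ forces $r\mid kd$, so $(1-P^kq^{kd})^i$ has $(1-P)$-degree $\ge i\ge 1$, exactly compensating the pole. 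Since sums and products preserve non-negativity of $\deg_{q=\zeta^{-1}}$, and modulo a fixed power of $Q$ the exponential is a finite polynomial in its (non-negative-degree) exponent, this part has $\deg_{q=\zeta^{-1}}\ge 0$; assembling the three factors via \eqref{e:svaluation} then yields the theorem.

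The main obstacle is precisely the $i=0$ summand of the exponent, $\sum_{k>0}\frac{\Psi^k(\e_0(Q))}{k(1-q^k)}$, a scalar carrying no $(1-P)$ yet with a genuine simple pole whenever $r\mid k$, so $\deg_{q=\zeta^{-1}}=-1$; exponentiating such a term would create negative-degree contributions and destroy the bound. Because this factor is independent of $d$, it pulls out of the $d$-sum as an overall scalar $\exp\bigl(\sum_{k>0}\frac{\Psi^k(\e_0(Q))}{k(1-q^k)}\bigr)$, and the crux is to show it equals $1$, i.e.\ that $\e_0\equiv 0$. I expect this to reflect the Calabi--Yau normalization that the $(1-P)^0$-component of $\frac{1}{1-q}J^{\KK}(0)$ is the pole-free constant $1$. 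Establishing $\e_0\equiv 0$ directly from the inductive determination of the $\e_i$ in Theorem~\ref{t:2.12}, rather than by invoking the conjecture we are trying to prove, is the delicate point and is where I would concentrate the effort.
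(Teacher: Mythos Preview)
Your strategy coincides with the paper's: decompose $J^{\KK}(0)$ via the reconstruction formula \eqref{e:2.1} into the $I^K$-factor, the exponential, and the polynomial tail, and verify $\deg_{q=\zeta^{-1}}\ge 0$ for each, using the expansion \eqref{e:3.1} for $I^K$ and Theorem~\ref{t:2.12} for the $r_i$. Your treatment of $I^K_d$ and of $\sum_i r_i(q,Q)(1-Pq^d)^i$ is the same as the paper's; the paper notes, slightly more strongly, that the scalar prefactor $\prod_{k=1}^{5d}(1-q^k)\big/\prod_{k=1}^d(1-q^k)^5$ is in fact a polynomial in $q$, not merely pole-free at roots of unity.

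The only gap is the one you flag yourself: the vanishing $\e_0\equiv 0$. The paper fills it by a short induction, and the missing observation is that the $(1-P)^0$-component of $I^K_d$, namely
\[
I^K_{0d}(q)=(1-q)\,\frac{\prod_{k=1}^{5d}(1-q^k)}{\prod_{k=1}^d(1-q^k)^5},
\]
is a polynomial in $q$ divisible by $(1-q)$, so $I^K_{0d}(1)=0$ for every $d\ge 1$. Now take the $(1-P)^0$-component of the $\K_+$-identity in \eqref{e:2.3} at order $Q^M$. Since the $(1-P)^0$-part of a product is the product of $(1-P)^0$-parts, and (by the inductive hypothesis $\e_{0,M'}=0$ for $M'<M$) the $(1-P)^0$-part of the exponential is, for each $d$, a polynomial in $q$, the entire contribution from $d\ge 1$ carries the factor $I^K_{0d}(q)$; evaluating at $q=1$ kills all of these, and the $d=0$ term yields exactly $\e_{0M}+(1-q)r_{0M}(q)$. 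Hence $\e_{0M}=0$. This is precisely the ``inductive determination from Theorem~\ref{t:2.12}'' you anticipated; the Calabi--Yau input you were looking for is nothing more than $I^K_{0d}(1)=0$.
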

\begin{proof}
The reconstruction theorem \eqref{e:2.1} states that
\[
\begin{split}
 &J^{\KK}(0) = \\
 &
 \sum_{d\geq 0} I^{\KK}_d Q^d \exp \left( \sum_{k>0}\frac{ \sum_{i=0}^3 \Psi^k(\e_i(Q)) (1-P^kq^{kd})^i}{k(1-q^k)} \right) \sum_{i=0}^3 r_i(q,Q) (1-Pq^d)^i.
\end{split}
\]
The right hand side of the formula has three parts, and we will show that each part has $\deg_{q=\zeta^{-1}} \geq 0$.
In the first part, we claim that $I^K_d$ have $\deg_{q=\zeta^{-1}} \geq 0$. This can be seen from the expansion of $I^K_d (q)$ in \eqref{e:3.1} and the paragraphs following it. In particular, the factor 
\begin{equation} \label{e:3.2}
     (1-q)^{-1} I^K_{0d}(q) = \frac{ \prod_{r=1}^{5d} (1-q^r)  }{\prod_{r=1}^d (1-q^r)^5 }
\end{equation}
is actually a polynomial in $q$ and $e_i(d)$ has poles of order no greater than $i$ at $q= \zeta^{-1}$.

On the other hand, the last part of formula, $\sum_{i=0}^3 r_i(q,Q) (1-Pq^d)^i$, consists of only the polynomial in $q$, as proven in Theorem~\ref{t:2.12}. Its $Q$-coefficients therefore have $\deg_{q=\zeta^{-1}} \geq 0$. 

In the middle $\exp$ factor, we first claim $\e_{0} (Q)=0$. Assuming the claim, the $\exp$ factor can be written as follows
\[
\begin{split}
    & \exp \left( \sum_{k>0}\frac{ \sum_{i=0}^3 \Psi^k(\e_i(Q)) (1-P^kq^{kd})^i}{k(1-q^k)} \right) \\
    =& \exp \left( \sum_{k>0}\frac{ \sum_{i=1}^3 \Psi^k(\e_i(Q)) (1-P^kq^{kd})^i}{k(1-q^k)} \right)
    \\
    = & \exp \left( \sum_{k>0}\frac{ \sum_{i=1}^3 \Psi^k(\e_i(Q)) (1-q^{kd}-q^{kd}(P^k-1))^i}{k(1-q^k)} \right).
\end{split}
\]
One see clearly that the exponent has $\deg_{q=\zeta^{-1}} \geq 0$. We now return to the proof of claim. By definition $\e_{00} =0$, and assume that $\e_{0M'} =0$ for all $M' <M$. By \eqref{e:3.2}, $I^K_{od}$ is a polynomial of $q$ and is divisible by $1-q$.
Following the reconstruction theorem and in particular \eqref{e:2.3}, we have
\[
 \e_{0M} = - \sum_{d_1+d_2 =M} I^K_{0 d_1} (1) r_{0 d_2}(1).
\]
Note that the LHS is a constant in $q$ while the RHS always contains a factor $I^K_{0 d_1} (q)$, a polynomial in $q$ divisible by $1-q$. We have therefore, $e_{0M} = \mathrm{RHS} (q=1) =0$.

The theorem follows by the semi-valuation property \eqref{e:svaluation}.
\end{proof}

Theorem~\ref{t:3.1} has the following important consequence on the analytic property of $\displaystyle {(1-q)^{-1}} J^{\KK}(0)$.

\begin{corollary} \label{c:3.2}
The small $J^K$-function has the following expansion in $Q$ and $q$
\[
    \begin{split}
    \frac{1}{1-q} J^K(0) &= 1 + \frac{1-P}{5}\sum_{M\geq 1} Q^M \sum_{r \leq M} \sum_{\zeta^r=1} \Big( \frac{a_{M,\zeta}}{1-\zeta q} \Big)
    \\ &+ \frac{(1-P)^2}{5} \sum_{M\geq 1}Q^M \sum_{r \leq M} \sum_{\zeta^r=1} \Big( 
    \frac{b_{M,\zeta}}{1-\zeta q} + \frac{c_{M,\zeta}}{(1-\zeta q)^2}
    \Big)
    \\ & + \frac{(1-P)^3}{5} \sum_{M \geq 1} Q^M 
    \sum_{r \leq M} \sum_{\zeta^r =1} \left( \frac{\dd_{M,\zeta}}{1-q\zeta} + \frac{e_{M,\zeta}}{(1-\zeta q)^2} + \frac{f_{M,\zeta}}{(1-\zeta q)^3}
    \right),
    \end{split}
\]
where $\displaystyle \sum_{\zeta^r=1}$ is the sum over all primitive $r$-th roots of unities.
\end{corollary}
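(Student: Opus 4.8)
The plan is to read the claimed expansion off the analytic bound of Theorem~\ref{t:3.1} by a partial-fraction decomposition in $q$. Since the input is $t=0$, the $\K_+$-part of $J^K(0)$ is exactly the dilaton shift, so $J^K(0) = (1-q) + [J^K(0)]_-$ and $\frac{1}{1-q}J^K(0) = 1 + \frac{1}{1-q}[J^K(0)]_-$. By Remark~\ref{r:2.14} every $Q$-coefficient of $[J^K(0)]_-$ is a rational function of $q$ whose only poles lie at roots of unity, with no pole at $q=0$ and a zero at $q=\infty$. The task is therefore to bound, at each root of unity $q=\zeta^{-1}$, the pole order in each $(1-P)^j$-component, to determine the admissible $r$, and to pin down the constant term.

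The heart of the argument is the pole-order bound: the $(1-P)^j$-component of every $Q$-coefficient of $\frac{1}{1-q}J^K(0)$ has a pole of order at most $j$ at each $q=\zeta^{-1}$, i.e.\ $\deg_{q=\zeta^{-1}} \geq 0$. For $\zeta \neq 1$ this is immediate from Theorem~\ref{t:3.1}, because $(1-q)$ is a unit in the local ring at $q=\zeta^{-1}$ and division by a unit preserves $\deg_{q=\zeta^{-1}}$. The delicate case is $\zeta = 1$, where dividing by $(1-q)$ lowers the degree by one; here I would absorb $(1-q)$ into $I^K$ at the outset and work from the reconstruction formula \eqref{e:2.1}. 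By \eqref{e:3.1} the factor $(1-q)^{-1}I^K_d$ is the polynomial $(1-q)^{-1}I^K_{0d}$ of \eqref{e:3.2} times expressions in the $e_i(5d)$ and $e_i(d)$, in which each $e_i$ (a pole of order $\leq i$ at $q=1$) is matched against a factor divisible by $(1-P)^i$; hence $(1-q)^{-1}I^K_d$ already has $\deg_{q=1}\geq 0$. Together with the fact, shown in the proof of Theorem~\ref{t:3.1}, that the exponential factor and $\sum_i r_i(q,Q)(1-Pq^d)^i$ each have $\deg_{q=1}\geq 0$, the multiplicativity \eqref{e:svaluation} yields $\deg_{q=1}\bigl(\frac{1}{1-q}J^K(0)\bigr)\geq 0$.

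Next I would locate the poles. For the constant term, the $(1-P)^0$-component of $[J^K(0)]_-$ has, by Theorem~\ref{t:3.1}, no pole at any root of unity; being a rational function with no finite poles that vanishes at $q=\infty$, it must be $0$, so after dividing by $(1-q)$ and adding the dilaton shift the $(1-P)^0$-part of $\frac{1}{1-q}J^K(0)$ is the constant $1$. For the range of $r$, I would trace which denominators $1-q^k$ can occur in the $Q^M$-coefficient of \eqref{e:2.1}: the exponential contributes them only for $k\leq M$, since $\Psi^k(\e_i)$ begins in degree $Q^k$, while in $(1-q)^{-1}I^K_d$ the zeros of $(1-q)^{-1}I^K_{0d}$ cancel the $e_i$-poles at every primitive $r$-th root of unity with $r>d$, leaving poles only for $r\leq d\leq M$. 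Hence the $Q^M$-coefficient has poles only at primitive $r$-th roots of unity with $r\leq M$.

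Finally I would assemble the statement by partial fractions. Each $Q^M$-coefficient with $M\geq 1$ is finite at $q=0$ and vanishes at $q=\infty$ and has poles only at the roots of unity just identified, so its partial-fraction expansion is a sum of principal parts $\frac{c_{\zeta,k}}{(1-\zeta q)^k}$ with no polynomial remainder. Expanding each $c_{\zeta,k}\in K^0(X)$ in the basis $\{(1-P)^j\}$ and applying the pole-order bound (which kills every contribution with $k>j$) leaves precisely the three blocks carrying pole orders $\{1\}$, $\{1,2\}$ and $\{1,2,3\}$ on $(1-P)^1$, $(1-P)^2$ and $(1-P)^3$, which is the asserted form. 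I expect the only genuinely delicate point to be the $\zeta=1$ case of the pole-order bound, namely verifying that dividing by $(1-q)$ produces no spurious extra pole at $q=1$; the rest is bookkeeping layered on Theorem~\ref{t:3.1}.
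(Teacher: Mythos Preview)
Your proof is correct and follows essentially the same route as the paper's: both combine Remark~\ref{r:2.14} with the degree bound of Theorem~\ref{t:3.1} and then trace the location of poles ($r\le M$) through the three factors of the reconstruction formula~\eqref{e:2.1}. Your treatment is in fact slightly more explicit than the paper's at $\zeta=1$, where you correctly note that dividing by $1-q$ costs one unit of $\deg_{q=1}$ and recover it by observing (as the paper does inside the proof of Theorem~\ref{t:3.1}) that already $(1-q)^{-1}I^K_d$ has $\deg_{q=1}\ge 0$.
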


\begin{proof}
By Remark~\ref{r:2.14}, the partial fraction expansion of
\[
 {(1-q)^{-1}} J^{\KK}(t, q, Q) \in \K = \K_+ \oplus K_-
\]
consists of the $\K_+$-projection, a Laurent polynomial in $q$, and the $\K_-$ projection, a rational function in $q$ with poles only at roots of unity and vanishing at $\infty$.
We know that ${(1-q)^{-1}} J^{\KK}(t=0)|_{\K_+} = 1$. Therefore, the RHS must have the specified form, provided that we can prove the coefficients of $Q^M$ have poles only at $r$-th roots of unity for $r \leq M$. This can be seen from the reconstruction theorem as follows.


As in the proof of Theorem~\ref{t:3.1}, we employ the reconstruction theorem \eqref{e:2.1} and analyze the three factors $I^K$, the exponential, and the polynomial $r_i(q,Q)$ separately. 
In \eqref{e:3.1} of $I^K_d$, the factor $e_i(5d)$ in the coefficient of $Q^d$ can be cancelled by the numerator ${\prod_{r=1}^{5d} (1-q^r)}$ of the overall factor. The remaining denominator has poles only at $r$-th roots of unity with $r \leq d$.
For the exp factor, $\e_i(Q=0)=0$ for all $i$ and hence any expressions with poles at roots of unity of order $k$ appear with $Q^{jk}$ for $j \geq 1$.
Finally, the third factor $r_i$ are polynomial in $q$ and have no poles. This finishes the proof.
\end{proof}

In the above expression, we observe that the terms involving $a_{M,\zeta}, c_{M,\zeta}$, and $f_{M,\zeta}$ have $\deg_{q=\zeta^{-1}} =0$; those involving $b_{M,\zeta}$ and $e_{M,\zeta}$ have $\deg_{q=\zeta^{-1}} =1$; those involving $\dd_{M,\zeta}$ have $\deg_{q=\zeta^{-1}} =2$.

\subsection{The degree 0 terms} \label{subsection_degree0}
Let $\zeta$ be a primitive $r$-th roots of unity. In this subsection, we compute $a_{M,\zeta}, c_{M,\zeta}$, and $f_{M,\zeta}$.

\begin{theorem} \label{Thm_deg0_inv}
Let $d := M/r$. If $M/r$ are not integers, we have
\[
 a_{M,\zeta} = c_{M,\zeta} = f_{M,\zeta}=0.
\]
Otherwise,  we have
\[
    a_{M,\zeta} =0, \quad
    c_{M,\zeta} = \frac{ \GV_{d}^{(3)} }{r^2d^2}, \quad
    f_{M,\zeta} = \frac{-2 \GV_{d}^{(3)}}{r^3d^3}.
\]
\end{theorem}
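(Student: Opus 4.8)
The plan is to extract the \emph{leading singular part} of the $\zeta$-stratum contributions to $J^{\KK}(0)$ from the stem formula of Proposition~\ref{Prop_Stem_inv} and to match it with the small $J^H$-function. By Theorem~\ref{t:3.1} every $Q$-coefficient of $[J^{\KK}]_-$ has $\deg_{q=\zeta^{-1}}\ge 0$, so $a_{M,\zeta},c_{M,\zeta},f_{M,\zeta}$ are exactly the coefficients of the \emph{maximal} poles $(1-\zeta q)^{-1},(1-\zeta q)^{-2},(1-\zeta q)^{-3}$ attached to $(1-P),(1-P)^2,(1-P)^3$: in each $(1-P)$-degree $j$ the semi-valuation forces pole order exactly $j$. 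Computing such a leading symbol is far cheaper than the full invariant, which is what makes the degree $0$ terms the easiest to treat.

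Specializing Proposition~\ref{Prop_Stem_inv} to $t=0$, I would use the expansion $\frac{1}{1-q\zeta L^{1/r}}=\sum_{i\ge 0}\frac{(q\zeta)^i}{(1-q\zeta)^{i+1}}(L^{1/r}-1)^i$, in which the $i$-th summand carries a pole of order $i+1$ at $q=\zeta^{-1}$; hence a degree $0$ term paired with $(1-P)^{i+1}$ selects precisely this summand. Inserting a leg $\mathrm{leg}_\zeta(L)$, or retaining the tail part $\mathrm{tail}_\zeta(\zeta^{-1}L^{1/r})$ of $\delta_\zeta$, raises the $(1-P)$-degree without producing a compensating pole at $q=\zeta^{-1}$, so it strictly increases $\deg_{q=\zeta^{-1}}$. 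I would make this rigorous from the semi-valuation estimates \eqref{e:svaluation} together with an induction on the Novikov degree, leaving at degree $0$ only the bare two-pointed stem ($n=0$), with input $\frac{\Phi^\alpha}{1-q\zeta L^{1/r}}$ at the first (stacky) point and the leading piece $1-\zeta^{-1}L^{1/r}$ of $\delta_\zeta(L^{1/r})$ at the last. If $d=M/r\notin\mathbb{Z}$ there is no stem of quotient degree $d$, giving the stated vanishing.

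Next I would run the virtual orbifold HRR reduction \eqref{e:kawasaki} on this bare stem. Each factor $L^{1/r}-1$ contributes a descendant class weighted by $r^{-1}$, coming from the comparison of cotangent lines on the stem and its $r$-fold cover; and $\ch(\Phi^j)=\tfrac15 H^{j}+\cdots$ supplies the leading term of the dual basis. The crucial simplification is that the dilaton-shift-type input $1-\zeta^{-1}L^{1/r}$ at the last point cancels the node-smoothing factor $(1-\zeta^{-1}L^{1/r})^{-1}$ inside $\Tr(\Lambda^*N^*_\M)^{-1}$, so that the last point carries the plain fundamental class $\tau_0(1)$ and all $\zeta$-dependence disappears. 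The string equation then reduces the two-pointed stem to a one-pointed degree $d$ descendant invariant of $X$, and the CY$3$ constraint $\vdim\M_{0,1}(X,d)=1$ selects the survivors: for $(1-P)^1$ (no descendant) one lands on $\langle\tau_0(1),\tau_0(H^2)\rangle^H_{0,2,d}=0$, giving $a_{M,\zeta}=0$; for $(1-P)^2$ (one descendant) $\langle\tau_0(1),\tau_1(H)\rangle^H_{0,2,d}=\langle H\rangle^H_{0,1,d}=d\,\GW_d$; and for $(1-P)^3$ (two descendants) $\langle\tau_0(1),\tau_2(1)\rangle^H_{0,2,d}=\langle\psi\rangle^H_{0,1,d}=-2\,\GW_d$. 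Together with the overall Kawasaki automorphism factor $\tfrac1r$, the $r^{-i}$ from the $i$ cotangent comparisons yields the prefactors $r^{-2}$ for $c$ and $r^{-3}$ for $f$; finally \eqref{e:1.2} gives $\GV^{(3)}_d=d^3\GW_d$, so that $c_{M,\zeta}=\tfrac{d\GW_d}{r^2}=\tfrac{\GV^{(3)}_d}{r^2d^2}$ and $f_{M,\zeta}=\tfrac{-2\GW_d}{r^3}=\tfrac{-2\GV^{(3)}_d}{r^3d^3}$, as claimed.

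The technical heart, which I would relegate to Section~\ref{appen_Kawasaki}, is the precise leading-order evaluation of $\td(T_\M)$ and $\Tr(\Lambda^*N^*_\M)^{-1}$ in \eqref{e:kawasaki}: one must identify the smoothing directions at the two stacky points, verify the cancellation of the last-point input against the normal bundle, check that the remaining tangent and first-point normal contributions are trivial at $\deg_{q=\zeta^{-1}}=0$, and pin down the exact power of $r$. The companion combinatorial input — that legs and tails contribute only to $\deg_{q=\zeta^{-1}}>0$ — I would establish from \eqref{e:svaluation} and induction on the Novikov degree. Everything else, namely the leading-symbol extraction, the string equation, and the CY$3$ dimension count, stays in the main line of argument. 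The independent reconstruction-theoretic proof of Appendix~\ref{Appendix_degree0} avoids the normal-bundle analysis entirely, reading the same leading coefficients directly off $I^{\KK}_d$, the exponential, and the polynomials $r_i(q,Q)$ in \eqref{e:2.1}.
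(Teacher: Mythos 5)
Your proposal is correct and follows essentially the same route as the paper: extract the maximal-pole (degree-$0$) coefficients via the semi-valuation, specialize the stem formula of Proposition~\ref{Prop_Stem_inv} to $t=0$, kill legs and tails by the degree/dimension argument (Lemma~\ref{lemma_vanishingH^2}), and reduce the bare two-pointed stem to the one-pointed descendant invariants $\langle H\rangle_{0,1,d}=d\,\GW_d$ and $\langle\psi\rangle_{0,1,d}=-2\,\GW_d$ with the powers of $r$ coming from the cotangent-line comparison and the orbifold normalization. The only cosmetic difference is bookkeeping at the last marked point: the paper keeps the leading constant $(1-\zeta^{-1})$ of $\ch(1-\zeta^{-1}L^{1/r})$ and cancels it against the prefactor $(1-q)^{-1}\to(1-\zeta^{-1})^{-1}$, whereas you absorb it into a cancellation against the normal-bundle factor before applying the string equation — either convention gives the stated answers.
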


\begin{proof}
The following expansion will be used.
\begin{equation} \label{e:3.4}
\begin{split}
   \frac{1}{1-q L} &=  \frac{ 1 - (\zeta^{-1}L-1)+(\zeta^{-1}L-1)^2 }{1-\zeta q}  \\ 
    + &\frac{(\zeta^{-1}L-1) -2 (\zeta^{-1}L-1)^2}{(1-\zeta q)^2} + \frac{ (\zeta^{-1}L-1)^2 }{(1-\zeta q)^3} + O (\zeta^{-1}L -1)^3,
   \\
   \frac{1}{1-q} &=  \frac{1}{1-\zeta^{-1}} - \frac{\zeta^{-1}}{(1-\zeta^{-1})^2} (1-\zeta q) 
   \\
   & \quad + \frac{\zeta^{-2}}{(1-\zeta^{-1})^3} (1-\zeta q)^2 +  O(1-\zeta q)^3.
\end{split}
\end{equation}
Let $\Coeff(f(x);x^d)$ be the coefficient of $x^d$ in $f(x)$. By definition,
\[
\begin{split}
    a_{rd,\zeta} &= 5 \, \Coeff \Big(\frac{1}{1-q}J^K(0); Q^{rd} \frac{1-P}{1-\zeta q} \Big)
    \\
    &= \Coeff \Big( \frac{1}{1-q} Q^{rd}(1-P) \langle \frac{(1-P)^2 + (1-P)^3}{1-qL} \rangle^{X_{\zeta}}_{0,1,rd}; Q^{rd}\frac{1-P}{1-\zeta q} \Big)
    \\
    &= \Coeff \Big( \frac{1}{1-q} \langle \frac{(1-P)^2}{1-\zeta q} \rangle^{X_{\zeta}}_{0,1,rd}; \frac{1}{1-\zeta q} \Big)
    \\
    &= 0.
\end{split}
\]
In the last equality, Lemma~\ref{lemma_statement1} is used.
Similarly for $c_{rd,\zeta}$
\[
\begin{split}
    c_{rd,\zeta} &= 5 \, \Coeff \Big(\frac{1}{1-q}J^K(0); Q^{rd} \frac{(1-P)^2}{(1-\zeta q)^2} \Big)
    \\
    &= \Coeff \Big( \frac{1}{1-q} Q^{rd}(1-P)^2 \langle \frac{(1-P) + (1-P)^2}{1-qL} \rangle^{X_{\zeta}}_{0,1,rd}; Q^{rd}\frac{(1-P)^2}{(1-\zeta q)^2} \Big)
    \\
    &= \Coeff \Big( \frac{1}{1-q} \langle \frac{ (\zeta^{-1} L -1)(1-P)}{(1-\zeta q)^2} \rangle^{X_{\zeta}}_{0,1,rd}; \frac{1}{(1-\zeta q)^2} \Big)
    \\
    &= \frac{ \GV^{(3)}_{d}}{r^2d^2}.
\end{split}
\]
For the last equality, we use Lemma~\ref{lemma_statement2}. Note that the factor $(1-\zeta^{-1})$ will be cancelled by the constant term of $(1-q)^{-1}$. For $f_{rd,\zeta}$
\[
\begin{split}
    f_{rd,\zeta} &= 5 \, \Coeff \Big(\frac{1}{1-q}J^K(0); Q^{rd} \frac{(1-P)^3}{(1-\zeta q)^3} \Big)
    \\
    &= \Coeff \Big( \frac{1}{1-q} Q^{rd}(1-P)^3 \langle \frac{1 + (1-P) - (1-P)^3}{1-qL} \rangle^{X_{\zeta}}_{0,1,rd}; Q^{rd}\frac{(1-P)^3}{(1-\zeta q)^3} \Big)
    \\
    &= \Coeff \Big( \frac{1}{1-q} \langle \frac{ (\zeta^{-1} L -1)^2}{(1-\zeta q)^3} \rangle^{X_{\zeta}}_{0,1,rd}; \frac{1}{(1-\zeta q)^3} \Big)
    \\
    &= \frac{ -2 \GV_d^{(3)}}{r^3d^3}.
\end{split}
\]
Lemma~\ref{lemma_statement3} is used in the last equality. 

Lastly, when $M/r$ are not integers, we observe that 
\[
\langle (1-P)^2\rangle^{X_{\zeta}}_{0,1,M} = \langle (\zeta^{-1}L-1)(1-P)\rangle^{X_{\zeta}}_{0,1,M} =  \langle (\zeta^{-1}L-1)^2\rangle^{X_{\zeta}}_{0,1,M} =0
\]
by Lemma~\ref{lemma_statement1}, Lemma~\ref{lemma_statement2}, Lemma~\ref{lemma_statement3}, and the convention that $\GW_{M/r} =0$ if $M/r$ are not integers. Geometrically, this vanishing reflects the fact that the appearance of $r$-th roots of unity in $q$-expressions comes from the $mr$-fold covers (for $m \in \mathbb{Z}_{\geq 1}$) in the \emph{stem} contribution, where the degrees of curve classes are divisible by $mr$.
This completes the proof.
\end{proof}

\subsection{Higher degree terms}
Higher degree terms can be computed by induction on Novikov variable.

\begin{theorem} \label{Thm_rpower}
\[
\begin{split}
    &\frac{1}{1-q}J^{\KK}(0) = 1+ \frac{1}{5} \sum_{r \geq 1} \sum_{\zeta: {\rm ord}(\zeta)=r } (1-P)^2 \sum_{d \geq 1} Q^{rd} \left( \frac{ b_{rd,\zeta} }{1-\zeta q} +\frac{\frac{1}{r^2d^2}\GV^{(3)}_d}{(1-\zeta q)^2}\right)
    \\
    &+ \frac{1}{5}\sum_{r \geq 1} \sum_{\zeta: {\rm ord}(\zeta)=r } (1-P)^3 \sum_{d \geq 1} Q^{rd}\left(
    \frac{\dd_{rd,\zeta} }{1-\zeta q} + \frac{ e_{rd,\zeta} }{(1-\zeta q)^2} + \frac{\frac{-2 \GV^{(3)}_d}{r^3d^3}}{(1-\zeta q)^3}  
    \right),
\end{split}
\]
where 
 \[
    \begin{split}
     b_{rd, \zeta} &= \GV^{(1)}_d + \frac{1}{r^2}\Big( \frac{-\GV_d^{(3)}}{d^2} \Big)  ,
     \\
     e_{rd,\zeta} &= \frac{1}{r^2}\Big( \frac{\GV_d^{(3)}}{d^2} \Big)  + \frac{1}{r^3} \Big( \frac{3\GV_d^{(3)}}{d^3} \Big) ,
     \\
     \dd_{rd,\zeta} &= r d\GV^{(-1)}_d + \GV^{(1)}_d + \frac{1}{r^2} \Big( \frac{-\GV_d^{(3)}}{d^2} \Big)    +\frac{1}{r^3}\Big( \frac{-\GV_d^{(3)}}{d^3} \Big)   .
    \end{split}
\]
\end{theorem}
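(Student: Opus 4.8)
The plan is to prove Theorem~\ref{Thm_rpower} by induction on the Novikov degree $M=rd$, feeding the Kawasaki HRR recursion of Proposition~\ref{Prop_Stem_inv} into the localization expansion \eqref{e:3.4}, now retaining one order higher than was needed for the degree-$0$ coefficients of Theorem~\ref{Thm_deg0_inv}. Corollary~\ref{c:3.2} already fixes the shape of the expansion and Theorem~\ref{t:3.1} guarantees $\deg_{q=\zeta^{-1}}\ge 0$, so for a fixed primitive $r$-th root of unity $\zeta$ it suffices to read off the coefficients of the degree-$1$ monomials $(1-P)^2/(1-\zeta q)$ and $(1-P)^3/(1-\zeta q)^2$ and of the degree-$2$ monomial $(1-P)^3/(1-\zeta q)$ in $\tfrac{1}{1-q}J^K(0)$ at order $Q^{rd}$. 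Since Theorem~\ref{Thm_deg0_inv} already gives $c_{rd,\zeta}=\GV^{(3)}_d/(r^2d^2)$ and $f_{rd,\zeta}=-2\GV^{(3)}_d/(r^3d^3)$, the asserted formulas are equivalent to
\[
b_{rd,\zeta}=\GV^{(1)}_d-c_{rd,\zeta},\qquad e_{rd,\zeta}=c_{rd,\zeta}-\tfrac{3}{2}f_{rd,\zeta},\qquad \dd_{rd,\zeta}=rd\,\GV^{(-1)}_d+b_{rd,\zeta}+\tfrac{1}{2}f_{rd,\zeta};
\]
thus the real content is to produce the two genuinely new pieces $\GV^{(1)}_d$ and $rd\,\GV^{(-1)}_d$ and to check that every remaining correction collapses onto the already-known degree-$0$ data.

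First I would expand the $\zeta$-stratum contribution via Proposition~\ref{Prop_Stem_inv} as a sum of stem invariants
\[
\Big[\frac{\Phi^\alpha}{1-q\zeta L^{1/r}},\ \leg_\zeta(L),\dots,\leg_\zeta(L),\ \delta_\zeta(L^{1/r})\Big]^{X_\zeta}_{0,n+2,d},\qquad \delta_\zeta(q)=(1-\zeta^{-1}q)+\tail_\zeta(\zeta^{-1}q),
\]
setting $t=0$ throughout. The case $\zeta=1$ (equivalently $r=1$) is treated through the fake theory via Proposition~\ref{prop_head_fake}; for $\zeta\neq 1$ the induction is driven by the fact that $\leg_\zeta=\Psi^r(\arm|_{t=0})$ and $\tail_\zeta$ are built from $X_{\zeta'}$ contributions of Novikov degree strictly below $M$ once the stem is excised from the source curve, so their $\deg_{q=\zeta^{-1}}$-expansions are already controlled by the inductive hypothesis together with Corollary~\ref{c:3.2}. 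The Adams operation $\Psi^r$, acting by $Q^{e}\mapsto Q^{re}$ and $L\mapsto L^{\otimes r}$, is exactly the mechanism injecting the factors $r^{-2}$ and $r^{-3}$ into the leg corrections, while the stacky cotangent comparison $L^{1/r}$ at the first and last marked points supplies the remaining fractional powers of $r$.

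Second I would isolate the \emph{new} contributions from the primary stem term ($n=0$, no legs, together with the leading summand $(1-\zeta^{-1}L^{1/r})$ of $\delta_\zeta$). This reduces to a two-pointed invariant on $\M_{0,2,d}^X(\zeta)$ which, by Theorem~\ref{thm_coh_fake} and the $\Delta$-operator, becomes a cohomological computation on $X$ in degree $d$. Expanding $(1-\zeta^{-1}L^{1/r})$ and $(1-q\zeta L^{1/r})^{-1}$ in the nilpotent cotangent class and applying the genus-zero divisor and dilaton equations $\langle H\rangle^H_{0,1,e}=e\,\GW_e$, $\langle\psi\rangle^H_{0,1,e}=-2\,\GW_e$, produces sums over subdegrees $e\mid d$ with explicit polynomial weights in $d$ and $r$; the degree-$2$ slot additionally carries the total-degree factor $\int_M H=M=rd$, which is the source of the leading term $rd\,\GV^{(-1)}_d$ in $\dd_{rd,\zeta}$. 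Each subdegree sum is then converted into a weighted Gopakumar--Vafa combination $\GV^{(\gamma)}_d$ by the multicover formula \eqref{e:1.2}, equivalently $\GW_e=\GV^{(3)}_e/e^3$. I would package these two evaluations as the degree-$1$ and degree-$2$ analogues of Lemmas~\ref{lemma_statement1}--\ref{lemma_statement3}, proved in Section~\ref{appen_Kawasaki}. Adding the corrections coming from the $(\zeta^{-1}L-1)$ and $(\zeta^{-1}L-1)^2$ summands of \eqref{e:3.4} and from the $\leg$/$\tail$ insertions, which by the inductive hypothesis reduce to multiples of the known $c_{rd,\zeta}$ and $f_{rd,\zeta}$, then yields the three displayed relations and hence the theorem.

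The step I expect to be the main obstacle is the uniform bookkeeping of the three competing scalings — the $r^{-1}$-powers from the stacky cotangent lines $L^{1/r}$, the $r$-powers injected by $\Psi^r$ on the legs, and the genus-zero divisor/dilaton factors $e$ and $rd$ — because it is precisely their combination that decides whether a given term lands in $\deg_{q=\zeta^{-1}}=1$ or $=2$, and hence whether it contributes to $b_{rd,\zeta}$, $e_{rd,\zeta}$, or $\dd_{rd,\zeta}$. A closely related subtlety is checking that legs and tails carry strictly smaller Novikov degree for \emph{every} $n$, so that the induction actually closes; this follows because excising the stem strictly lowers the degree borne by each arm and tail, but it must be verified uniformly in the number of legs. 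Once this degree accounting is pinned down, the assembly of the pieces is a routine partial-fraction manipulation of \eqref{e:3.4}.
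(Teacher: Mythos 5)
Your overall architecture matches the paper's: induction on the Novikov degree, Proposition~\ref{Prop_Stem_inv} with $t=0$, the localization expansions \eqref{e:3.4}, and reading off the degree-$1$ and degree-$2$ coefficients at $q=\zeta^{-1}$; your algebraic repackaging $b=\GV^{(1)}_d-c$, $e=c-\tfrac32 f$, $\dd=rd\,\GV^{(-1)}_d+b+\tfrac12 f$ is also correct. But the step in which you actually produce the new terms is wrong. You claim that the primary $n=0$ stem term, a two-pointed invariant on $\M^X_{0,2,d}(\zeta)$, yields ``sums over subdegrees $e\mid d$'' via the divisor and dilaton equations, and that this is the source of $\GV^{(1)}_d$ and of $rd\,\GV^{(-1)}_d$. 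It cannot be: the divisor and dilaton equations act at fixed curve class, so the two-point degree-$d$ stem invariant only ever produces $d\,\GW_d=\GV^{(3)}_d/d^2$ and $-2\GW_d=-2\GV^{(3)}_d/d^3$, i.e.\ exactly the $\GV^{(3)}$-type combinations already present in the degree-$0$ coefficients $c_{rd,\zeta}$ and $f_{rd,\zeta}$. The divisor-weighted sums $\GV^{(1)}_d=\sum_{k\mid d}k\,\GV_k$ and $\GV^{(-1)}_d=\sum_{k\mid d}k^{-1}\GV_k$ enter, in Lemmas~\ref{lemma_statement5} and~\ref{lemma_statement6}, through the $n=1$ term of the stem recursion: the \emph{degree-zero} three-point invariants $\bigl[\Phi,\ \leg_{\zeta}(L),\ 1-\zeta^{-1}L^{1/r}\bigr]^{X_{\zeta}}_{0,3,0}$, where $\leg_{\zeta}=\Psi^r\bigl([J^{\KK}_{q=1}]_+-(1-q)\bigr)$ carries, by the induction hypothesis, the coefficients $b$ and $\dd$ of the lower-degree $J^{\KK}$ itself. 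Your proposal inverts this: you dismiss the leg/tail insertions as corrections ``reducing to multiples of the known $c_{rd,\zeta}$ and $f_{rd,\zeta}$,'' when they are in fact the only source of the genuinely new invariants.

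A second, related omission: the type-$C$ twisting from the unramified nodal locus contributes a further degree-zero three-point term $\bigl[\Phi,\ \Psi^r(\cdots\langle\cdot\rangle^{\fake}_{0,1,k}\cdots),\ 1-\zeta^{-1}L^{1/r}\bigr]^{X_{\zeta}}_{0,3,0}$ of the same order as everything else; in Lemma~\ref{lemma_statement5} it supplies $(1-\zeta^{-1})\sum_k Q^{rk}\,k\,\GW_k$, which precisely cancels the $-\tfrac{r^2}{M^2}\GV^{(3)}_{M/r}$ correction coming from the leg and leaves the clean $\GV^{(1)}_{M/r}$. Without this term your coefficients would come out wrong. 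So while the skeleton of your induction is sound (your observation that $\Psi^r$ closes the induction because legs only involve $Q$-degrees at most $M/r<M$ is correct, and the $Q^1$ base case is indeed pure fake theory), the proof as proposed would fail at the evaluation of the one-point stratum invariants $\langle 1\rangle^{X_{\zeta}}_{0,1,M}$, $\langle 1-P\rangle^{X_{\zeta}}_{0,1,M}$ and $\langle \zeta^{-1}L-1\rangle^{X_{\zeta}}_{0,1,M}$.
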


\begin{proof}
It is a purely algebraic computation assuming Lemma \ref{lemma_statement1} to Lemma \ref{lemma_statement6}, where we use induction on Novikov variable. For the initial ($Q^1$) term, it involves only fake theory and can be computed using equation \ref{eqn:J-fake}.

The expansions in \eqref{e:3.4} imply that
\[
\begin{split}
b_{rd,\zeta} &= 5 \, \Coeff\Big( \frac{1}{1-q} J^{\KK}(0) ; Q^{rd} \frac{(1-P)^2}{1-\zeta q} \Big)
\\
&=  \Coeff \Big( \frac{1}{1-q} Q^{rd} (1-P)^2 \langle \frac{(1-P)+(1-P)^2}{1-q L}\rangle^{X_{\zeta}}_{0,1,rd} ; Q^{rd} \frac{(1-P)^2}{1-\zeta q}\Big)
\\
&= \Coeff \Big(  \frac{1}{1-q} \langle \frac{(1-P) + (1-P)^2 - (1-P)(\zeta^{-1} L-1)}{1-\zeta q}\rangle^{X_{\zeta}}_{0,1,rd}   ; \frac{1}{1-\zeta q} \Big)
\\
&\qquad + \Coeff \Big( \frac{1}{1-q} \langle \frac{(1-P)(\zeta^{-1} L-1)}{(1-\zeta q)^2}
\rangle^{X_{\zeta}}_{0,1,rd} ; \frac{1}{1-\zeta q}\Big) 
\\
& = \frac{1}{1-\zeta^{-1}}\langle (1-P) + (1-P)^2 - (1-P)(\zeta^{-1} L-1)\rangle^{X_{\zeta}}_{0,1,rd} 
\\
&\qquad - \frac{\zeta^{-1}}{(1-\zeta^{-1})^2} \langle (1-P)(\zeta^{-1} L-1) \rangle^{X_{\zeta}}_{0,1,rd}
\\
& = \GV^{(1)}_d + \frac{1}{r^2} \Big( \frac{-\GV_d^{(3)}}{d^2} \Big).
\end{split}
\]
In the last equality, Lemmas \ref{lemma_statement1}, \ref{lemma_statement2}, \ref{lemma_statement3}, and Lemma \ref{lemma_statement5} are used. 

For $e_{rd,\zeta}$, we have
\[
\begin{split}
 & e_{rd,\zeta} 
= 5 \, \Coeff\Big( \frac{1}{1-q} J^{\KK}(0) ; Q^{rd} \frac{(1-P)^3}{(1-\zeta q)^2} \Big)
\\
=  &\Coeff \Big( \frac{Q^{rd} (1-P)^3}{1-q}   \langle \frac{1+(1-P)-(1-P)^3}{1-q L}\rangle^{X_{\zeta}}_{0,1,rd} ; Q^{rd} \frac{(1-P)^3}{(1-\zeta q)^2}\Big)
\\
= &\Coeff \Big(  
\langle \frac{ (\zeta^{-1} L-1) + (1-P)(\zeta^{-1} L-1) -2(\zeta^{-1} L-1)^2 }{(1-q)(1-\zeta q)^2}\rangle^{X_{\zeta}}_{0,1,rd}   ; \frac{1}{(1-\zeta q)^2} \Big)
\\
&\qquad + \Coeff \Big( 
\langle \frac{(\zeta^{-1} L-1)^2}{(1-q)(1-\zeta q)^3}
\rangle^{X_{\zeta}}_{0,1,rd} ; \frac{1}{(1-\zeta q)^2}\Big) 
\\
 = &\frac{1}{1-\zeta^{-1}}\langle (\zeta^{-1} L-1) + (1-P)(\zeta^{-1} L-1) -2(\zeta^{-1} L-1)^2 \rangle^{X_{\zeta}}_{0,1,rd} 
\\
&\qquad - \frac{\zeta^{-1}}{(1-\zeta^{-1})^2} \langle (\zeta^{-1} L-1)^2 \rangle^{X_{\zeta}}_{0,1,rd}
\\
= & \frac{1}{r^2}\Big( \frac{\GV_d^{(3)}}{d^2} \Big)  + \frac{1}{r^3} \Big( \frac{3\GV_d^{(3)}}{d^3} \Big).
\end{split}
\]
In the last equality, Lemmas \ref{lemma_statement2}, \ref{lemma_statement3}, and \ref{lemma_statement4} are used.

Finally, for $\dd_{rd,\zeta}$
\[
\begin{split}
& \quad \dd_{rd,\zeta} 
= 5 \, \Coeff\Big( \frac{1}{1-q} J^{\KK}(0) ; Q^{rd} \frac{(1-P)^3}{1-\zeta q} \Big)
\\
&=  \Coeff \Big( \frac{1}{1-q} Q^{rd} (1-P)^3 \langle \frac{1+(1-P)-(1-P)^3}{1-q L}\rangle^{X_{\zeta}}_{0,1,rd} ; Q^{rd} \frac{(1-P)^3}{1-\zeta q}\Big)
\\
&= \Coeff \Big(  
 \langle \frac{[1 -(\zeta^{-1} L-1) +(\zeta^{-1} L-1)^2 ] +(1-P) [1 -(\zeta^{-1} L-1)] }{(1-q)(1-\zeta q)}\rangle^{X_{\zeta}}_{0,1,rd}   ; \frac{1}{1-\zeta q} \Big)
\\
&+ \Coeff \Big( 
 \langle \frac{(\zeta^{-1} L-1)+(1-P)(\zeta^{-1} L-1)-2(\zeta^{-1} L-1)^2}{(1-q)(1-\zeta q)^2} \\
&\qquad \qquad + \frac{(\zeta^{-1} L-1)^2}{(1-q)(1-\zeta q)^3} \rangle^{X_{\zeta}}_{0,1,rd} ; \frac{1}{1-\zeta q}\Big) 
\\
& = \frac{1}{1-\zeta^{-1}}\langle 1+(1-P)-(\zeta^{-1} L-1)-(1-P)(\zeta^{-1} L-1)+(\zeta^{-1} L-1)^2 \rangle^{X_{\zeta}}_{0,1,rd}
\\
& - \frac{\zeta^{-1}}{(1-\zeta^{-1})^2} \langle (\zeta^{-1} L-1)+(1-P)(\zeta^{-1} L-1)-2(\zeta^{-1} L-1)^2
 \rangle^{X_{\zeta}}_{0,1,rd} \\
 &- \frac{\zeta^{-2}}{(1-\zeta^{-1})^3} \langle (\zeta^{-1} L-1)^2
 \rangle^{X_{\zeta}}_{0,1,rd}
\\
& = r d\GV^{(-1)}_d + \GV^{(1)}_d + \frac{1}{r^2} \Big( \frac{-\GV_d^{(3)}}{d^2} \Big)    +\frac{1}{r^3}\Big( \frac{-\GV_d^{(3)}}{d^3} \Big).
\end{split}
\]
For the last equality, Lemmas \ref{lemma_statement2}, \ref{lemma_statement3}, \ref{lemma_statement4}, \ref{lemma_statement5}, and \ref{lemma_statement6} are used. 
\end{proof}

\subsection{Conclusion of the proof}
We have shown that 
\begin{equation} \label{eqn:Conj}
    \begin{split}
    &\frac{1}{1-q}J^K(0) \\
    = & 1+ \frac{(1-P)^2}{5} \sum_{r \geq 1} \sum_{\zeta: {\rm ord}(\zeta)=r }  \sum_{d \geq 1} Q^{rd} \left( \frac{\GV_{d}^{(1)}-\frac{1}{r^2d^2}\GV^{(3)}_d}{1-\zeta q} +\frac{\frac{1}{r^2d^2}\GV^{(3)}_d}{(1-\zeta q)^2}\right)
    \\
    &+ \frac{(1-P)^3}{5}\sum_{r \geq 1} \sum_{\zeta: {\rm ord}(\zeta)=r }  \sum_{d \geq 1} Q^{rd} \cdot \\
   & \cdot \left(\frac{rd \GV_d^{(-1)} +  \GV_d^{(1)}  - \frac{\GV_d^{(3)}}{r^2d^2}  - \frac{\GV_d^{(3)}}{r^3d^3}  }{1-\zeta q} 
   + \frac{\frac{\GV_d^{(3)}}{r^2d^2} + \frac{3\GV^{(3)}_d}{r^3d^3}}{(1-\zeta q)^2} + \frac{\frac{-2 \GV^{(3)}_d}{r^3d^3}}{(1-\zeta q)^3}
    \right) .
    \end{split}
\end{equation}


\begin{theorem} \label{t:3.14}
Equation \ref{eqn:Conj} is equivalent of Conjecture \ref{conjectureJK}.
\end{theorem}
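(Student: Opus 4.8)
The plan is to prove Theorem~\ref{t:3.14} by checking that the two closed forms agree as elements of $\K$ through a comparison of their principal parts at every root of unity. First I would observe that both sides lie in $1+\K_-$: every summand on either side is a scalar multiple of $(1-\zeta q)^{-k}$ or $(1-q^{r})^{-k}$, each of which is finite at $q=0$ and vanishes at $q=\infty$, so both expressions have $\K_+$-projection equal to $1$. Since a rational function lying in $\K_-$ is the sum of its principal parts at its finite poles (the polynomial part, including any constant, is forced to vanish by $f(\infty)=0$), and since all poles on both sides occur only at roots of unity, it suffices to fix a primitive $r$-th root of unity $\zeta$ and verify that the Laurent tails in the local coordinate $u:=1-\zeta q$ at $q=\zeta^{-1}$ coincide, coefficient by coefficient in $Q$.

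On the side of \eqref{eqn:Conj}, a pole at $q=\zeta^{-1}$ is produced by exactly one summand, the one indexed by this $r$ and this $\zeta$, so its principal part in the coefficient of $Q^{rd}$ is read off directly and is governed by the coefficients $b_{rd,\zeta},c_{rd,\zeta},\dd_{rd,\zeta},e_{rd,\zeta},f_{rd,\zeta}$ expressed through the divisor sums $\GV^{(\gamma)}_d=\sum_{e\mid d}e^{\gamma}\GV_e$. On the side of Conjecture~\ref{conjectureJK}, by contrast, $(1-q^{r'})^{-k}$ is singular at $q=\zeta^{-1}$ precisely when $r\mid r'$, so I would write $r'=rm$ with $m\ge1$ and collect the contributions of all these \emph{levels}. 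Since $\zeta^{r'}=1$ we have $q^{r'}=(1-u)^{r'}$ near $q=\zeta^{-1}$, whence all needed local expansions follow from $1-(1-u)^{r'}=r'u\bigl(1-\tfrac{r'-1}{2}u+\tfrac{(r'-1)(r'-2)}{6}u^2-\cdots\bigr)$ by inverting and taking powers; for instance the $u^{-1}$-coefficient of $(1-q^{r'})^{-1}$ is $1/r'$, that of $(1-q^{r'})^{-2}$ is $(r'-1)/(r')^2$, and the full principal part of $(1-q^{r'})^{-3}$ is $(r')^{-3}u^{-3}+\tfrac{3(r'-1)}{2}(r')^{-3}u^{-2}+\tfrac{(r'-1)(2r'-1)}{2}(r')^{-3}u^{-1}$. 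Substituting these into the explicit $a(d',r',q^{r'})$ and $b(d',r',q^{r'})$ yields the principal part of each summand.

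The matching then reduces to an elementary reindexing. For fixed $\zeta$ of order $r$ and fixed total degree $M=rd$, the contributing levels are $r'=rm$ with $m\mid d$ and conjecture-argument $d'=M/r'=d/m$; the clean point is that $d'r'=M$ is constant along the sum, which makes the double-pole comparison transparent. Writing $e:=d/m$, so that $m=d/e$ ranges over the divisors of $d$, each collected coefficient becomes $\sum_{e\mid d}(\text{power of }e)\,\GV_e$, i.e.\ exactly a divisor sum $\GV^{(\gamma)}_d$. For example, the $u^{-1}$-coefficient in the $(1-P)^2$-part of the conjecture is $\tfrac15\sum_{m\mid d}d'(1-(r')^{-2})\GV_{d'}=\tfrac15\sum_{e\mid d}\bigl(e-\tfrac{e^3}{r^2d^2}\bigr)\GV_e=\tfrac15\bigl(\GV^{(1)}_d-\tfrac{1}{r^2d^2}\GV^{(3)}_d\bigr)$, which is precisely $b_{rd,\zeta}/5$ from \eqref{eqn:Conj}; the remaining four coefficient types ($c,\dd,e,f$) match by the same mechanism after the binomial bookkeeping above.

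The main obstacle is purely computational: obtaining the principal parts of $(1-q^{rm})^{-2}$ and especially $(1-q^{rm})^{-3}$ correctly, i.e.\ the second-order term of the inverse power series of $1-(1-u)^{rm}$, since the triple- and double-pole coefficients of the $(1-P)^3$-term both feed on these and involve cancellations (e.g.\ the $u^{-1}$-coefficient of $5b$ has its $1/r'$-part cancel identically, leaving $d'+r'-d'(r')^{-2}-(r')^{-3}$, which reindexes to $\GV^{(1)}_d+rd\,\GV^{(-1)}_d-\tfrac{1}{r^2d^2}\GV^{(3)}_d-\tfrac{1}{r^3d^3}\GV^{(3)}_d$). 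Once these expansions are in hand, the identity is forced by the substitution $e=d/m$ and the definition of $\GV^{(\gamma)}_d$, with no further input.
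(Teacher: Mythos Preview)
Your proposal is correct and follows essentially the same approach as the paper's proof: both sides lie in $1+\K_-$, so it suffices to compare principal parts at each primitive $r$-th root of unity $\zeta$; the conjecture's terms $(1-q^{r'})^{-k}$ contribute precisely when $r\mid r'$, and after the reindexing $r'=rm$, $d'=d/m$, $e=d/m$ the collected coefficients become the divisor sums $\GV_d^{(\gamma)}$. The paper carries out exactly this computation (with $M=rd$ and $k$ in place of your $r'$), using the same Laurent expansions of $(1-q^M)^{-k}$ that you record.
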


\begin{proof}
We use the following expansions
\[
\begin{split}
    \frac{1}{1-q^M} &\equiv_{q=\zeta^{-1}} \frac{1}{ M(1-\zeta q) - \binom{M}{2} (1-\zeta q)^2 + \binom{M}{3}(1-\zeta q)^3 -\dots   }
    \\
    &= \frac{1}{M(1-\zeta q)} + \frac{M-1}{2M} + \frac{M^2-1}{12M}(1-\zeta q) + O((1-\zeta q)^2),
    \\
    \frac{1}{(1-q^M)^2} &\equiv_{q=\zeta^{-1}} \frac{1}{M^2(1-\zeta q)^2} + \frac{M-1}{M^2} \frac{1}{1-\zeta q} + \mbox{ regular terms},
    \\
    \frac{1}{(1-q^M)^3} &\equiv_{q=\zeta^{-1}} \frac{1}{M^3 (1-\zeta q)^3} + \frac{3(M-1)}{2M^3(1-\zeta q)^2} + \frac{2M^2-3M+1}{2M^3(1-\zeta q)} \\
    & \qquad + \mbox{ regular terms},
\end{split}
\]
where $\zeta$ is any (not necessary primitive) $M$-th roots of unity.

Start with the coefficient of $(1-P)^2 Q^M$ in the expression of Conjecture \ref{conjectureJK}. Let $\zeta$ be any primitive $r$-th roots of unity with $r|M$.

\[
\begin{split}
    &5 \, \Coeff \Big( \frac{1}{1-q} J^{\KK}(0) ; (1-P)^2 Q^M \Big)
    \\
    &= \sum_{k|M} \left(\frac{ \frac{M}{k}(k-1) }{1-q^k} + \frac{\frac{M}{k}}{(1-q^k)^2}\right) \GV_{M/k} 
    \\
    &\equiv_{q=\zeta^{-1}} \sum_{k|\frac{M}{r}} \left( \frac{ \frac{M}{rk}(rk-1) }{1-q^{rk}} + \frac{\frac{M}{rk}}{(1-q^{rk})^2}\right) \GV_{M/rk} + \mbox{ regular terms}
    \\
    &\equiv_{q=\zeta^{-1}} \sum_{k|\frac{M}{r}} \left( \frac{ \frac{M}{rk}(rk-1) }{rk(1-\zeta q)} + \frac{ \frac{M}{rk}(rk-1) }{(rk)^2(1-\zeta q)} + \frac{\frac{M}{rk}}{((rk)^2(1-\zeta q)^2}\right) \GV_{M/rk} \\
    & \qquad + \mbox{ regular terms}
    \\
    & \equiv_{q=\zeta^{-1}}  \left(  \frac{  \GV_{M/r}^{(1)} - \frac{1}{M^2} \GV_{M/r}^{(3)}   }{1-\zeta q} +\frac{ \frac{1}{M^2} \GV^{(3)}_{M/r}  }{(1-\zeta q)^2} \right) + \mbox{ regular terms}.
\end{split}
\]
This coincides with the equation $\ref{eqn:Conj}$ by taking $M=rd$.

Similarly for the coefficient of $(1-P)^3Q^M$
\[
\begin{split}
    &5 \, \Coeff \Big( \frac{1}{1-q} J^{\KK}(0) ; (1-P)^3 Q^M \Big)
    \\
    &= \sum_{k|M} \left(\frac{ M+k^2 - \frac{M}{k}-1 }{1-q^k} + \frac{\frac{M}{k}+3}{(1-q^k)^2} - \frac{2}{(1-q^k)^3}\right) \GV_{M/k} 
    \\
    &\equiv_{q=\zeta^{-1}} \sum_{k|\frac{M}{r}}\left(\frac{ M+(rk)^2 - \frac{M}{rk}-1 }{1-q^{rk}} + \frac{\frac{M}{rk}+3}{(1-q^{rk})^2} - \frac{2}{(1-q^{rk})^3}\right) \GV_{M/rk} \\
    & \qquad + \mbox{ regular terms}
    \\
    &\equiv_{q=\zeta^{-1}}  \sum_{k|\frac{M}{r}} \left( \frac{\GV_{M/r}}{ (1-\zeta q) } \Big( 
    (rk) +\frac{M}{rk}  + \frac{-M-1}{(rk)^3} \right.
    \Big) 
    \\
    & \qquad \left. + \frac{\GV_{M/r}}{(1-\zeta q)^2} \Big(   \frac{M+3}{(rk)^3}   \Big) + \frac{\GV_{M/r}}{ (1-\zeta q)^3 } \Big(    \frac{-2}{(rk)^3}  \Big) \right) + \mbox{ regular terms}
    \\
    & \equiv_{q=\zeta^{-1}} \sum_{k|\frac{M}{r}} \left(\frac{ M \GV_{M/r}^{(-1)} +  \GV_{M/r}^{(1)}  - \frac{1}{M^2} \GV_{M/r}^{(3)} - \frac{1}{M^3} \GV_{M/r}^{(3)}  }{1-\zeta q}  \right.
    \\
    &\qquad \left. + \frac{ \frac{\GV_{M/r}^{(3)}}{M^2} + \frac{3\GV^{(3)}_{M/r}}{M^3} }{(1-\zeta q)^2}  + \frac{\frac{-2 \GV^{(3)}_{M/r}}{M^3}}{(1-\zeta q)^3} \right) + \mbox{ regular terms}.
\end{split}
\]
This coincides with the equation \ref{eqn:Conj} by taking $M=rd$. The proof is now complete.

\end{proof}

\section{Some computations of contributions from Kawasaki's strata} \label{appen_Kawasaki}

The main purpose of this section is to prove a few identities needed in the proofs in Section~\ref{section_proof}.

We start with the fake theory:
\begin{equation} \label{eqn:J-fake}
\begin{split}
    \frac{1}{1-q} J^{\fake} (0) = 1 &+ \frac{1}{5}(1-P)^2\sum_d Q^d\frac{d\GW_d}{(1-q)^2}
    \\
    & +\frac{1}{5}(1-P)^3 \sum_d Q^d \left( \frac{(3+d) \GW_d}{(1-q)^2} - \frac{2\GW_d}{(1-q)^3}  \right),
\end{split}
\end{equation}
see Theorem~\ref{thm_coh_fake} and reference therein for more detail.

For the stem theory, let $\zeta$ be a primitive $r$-th roots of unity.
We prove the following statements about the invariants on Kawasaki's strata with $r\geq 2$ in the following order. Same formulas hold true when $r=\zeta=1$.
    \[
    \langle (1-P)^2 \rangle^{X_{\zeta}}_{0,1,M} =0.
    \]
    \[
    \langle (\zeta^{-1} L-1)(1-P) \rangle^{X_{\zeta}}_{0,1,M} = (1-\zeta^{-1}) \Big( \frac{\GV^{(3)}_{M/r}}{M^2} \Big).
    \]
    \[
    \langle (\zeta^{-1} L-1)^2 \rangle^{X_{\zeta}}_{0,1,M} = (1-\zeta^{-1})\Big( \frac{-2\GV^{(3)}_{M/r}}{M^3} \Big).
    \]
    \[
    \langle \zeta^{-1} L-1 \rangle^{X_{\zeta}}_{0,1,M} =  (1-\zeta^{-1})\Big(\frac{-\GV^{(3)}_{M/r}}{M^3}\Big) - \zeta^{-1} \Big(\frac{2\GV^{(3)}_{M/r}}{M^3} \Big).    
    \]
    \[
    \langle 1-P \rangle^{X_{\zeta}}_{0,1,M} = (1-\zeta^{-1}) \GV^{(1)}_{M/r} + \zeta^{-1} \Big( \frac{ \GV^{(3)}_{M/r} }{M^2} \Big)  .
    \]
    \[
    \langle 1 \rangle^{X_{\zeta}}_{0,1,M} =  (1-\zeta^{-1}) M\GV^{(-1)}_{M/r}  + \zeta^{-1} \Big(\frac{\GV^{(3)}_{M/r}}{M^3}\Big).
    \]
Here $\GV^{(\gamma)}_d := \sum_{k|d} k^{\gamma} \GV_k $.
If $(M/r)$ are not integers, $\GV^{(\gamma)}_{M/r}$ are set to 0 by convention.

Recall the notation in \eqref{e:kawasaki}
\[
\begin{split}
    & \Big[ T_1(L), T(L),\dots, T(L), T_{n+2}(L) \Big]^{X_{\zeta}}_{0,n+2,d}
    \\
    := & \int_{[\M_{0,n+2,d}^X(\zeta)]^{\vir}} \td( T_{\M} ) \ch \left( 
    \frac{ \ev_1^* (T_1(L)) \ev_{n+2}^* (T_{n+2}(L)) \prod_{i=2}^{n+1} \ev_i^* T(L) }
    {\Tr (\Lambda^* N^*_{\M})}  \right).
\end{split}
\]
This can be interpreted as \emph{twisted} (cohomological) GW invariants with the twisting class
\[
\td (T_{\M}) \ch \Big( \frac{1}{\Tr (\Lambda^* N^*_{\M})} \Big).
\]
Such twisting classes come from the deformation theory of the moduli of stable maps and consist of three parts. See e.g., \cite[Section~8]{Givental_Tonita_2011}.
\begin{enumerate}
    \item[type $A$.] $\displaystyle \td(\pi_*^K \ev^*(TX))\prod_{k=1}^{r-1} \td_{\zeta^k} (\pi_*^K \ev^*(T_X \otimes \mathbb{C}_{\zeta^k})) $, where $\pi: \mathcal{C} \rightarrow \M$ and $\ev: \mathcal{C} \rightarrow X/ \mathbb{Z}_r$ form the universal (orbifold) stable map diagram. $\mathbb{C}_{\zeta^k}$ is the line bundle over $B\mathbb{Z}_r$ with $g$ acts by $\zeta^k$, and for any line bundle $l$, define the invertible multiplicative characteristic classes
    \[
    \td(l) := \frac{c_1(l)}{1-e^{-c_1(l)}}, \quad \td_{\lambda}(l) := \frac{1}{1-\lambda e^{-c_1(l) }}
    \]
    \item[type $B$.] $\displaystyle \td(\pi_*^K(-L^{-1})) \prod_{k=1}^{r-1} \td_{\zeta^k} (\pi_*^K(-L^{-1} \otimes \ev^* (\mathbb{C}_{\zeta^k}) ))$, where $L=L_{n+3}$ is the universal cotangent line bundle of 
    \[
    \mathcal{C} \cong \M_{0,n+3}^{X/\mathbb{Z}_r ,d} (g, 1,\dots,1,g^{-1},1).
    \]
    \item[type $C$.] $\displaystyle \td^{\vee}(-\pi_*^K i_* \mathcal{O}_{Z_g}) \td^{\vee}(-\pi_*^K i_*\mathcal{O}_{Z_1}) \prod_{i=1}^{k-1}\td^{\vee}_{\zeta^k}(-\pi_*^K i_*\mathcal{O}_{Z_1})$, where $Z_1$ stands for unramified nodal locus, and $Z_g$ stands for ramified one with $i: Z \rightarrow \mathcal{C}$ the embedding of nodal locus. For any line bundle $l$,
    \[
     \td^{\vee}(l) = \frac{-c_1(l)}{1-e^{c_1(l)}}, \quad \td^{\vee}_{\lambda}(l) = \frac{1}{1-\lambda e^{c_1(l) }}.
    \]
\end{enumerate}
We start with the following two observations.
\begin{lemma} \label{l:4.1}
\[
\td (T_{\M}) \ch \Big( \frac{1}{\Tr (\Lambda^* N^*_{\M})} \Big) =: r^{-n} + T_{0,n+2,d}(\zeta),
\]
where $T_{0,n+2,d}(\zeta)\in H^{>0}(\M^X_{0,n+2,d}(\zeta))$, with $0$, $n$, $d$, and $\zeta$ inherited from $\M = \M^X_{0,n+2,d}(\zeta)$.
\end{lemma}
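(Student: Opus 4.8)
The plan is to reduce the statement to a pure rank (equivariantly, character) computation, since only the degree-zero component in $H^\bullet(\M^X_{0,n+2,d}(\zeta))$ is being isolated. Recall that for a line bundle $l$ one has $\td(l) = 1 + O(c_1)$ and $\td^{\vee}(l) = 1 + O(c_1)$, so the non-equivariant Todd factors contribute $1$ in degree zero; whereas $\td_{\zeta^k}(l) = (1-\zeta^k)^{-1} + O(c_1)$ and $\td^{\vee}_{\zeta^k}(l) = (1-\zeta^k)^{-1} + O(c_1)$, so an equivariant Todd factor applied to a (virtual) bundle of rank $\rho$ contributes $(1-\zeta^k)^{-\rho}$ in degree zero. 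First I would apply this termwise to the type $A$, $B$, $C$ factors recorded above, so that the degree-zero part of the whole twisting class becomes $\prod_{k=1}^{r-1}(1-\zeta^k)^{-R_k}$, where $R_k$ is the total virtual rank of the $\zeta^k$-eigenbundle of the normal complex,
\[
R_k = \operatorname{rk}\pi_*^K \ev^*(T_X \otimes \mathbb{C}_{\zeta^k}) + \operatorname{rk}\pi_*^K(-L^{-1}\otimes \ev^*\mathbb{C}_{\zeta^k}) + \operatorname{rk}\big(-\pi_*^K i_*\O_{Z_1}\big)_{\zeta^k}.
\]

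The heart of the argument is then to show that $R_k = n$ for every $k=1,\dots,r-1$, independently of $k$. I would compute each $R_k$ by (orbifold) Riemann--Roch on the universal covering curve, keeping track of the $\Z_r$-representation on each piece. As a sanity check that fixes the total, the virtual dimension of the ambient space $\M_{0,nr+2}(X,rd)$ is $nr+2$ and that of the stem space $\M^X_{0,n+2,d}(\zeta)$ is $n+2$ (both computed from the Calabi--Yau condition $c_1(T_X)=0$), so the virtual normal bundle has rank $(nr+2)-(n+2)=n(r-1)=\sum_{k=1}^{r-1}R_k$, consistent with $R_k\equiv n$. The representation-theoretic input is that the $n$ orbits of unramified nodes (type $C$, along $Z_1$) each contribute the nontrivial part $\sum_{k=1}^{r-1}\zeta^k$ of the regular representation of $\Z_r$ — this is exactly where Lemma~\ref{lemma_Adams_cyclic} on cyclic permutations enters — supplying $n$ to each $R_k$; one must then verify that the map-deformation contribution (type $A$) and the cotangent-line contribution (type $B$) combine to have no net moving part in each eigenspace, so that these node-smoothings account for all of $R_k$.

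Finally I would assemble the result using the cyclotomic identity $\prod_{k=1}^{r-1}(1-\zeta^k) = r$, obtained from $\frac{x^r-1}{x-1} = \prod_{k=1}^{r-1}(x-\zeta^k)$ evaluated at $x=1$. With $R_k = n$ this gives $\prod_{k=1}^{r-1}(1-\zeta^k)^{-n} = r^{-n}$ for the degree-zero part, while everything of positive cohomological degree is collected into $T_{0,n+2,d}(\zeta)\in H^{>0}$, proving the claim. The case $r=\zeta=1$ is immediate: there is then no moving part, the twisting class is $\td(T_{\M})$, and its degree-zero part is $1 = r^{-n}$.

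The main obstacle is the eigenspace rank computation $R_k = n$: unlike the cyclotomic bookkeeping of the last paragraph, this requires an honest equivariant Euler-characteristic calculation on the orbifold cover together with a careful check that the type $A$ and type $B$ contributions to the moving part cancel, leaving only the node-smoothing contribution. The virtual-dimension count pins down $\sum_k R_k$, but establishing the $k$-independence (constancy) of $R_k$ is the delicate step on which the clean value $r^{-n}$ rests.
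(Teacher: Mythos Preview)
Your approach is correct and is essentially the same as the paper's: isolate the degree-zero part of the twisting class, observe that only the equivariant (normal-bundle) factors $\td_{\zeta^k}$ contribute nontrivially there, show that each $\zeta^k$-eigenpiece has virtual rank $n$, and conclude via $\prod_{k=1}^{r-1}(1-\zeta^k)=r$. The paper's proof is considerably terser---it simply asserts that the normal-bundle twisting has the form $\prod_{k=1}^{r-1}\td_{\zeta^k}\big(\pi^K_*(E\otimes\ev^*\mathbb{C}_{\zeta^k})\big)$ with $E$ of virtual rank $n$ and reads off the constant $r^{-n}$---so your identification of the $R_k=n$ step as the substantive point, together with the sketch that types $A$ and $B$ contribute no net moving rank (using the CY3 condition) while the $n$ unramified node-smoothings give exactly one copy of the nontrivial regular representation each, is a more explicit version of what the paper leaves implicit.
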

\begin{proof}
Note that $N_{\M}$ has virtual dimension $(r-1)n$ since $\M$ is considered as Kawasaki strata in $\M^X_{0,nr+2,d}$. 

Only the twisting classes on normal bundle give nontrivial constant. It is of the following form:
\[
\prod_{k=1}^{r-1} \td_{\zeta^k} \Big( \pi^K_*(  E \otimes \ev^* (\mathbb{C}_{\zeta^k}) )\Big),
\]
where $E$ is a virtual bundle of rank $n$. The constant term is given by
\[
\Big( \prod_{k=1}^{r-1} \frac{1}{1-\zeta^k} \Big)^n = r^{-n}.
\]

\end{proof}
\begin{lemma} \label{lemma_vanishingH^2}
For any Calabi-Yau threefolds $X$, if $\deg_{\mathbb{C}} \phi_1 \geq 2$ then
\[
\langle \tau_{k_1}(\phi_1),\dots, \tau_{k_n}(\phi_n) \rangle_{g,n,\beta\neq 0}^{\tw} =0,
\]
where $\tw$ denotes cohomological GW invariants with twistings by any combinations of the three types defined above.
\end{lemma}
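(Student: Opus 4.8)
The plan is to prove the vanishing by a dimension count reinforced by an induction on the number of marked points $n$, the inductive step being powered by the string, dilaton and divisor equations. Two facts drive the argument. First, because $X$ is Calabi--Yau, $c_1(TX)=0$ and $\vdim \M_{g,n}(X,\beta)=n$. Second, by Lemma~\ref{l:4.1} the twisting class has non-negative cohomological degree, with degree-zero part the constant $r^{-n}$; consequently the integrand $\td(T_{\M})\,\ch(\dots)\prod_i \ev_i^*(\phi_i)\psi_i^{k_i}$ has complex degree at least $\sum_i(\deg_{\mathbb{C}}\phi_i+k_i)$, and its integral against $[\M_{g,n}(X,\beta)]^{\vir}$ can be nonzero only if a component of degree exactly $n$ survives.

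The base case $n=1$ is immediate: the integrand has degree $\ge \deg_{\mathbb{C}}\phi_1\ge 2>1=\vdim$, so the invariant vanishes. For the inductive step I distinguish two cases. If every insertion satisfies $\deg_{\mathbb{C}}\phi_i+k_i\ge 2$ --- equivalently, none of them is of the reducible type $\tau_0(1)$, $\tau_1(1)$ or $\tau_0(\text{divisor})$ --- then the integrand has degree $\ge 2n>n$ (the twisting only raises the degree), and the invariant vanishes for purely dimensional reasons. Otherwise some insertion with index $\ne 1$ is reducible, and I would apply the string, dilaton or divisor equation to rewrite the invariant as a combination of invariants with $n-1$ marked points. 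Each resulting term still carries an insertion of complex degree $\ge 2$, namely $\phi_1$ (possibly cupped with a divisor in the divisor-equation corrections, which only increases its degree), and hence vanishes by the induction hypothesis.

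The main obstacle is that the string, dilaton and divisor equations must be available for the \emph{twisted} invariants, not only for honest cohomological Gromov--Witten invariants. The twisting classes of types $A$, $B$ and $C$ are built from $K$-theoretic pushforwards $\pi_*^{K}$ of bundles pulled back from $X$ along $\ev$ and from the cotangent lines, so in principle one must check that they transform compatibly under the forgetful morphism $\M_{g,n}(X,\beta)\to\M_{g,n-1}(X,\beta)$, up to the usual comparison of $\psi$-classes and the boundary sections. The cleanest way to finesse this is to expand each twisting class via Grothendieck--Riemann--Roch into tautological classes --- $\psi$- and $\kappa$-classes, the pullbacks $\ev^*c_i(TX)$, and boundary classes --- thereby rewriting each twisted invariant as a finite combination of untwisted cohomological invariants of $X$. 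The classical equations then apply term by term, and one only has to track that the degree-$\ge 2$ insertion $\phi_1$ survives each boundary splitting: it stays on a single factor, forcing that factor to vanish by the induction. Throughout I would restrict to insertions in even cohomology, which loses nothing for the applications since all insertions arise from powers of the hyperplane class and from $\ev^*c_i(TX)$, and this is exactly what makes the bound $\deg_{\mathbb{C}}\phi_i+k_i\ge 2$ hold for every non-reducible insertion.
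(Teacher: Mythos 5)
Your strategy has a genuine gap, and it is worth contrasting with the argument the paper actually uses, which needs none of the machinery you invoke. The paper simply pushes forward along the forgetful map $\pi_1\colon \M_{g,n}(X,\beta)\to\M_{g,1}(X,\beta)$: since $\ev_1^*\phi_1=\pi_1^*\ev_1^*\phi_1$, the projection formula gives
\[
\int_{[\M_{g,n}(X,\beta)]^{\vir}} T\,\prod_i\psi_i^{k_i}\ev_i^*\phi_i
=\int_{[\M_{g,1}(X,\beta)]^{\vir}} (\ev_1^*\phi_1)\,(\pi_1)_*\Bigl(T\,\psi_1^{k_1}\prod_{i\ge 2}\psi_i^{k_i}\ev_i^*\phi_i\Bigr),
\]
and since $\vdim\M_{g,1}(X,\beta)=1$ while $\deg_{\mathbb{C}}\phi_1\ge 2$ and the pushforward is a class of non-negative degree, the integral vanishes. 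Nothing at all needs to be known about the twisting class $T$ beyond its being a cohomology class; there is no induction, no case analysis, and no appeal to string, dilaton or divisor equations.

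The gap in your proposal is precisely the step you flag as the ``main obstacle'' and then do not close. The string, dilaton and divisor equations are \emph{not} available for the twisted invariants in their classical form, and your proposed fix --- expanding the twisting classes by Grothendieck--Riemann--Roch into tautological classes and applying the classical equations ``term by term'' --- does not work as stated: the GRR expansion of $\pi_*^K$ of the relevant bundles produces $\kappa$-classes and boundary pushforwards, and the classical string/dilaton/divisor equations do not apply term by term to correlators containing $\kappa$-classes or boundary classes (each acquires its own correction terms under the forgetful map, and boundary classes force a splitting of the correlator). Worse, in the boundary-splitting terms the component carrying $\phi_1$ may have curve class $\beta=0$, where your induction hypothesis (stated only for $\beta\neq 0$) is unavailable and the vanishing is in fact false in general, since $\M_{g,m}(X,0)\cong \M_{g,m}\times X$ can absorb a degree-$2$ class. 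So the inductive step is not justified. If you want to keep a self-contained argument, replace the induction entirely by the single projection-formula step above.
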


\begin{proof}
%
Let 
\[
 \pi_1 : \M_{g,n}(X, \beta) \to \M_{g,1} (X, \beta)
\]
be the forgetful map forgetting the last $n-1$ marked points and $T\in H^*(\M_{g,n}(X,\beta))$ be the twisting class.
By projection formula
\[
\begin{split}
\int_{[\M_{g,n}(X, \beta)]^{\vir}} T \ &\prod_{i=1}^{n} \Big(\psi_i^{k_i} \ev_i^*\phi_i\Big) 
\\
&= \int_{[\M_{g,1}(X, \beta)]^{\vir}} (\ev_1^* \phi_1 ) \ (\pi_1)_*\left( T \ \psi_1^{k_1}\prod_{i=2}^{n} \psi_i^{k_i} \ev_i^*\phi_i \right) .
\end{split}
\] 
Since $[\M_{g,1}(X,\beta)]^{\vir}$ has virtual dimension 1, while $\deg_{\mathbb{C}}(\phi_i) \geq 2$, the last equation must vanish.
\end{proof}

\begin{lemma} \label{lemma_statement1}
\[
    \langle (1-P)^2 \rangle^{X_{\zeta}}_{0,1,M} =0.
\]
\end{lemma}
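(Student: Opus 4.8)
The plan is to recognize the left-hand side as a twisted \emph{cohomological} Gromov--Witten invariant carrying an insertion of positive complex degree, and then to apply the vanishing Lemma~\ref{lemma_vanishingH^2}. By Proposition~\ref{Prop_Stem_inv} together with the notation~\eqref{e:kawasaki}, the stratum contribution $\langle (1-P)^2 \rangle^{X_\zeta}_{0,1,M}$ is an integral over the relevant stem moduli space against the twisting class $\td(T_\M)\,\ch\big(\Tr(\Lambda^* N^*_\M)^{-1}\big)$, i.e.\ a cohomological invariant of the quintic twisted by a combination of the types $A$, $B$, $C$ listed after~\eqref{e:kawasaki}, with $M = rd$. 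If $r \nmid M$ there is no stem of the prescribed degree, the moduli space is empty, and the invariant is $0$ by convention; so I would assume $d = M/r \in \Z_{\geq 1}$, whence the underlying curve class $d[\mathrm{line}]$ is nonzero.

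I would then read off the complex degree of the insertion. Since $P = \O(-1)|_X$ gives $\ch(P) = e^{-H}$ with $H$ the hyperplane class, and $H^4 = 0$ on the quintic, we have
\[
\ch\big((1-P)^2\big) = \big(1 - e^{-H}\big)^2\big|_X = H^2 - H^3,
\]
each term of which has complex degree $\geq 2$. As the insertion enters~\eqref{e:kawasaki} only through $\ch(\ev_1^*((1-P)^2))$, the ``space part'' at the marked point is, componentwise, a pullback $\ev_1^*\phi$ with $\deg_\C \phi \geq 2$; the powers of the cotangent line $L$ contribute only descendant $\psi$-factors, which are harmless for the degree count.

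Finally I would invoke Lemma~\ref{lemma_vanishingH^2}. For a Calabi--Yau threefold the projection-formula argument forgetting all but the marked point bearing $(1-P)^2$ reduces the twisted integral to one over $[\M_{0,1}(X, d[\mathrm{line}])]^{\vir}$, which has virtual dimension $1$; since the surviving insertion $\ev_1^*\phi$ has complex degree $\geq 2 > 1$, the integral vanishes. Applied componentwise to $\ch((1-P)^2)$ this gives $\langle (1-P)^2 \rangle^{X_\zeta}_{0,1,M} = 0$. The identical reasoning handles the degenerate case $r = \zeta = 1$, where the normal-bundle twisting disappears and one is left with the fake (head) twisting, still covered by Lemma~\ref{lemma_vanishingH^2}.

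The step I expect to demand the most care is the first: making the symbol $\langle (1-P)^2 \rangle^{X_\zeta}_{0,1,M}$ precise as a twisted invariant of the form to which Lemma~\ref{lemma_vanishingH^2} applies --- in particular matching the orbifold stem setup (with its second, ``last'' marked point and possible leg contributions) to the hypotheses of a lemma stated for $\M_{g,n}(X,\beta)$, so that one may legitimately isolate $(1-P)^2$ as a single insertion of complex degree $\geq 2$ at a fixed marked point. Once that is in place the degree count is immediate, and it is precisely the positivity of the complex degree of $(1-P)^2$ --- as opposed to the classes $(\zeta^{-1}L-1)^j$ in the companion Lemmas~\ref{lemma_statement2}--\ref{lemma_statement6}, whose space parts involve the identity and therefore do \emph{not} vanish --- that singles out this case.
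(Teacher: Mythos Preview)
Your proposal is correct and follows essentially the same route as the paper: the paper's proof is the single line ``It is a consequence of Lemma~\ref{lemma_vanishingH^2} since $\deg_{\mathbb{C}}\ch(1-P)^2\geq 2$,'' and you have simply unpacked that line with the explicit computation $\ch((1-P)^2)=H^2-H^3$ and some discussion of why the stem invariant falls under the scope of Lemma~\ref{lemma_vanishingH^2}. The technical concern you flag---identifying the orbifold stem invariant with a twisted invariant on $\M_{0,n}(X,\beta)$---is legitimate but is precisely what the paper absorbs into the phrase ``twistings by any combinations of the three types defined above'' in Lemma~\ref{lemma_vanishingH^2}; since $\mathbb{Z}_r$ acts trivially on $X$, the stem moduli space differs from the ordinary one only by a gerbe factor, and the projection-formula argument goes through verbatim.
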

\begin{proof}
It is a consequence of Lemma~\ref{lemma_vanishingH^2} since $ \deg_{\mathbb{C}} \ch (1-P)^2 \geq 2$.
\end{proof}

The following expression of leg and tail contributions will be used.
\begin{equation} \label{eqn_leg_tail}
    \begin{split}
{\rm leg}_{\zeta}(q) &= \Psi^r \Big(  [J^{\KK}_{q=1}]_+ - (1-q)  \Big)
\\
&= \Psi^r \Big(  (1-P)^2 (\cdots) +(1-P)^3(\cdots)\Big)
\\
\delta_{\zeta}(q) &= (1-\zeta^{-1}q) + \tail_{\zeta}(\zeta^{-1}q)
\\
&= (1-\zeta^{-1}q) + (1-P)^2(\cdots) + (1-P)^3 (\cdots).
\end{split}
\end{equation}

\begin{lemma} \label{lemma_statement2}
 \[
    \langle (\zeta^{-1} L-1)(1-P) \rangle^{X_{\zeta}}_{0,1,M} = (1-\zeta^{-1}) \Big( \frac{\GV^{(3)}_{M/r}}{M^2} \Big).
\]
\end{lemma}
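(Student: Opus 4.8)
The plan is to reduce the single-insertion Kawasaki bracket to a genus-zero, two-pointed \emph{stem} invariant and then isolate its leading cohomological term. Applying Proposition~\ref{Prop_Stem_inv} with $t=0$, $n=0$ and a single original marked point, the identity there identifies $\langle (\zeta^{-1}L-1)(1-P)\rangle^{X_\zeta}_{0,1,M}$ with the stem integral \eqref{e:kawasaki} over $\M_{0,2,d}^X(\zeta)$ with $d=M/r$, where the original cotangent line $L$ is replaced by $\zeta L^{1/r}$ (the eigenvalue $\zeta$ times the orbifold cotangent line of the stem). Under this substitution $(\zeta^{-1}L-1)(1-P)$ becomes the unipotent class $(L^{1/r}-1)(1-P)$ sitting at the ramified first point, while the last point carries $\delta_\zeta(L^{1/r}) = (1-\zeta^{-1}L^{1/r}) + \mathrm{tail}_\zeta(\zeta^{-1}L^{1/r})$, whose tail part begins in $(1-P)^2$ by \eqref{eqn_leg_tail}. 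So the bracket is the twisted cohomological integral of these two evaluation classes against the twisting class $\td(T_\M)\ch(1/\Tr(\Lambda^* N^*_\M))$.

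Next I would run a dimension count. The stem space $\M_{0,2,d}^X(\zeta)=\M_{0,2}([X/\Z_r],d;(g,g^{-1}))$ has $\vdim_\C = 2$, since $\Z_r$ acts trivially on the Calabi--Yau threefold $X$ (all ages vanish) so its virtual dimension agrees with that of $\M_{0,2}(X,d)$. By Lemma~\ref{l:4.1} the twisting class is $1$ plus a class of positive cohomological degree. Now $\ch((L^{1/r}-1)(1-P)) = (e^{\ell}-1)(1-e^{-H_1})$, with $\ell := c_1(L^{1/r})$ and $H_1 := \ev_1^* H$, has lowest complex degree $2$, namely $\ell H_1$; and $\ch(\delta_\zeta(L^{1/r}))$ has constant term $1-\zeta^{-1}$ with its remaining terms of complex degree $\geq 1$, the tail part of degree $\geq 2$. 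Matching against $\vdim_\C = 2$ forces the only surviving contribution to be $(1-\zeta^{-1})\int_{[\M_{0,2,d}^X(\zeta)]^{\vir}} \ell\, H_1$; every term involving the tail, a higher power of $L^{1/r}-1$, or the positive-degree part of the twisting class exceeds degree $2$ and drops out. Unlike Lemma~\ref{lemma_statement1}, the primary part here is $1-P$, whose Chern character has degree $1$, so Lemma~\ref{lemma_vanishingH^2} does not apply directly and this finer count is genuinely needed.

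Finally I would evaluate $\int_{[\M_{0,2,d}^X(\zeta)]^{\vir}}\ell\,H_1$. Using that $\ell = c_1(L^{1/r}) = \tfrac1r\,\psi_1^{\mathrm{coarse}}$ (the cotangent comparison in the sketch of Proposition~\ref{Prop_Stem_inv}) together with the stacky/gerbe factor $1/r$ relating the orbifold integral to the coarse integral over $\M_{0,2}(X,d)$, this reduces to $\tfrac1{r^2}\int_{[\M_{0,2}(X,d)]^{\vir}}\psi_1\,\ev_1^*H = \tfrac1{r^2}\langle \tau_1(H)\,\tau_0(1)\rangle^H_{0,2,d}$. The string equation removes the second point, giving $\langle\tau_1(H)\tau_0(1)\rangle^H_{0,2,d} = \langle H\rangle^H_{0,1,d} = d\,\GW_d$. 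Hence the bracket equals $(1-\zeta^{-1})\,\tfrac{d\,\GW_d}{r^2}$, and rewriting via the M\"obius relation \eqref{e:1.2}, which yields $\GV^{(3)}_d = \sum_{k\mid d}k^3\GV_k = d^3\GW_d$, we obtain $(1-\zeta^{-1})\tfrac{\GV^{(3)}_d}{r^2 d^2} = (1-\zeta^{-1})\tfrac{\GV^{(3)}_{M/r}}{M^2}$, as claimed; when $r\nmid M$ there is no degree-$d$ stem and the bracket vanishes, matching the convention $\GV^{(3)}_{M/r}=0$.

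The main obstacle I expect is the precise bookkeeping of the two factors of $1/r$: the rescaling $\ell = \tfrac1r\psi^{\mathrm{coarse}}$ and the factor $1/r$ coming from the order-$r$ generic symmetry, together with a clean justification that the orbifold virtual integral over $\M_{0,2}([X/\Z_r],d;(g,g^{-1}))$ reduces to the coarse genus-zero invariant of $X$. Getting either factor wrong changes the power of $M$ in the denominator, so this is where care is required; the dimension count and the string-equation reduction are then routine.
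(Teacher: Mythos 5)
Your proposal is correct and follows essentially the same route as the paper: apply Proposition~\ref{Prop_Stem_inv} with $t=0$, use the dimension count (with Lemma~\ref{l:4.1}) to discard the legs, the tail, and the positive-degree part of the twisting class, extract the constant $1-\zeta^{-1}$ from $\delta_\zeta$, and reduce the surviving term $\tfrac{1}{r^2}\int c_1(L_1)\ev_1^*H = \tfrac{1}{r^2}\,d\,\GW_d$ to $\GV^{(3)}_{M/r}/M^2$ via $\GV^{(3)}_d = d^3\GW_d$. Your bookkeeping of the two factors of $1/r$ (one from $\ch(L_1^{1/r}-1)$, one from the virtual class of the stem stratum) matches the paper's, which attributes the second factor to the virtual fundamental class.
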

\begin{proof}
Proposition \ref{Prop_Stem_inv} with input $t=0$ gives:
\[
\begin{split}
    &\sum_{d} Q^d\langle (\zeta^{-1} L-1)(1-P) \rangle^{X_{\zeta}}_{0,1,d} 
    \\ 
    & \hspace{1cm} =\sum_{n,d} \frac{Q^{rd}}{n!}  \Big[ (L^{1/r}-1)(1-P) , \leg_{\zeta}(L), \dots, \leg_{\zeta}(L), \delta_{\zeta}(L^{1/r}) \Big]^{X_{\zeta}}_{0,2+n.d}.
\end{split}
\]
$\leg_{\zeta}(q)$ and $\delta_{\zeta}(q)$ can be expanded as in equation (\ref{eqn_leg_tail}).
The invariant vanishes unless the dimension of moduli space and the degree of classes match. We have:
\[
\begin{split}
  & \sum_d Q^d\langle (\zeta^{-1} L-1)(1-P) \rangle^{X_{\zeta}}_{0,1,d} \\
 =  & \sum_{d} Q^{rd} \Big[ (L^{1/r}-1)(1-P),1-\zeta^{-1}L^{1/r} \Big]^{X_{\zeta}}_{0,2,d} 
\\
=  & (1-\zeta^{-1}) \sum_{d} Q^{rd} \int_{ [\M_{0,2,d}^X(\zeta)]^{\vir} } \ch\Big( (L_1^{1/r}-1) \ev_1^*(1-P) ) \Big)
\\
= & \frac{(1-\zeta^{-1})}{r} \sum_{d} Q^{rd} \int_{ [\M_{0,2,d}^X(\zeta)]^{\vir} } c_1(L_1) \ev_1^* (H)
\\
=  & \frac{(1-\zeta^{-1})}{r^2} \sum_d Q^{rd} (d\GW_d) = \frac{(1-\zeta^{-1})}{r^2}\sum_d Q^{rd} \Big( \frac{\GV^{(3)}_d}{d^2} \Big) .
\end{split}
\]
The extra power of $r$ in the denominator comes from the virtual fundamental class. 
Compare the coefficient of $Q^M$ gives the result.
\end{proof}

\begin{lemma} \label{lemma_statement3}
    \[
    \langle (\zeta^{-1} L-1)^2 \rangle^{X_{\zeta}}_{0,1,M} = (1-\zeta^{-1})\Big( \frac{-2\GV^{(3)}_{M/r}}{M^3} \Big).
    \]
\end{lemma}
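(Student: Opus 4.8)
The plan is to follow verbatim the strategy of the proof of Lemma~\ref{lemma_statement2}, replacing the insertion $(\zeta^{-1}L-1)(1-P)$ by the pure descendant insertion $(\zeta^{-1}L-1)^2$. First I would apply Proposition~\ref{Prop_Stem_inv} with input $t=0$ to rewrite the left-hand side as a sum over stem moduli spaces,
\[
\sum_d Q^d \langle (\zeta^{-1}L-1)^2 \rangle^{X_{\zeta}}_{0,1,d}
= \sum_{n,d} \frac{Q^{rd}}{n!}
\Big[ (L^{1/r}-1)^2, \leg_{\zeta}(L), \dots, \leg_{\zeta}(L), \delta_{\zeta}(L^{1/r}) \Big]^{X_{\zeta}}_{0,n+2,d},
\]
where the ramified first marked point converts $\zeta^{-1}L \mapsto L^{1/r}$, so that $(\zeta^{-1}L-1)^2 \mapsto (L^{1/r}-1)^2$, exactly as in Lemma~\ref{lemma_statement2}.

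Next I would run the dimension-count / vanishing argument to collapse the right-hand side to a single term. The stem space $\M^X_{0,n+2,d}(\zeta)$ has virtual dimension $n+2$. By \eqref{eqn_leg_tail} each factor $\leg_{\zeta}(L)$ carries a cohomology class of complex degree $\geq 2$ (it is built from $\Psi^r((1-P)^2)$ and $\Psi^r((1-P)^3)$), and in $\delta_{\zeta}(L^{1/r}) = (1-\zeta^{-1}L^{1/r}) + (1-P)^2(\cdots) + (1-P)^3(\cdots)$ every term except the leading $(1-\zeta^{-1}L^{1/r})$ likewise has complex degree $\geq 2$. Meanwhile $\ch\big((L^{1/r}-1)^2\big)$ begins in complex degree $2$, and by Lemma~\ref{l:4.1} the twisting class contributes a nonzero scalar only in degree $0$. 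A minimal-degree count then gives a total integrand degree of at least $2 + 2n$ against virtual dimension $n+2$; since $2+2n > n+2$ for $n\geq 1$, only $n=0$ survives (Lemma~\ref{lemma_vanishingH^2} disposes of the positive-degree cohomology insertions), and the leading piece $1-\zeta^{-1}L^{1/r}$ of $\delta_{\zeta}$ is selected. This leaves the single $2$-pointed integral $\sum_d Q^{rd}\big[(L^{1/r}-1)^2, 1-\zeta^{-1}L^{1/r}\big]^{X_{\zeta}}_{0,2,d}$.

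Finally I would evaluate this integral by extracting leading Chern characters. Since $\ch(L^{1/r}-1) = c_1(L_1)/r + \cdots$, the first insertion contributes its leading term $c_1(L_1)^2/r^2$, the second insertion contributes its constant term $(1-\zeta^{-1})$, and the twisting class contributes $1$ (Lemma~\ref{l:4.1} with $n=0$). Exactly as in Lemma~\ref{lemma_statement2}, passing from the stem integral to honest genus-$0$ GW invariants of the quintic produces a further factor $1/r$ from the virtual fundamental class of the $r$-fold cover, so that
\[
\int_{[\M^X_{0,2,d}(\zeta)]^{\vir}} c_1(L_1)^2
= \frac{1}{r}\,\langle \tau_2(1), \tau_0(1)\rangle^{H}_{0,2,d}
= \frac{1}{r}\,\langle \tau_1(1)\rangle^{H}_{0,1,d}
= \frac{-2\GW_d}{r},
\]
using the string equation and then the dilaton value $\langle \psi\rangle^{H}_{0,1,d} = -2\GW_d$. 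Collecting the factors yields $(1-\zeta^{-1})(-2\GW_d)/r^3$; substituting $\GW_d = \GV^{(3)}_d/d^3$ (the M\"obius inversion of \eqref{e:1.2}) and $M = rd$ gives the claimed $(1-\zeta^{-1})\big(-2\GV^{(3)}_{M/r}/M^3\big)$. I expect the main obstacle to be the careful bookkeeping of the factor $1/r$ from the virtual class together with the precise justification that only the $n=0$, leading-$\delta_{\zeta}$ term survives; once these are settled, the remaining descendant computation is routine.
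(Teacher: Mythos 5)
Your proposal is correct and follows essentially the same route as the paper: apply Proposition~\ref{Prop_Stem_inv} with $t=0$, collapse to the single $2$-pointed stem integral $\sum_d Q^{rd}\big[(L^{1/r}-1)^2,\,1-\zeta^{-1}L^{1/r}\big]^{X_{\zeta}}_{0,2,d}$ by the degree/dimension count, extract the leading term $c_1(L_1)^2/r^2$ times the constant $(1-\zeta^{-1})$, and use the extra $1/r$ from the virtual class together with $\langle\psi\rangle^H_{0,1,d}=-2\GW_d$ and $\GW_d=\GV^{(3)}_d/d^3$. The paper's own proof is exactly this computation (stated tersely as ``similar to Lemma~\ref{lemma_statement2}''); your more explicit justification of why only the $n=0$, leading-$\delta_{\zeta}$ term survives is consistent with it.
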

\begin{proof}
The computation is similar to Lemma \ref{lemma_statement2}.
\[
\begin{split}
&\sum_{d} Q^d \langle (\zeta^{-1} L-1)^2 \rangle^{X_{\zeta}}_{0,1,d} \\
& = \sum_{d} Q^{rd} \Big[ (L^{1/r}-1)^2,1-\zeta^{-1}L^{1/r} \Big]^{X_{\zeta}}_{0,2,d} 
\\
& = (1-\zeta^{-1}) \sum_{d} Q^{rd} \int_{ [\M_{0,2,d}^X(\zeta)]^{\vir} } \ch\Big( (L_1^{1/r}-1)^2 \Big)
\\
&= \frac{(1-\zeta^{-1})}{r^2} \sum_{d} Q^{rd} \int_{ [\M_{0,2,d}^X(\zeta)]^{\vir} } c_1(L_1)^2
\\
&= \frac{(1-\zeta^{-1})}{r^3} \sum_d Q^{rd} (-2\GW_d) = \frac{(1-\zeta^{-1})}{r^3} \sum_d Q^{rd} \Big( \frac{-2 \GV^{(3)}_d}{d^3} \Big).
\end{split}
\]
Comparing the coefficient of $Q^M$ gives the result.
\end{proof}

\begin{lemma}\label{lemma_statement4}
    \[
    \langle \zeta^{-1} L-1 \rangle^{X_{\zeta}}_{0,1,M} =  (1-\zeta^{-1})\Big(\frac{-\GV^{(3)}_{M/r}}{M^3}\Big) - \zeta^{-1} \Big(\frac{2\GV^{(3)}_{M/r}}{M^3} \Big).    
    \]
\end{lemma}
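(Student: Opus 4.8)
The plan is to follow the template of Lemmas~\ref{lemma_statement2} and \ref{lemma_statement3}, applying Proposition~\ref{Prop_Stem_inv} with input $t=0$ to the generating series $\sum_d Q^d \langle \zeta^{-1}L-1\rangle^{X_\zeta}_{0,1,d}$. Exactly as in those lemmas, passage to the stem sends the first insertion $\zeta^{-1}L-1$ to $L^{1/r}-1$, so that
\[
\sum_d Q^d \langle \zeta^{-1}L-1\rangle^{X_\zeta}_{0,1,d} = \sum_{n,d}\frac{Q^{rd}}{n!}\Big[ L^{1/r}-1,\ \leg_\zeta(L),\dots,\leg_\zeta(L),\ \delta_\zeta(L^{1/r})\Big]^{X_\zeta}_{0,n+2,d}.
\]
First I would cut this down to the two-pointed stem $n=0$. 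Since $\M^X_{0,n+2,d}(\zeta)$ has virtual dimension $n+2$, $\ch(L^{1/r}-1)$ starts in degree $1$, each $\leg_\zeta(L)$ starts in degree $\geq 2$ (by \eqref{eqn_leg_tail} it is a multiple of $(1-P)^2$), and $\delta_\zeta$ contributes degree $\geq 0$, a dimension count excludes $n\geq 2$. The borderline case $n=1$ is dimensionally possible but is killed by Lemma~\ref{lemma_vanishingH^2}, applied with the leg marked point as the distinguished insertion, since it carries $\ev^*$ of a class of complex degree $\geq 2$.

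On the two-pointed stem the new feature, compared with Lemmas~\ref{lemma_statement2} and \ref{lemma_statement3}, is that the first insertion is no longer of degree $\geq 2$: the expansion $\ch(L^{1/r}-1)=\frac{c_1(L_1)}{r}+\frac{c_1(L_1)^2}{2r^2}+\cdots$ has a nonzero degree-$1$ part. Keeping only the leading piece $1-\zeta^{-1}L^{1/r}$ of $\delta_\zeta$ (its tail pieces are multiples of $(1-P)^2$ and vanish by Lemma~\ref{lemma_vanishingH^2}) and expanding $\ch(1-\zeta^{-1}L^{1/r})=(1-\zeta^{-1})-\zeta^{-1}\frac{c_1(L_2)}{r}+\cdots$, two pieces of the degree-$2$ integrand survive: the degree-$2$ part of the first insertion paired with the degree-$0$ part $(1-\zeta^{-1})$ of $\delta_\zeta$, and the degree-$1$ parts of the two insertions. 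These give
\[
(1-\zeta^{-1})\frac{1}{2r^2}\int_{[\M^X_{0,2,d}(\zeta)]^{\vir}} c_1(L_1)^2 \quad\text{and}\quad -\frac{\zeta^{-1}}{r^2}\int_{[\M^X_{0,2,d}(\zeta)]^{\vir}} c_1(L_1)c_1(L_2).
\]
The first integral equals $-2\GW_d/r$ (this is the computation already carried out in Lemma~\ref{lemma_statement3}), and I would evaluate the second as $2\GW_d/r$ by reducing along the degree-$r$ cover to the honest genus-$0$ two-point invariant $\langle\tau_1(1),\tau_1(1)\rangle_{0,2,d}=2\GW_d$, obtained from $\langle\tau_1(1)\rangle_{0,1,d}=-2\GW_d$ via the dilaton equation. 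Substituting $\GW_d=\GV^{(3)}_d/d^3$ and $M=rd$ then reproduces the two stated terms $(1-\zeta^{-1})(-\GV^{(3)}_{M/r}/M^3)$ and $-\zeta^{-1}(2\GV^{(3)}_{M/r}/M^3)$.

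The hard part will be justifying that these are the \emph{only} contributions, i.e. that in the surviving $n=0$ stratum the twisting class $\td(T_\M)\ch(1/\Tr(\Lambda^* N^*_\M))$ may be replaced by its constant term. By Lemma~\ref{l:4.1} this constant is $r^{-n}=1$; but whereas in Lemmas~\ref{lemma_statement2} and \ref{lemma_statement3} the degree-$2$ first insertion saturated the virtual dimension and automatically forced the twisting to enter only in degree $0$, here the degree-$1$ part of $L^{1/r}-1$ leaves room for the degree-$1$ part $T^{(1)}_{0,2,d}(\zeta)$ of the twisting to contribute a spurious term $(1-\zeta^{-1})\frac1r\int c_1(L_1)\,T^{(1)}_{0,2,d}(\zeta)$. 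The crux is therefore to show this extra integral vanishes. I expect this to follow from the Calabi--Yau condition $c_1(T_X)=0$, which annihilates the $\ev^*c_1(T_X)$ contribution to $c_1(T^{\vir}_\M)$ in the type-$A$ twisting, together with a projection argument in the spirit of Lemma~\ref{lemma_vanishingH^2} handling the remaining cotangent- and node-type pieces; once this vanishing is in place the bracket collapses to the two plain two-point integrals above, exactly as in the previous lemmas.
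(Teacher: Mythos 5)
Your proposal follows the same route as the paper: apply Proposition~\ref{Prop_Stem_inv} with $t=0$, reduce to the two-pointed stem, expand $\ch(L^{1/r}-1)$ and $\ch(1-\zeta^{-1}L^{1/r})$, and evaluate the two surviving integrals $\int c_1(L_1)^2=-2\GW_d/r$ and $\int c_1(L_1)c_1(L_2)=2\GW_d/r$; your numerics agree with the paper's. However, the step you yourself identify as the crux --- that the positive-degree part of the twisting class $\td(T_{\M})\ch(1/\Tr(\Lambda^*N^*_{\M}))$ contributes nothing, even though the degree-$1$ piece $c_1(L_1)/r$ of the first insertion leaves room for it --- is left as an expectation (``I expect this to follow from\dots'') rather than proven. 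That is a genuine gap, since this is exactly the new difficulty of this lemma relative to Lemmas~\ref{lemma_statement2} and \ref{lemma_statement3}. The paper closes it by running through the three types of twisting listed in Section~\ref{appen_Kawasaki}: the type~$A$ twisting gives no contribution because $X$ is a Calabi--Yau threefold, the type~$B$ twisting gives none because $L$ is trivial on $\M^X_{0,2,d}(\zeta)$, and the type~$C$ twisting (together with the leg term) is killed by Lemma~\ref{lemma_vanishingH^2} when $d\neq 0$.

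A second, smaller issue: you dismiss the $n=1$ (one-leg) stratum by invoking Lemma~\ref{lemma_vanishingH^2}, but that lemma requires $\beta\neq 0$, so it does not cover the three-pointed stem of degree $0$ --- which is not trivially zero in $Q$, since $\leg_{\zeta}$ carries positive powers of the Novikov variable. The paper keeps the term $\bigl[\,L^{1/r}-1,\ \leg_{\zeta}(L),\ 1-\zeta^{-1}L^{1/r}\,\bigr]^{X_{\zeta}}_{0,3,0}$ explicitly and observes that for $d=0$ the invariant reduces to a Poincar\'e pairing on $X$ and vanishes (concretely, $L_1$ is trivial on the degree-$0$ three-pointed moduli, so the first insertion $L^{1/r}-1$ already kills it). You should add this case to make the reduction to $n=0$ complete.
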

\begin{proof}
Proposition \ref{Prop_Stem_inv} with input $t=0$ gives:
\[
\begin{split}
& \sum_{d} Q^d \langle \zeta^{-1} L-1 \rangle^{X_{\zeta}}_{0,1,d} 
\\
= & \sum_{n,d} \frac{Q^{rd}}{n!}  \Big[ L^{1/r}-1 , \leg_{\zeta}(L), \dots, \leg_{\zeta}(L), \delta_{\zeta}(L^{1/r}) \Big]^{X_{\zeta}}_{0,2+n,d}
\\
= & \sum_{d} Q^{rd} \Big[ L^{1/r}-1,1-\zeta^{-1}L^{1/r} \Big]^{X_{\zeta}}_{0,2,d} + \Big[ L^{1/r}-1, \leg_{\zeta}(L) ,1-\zeta^{-1}L^{1/r} \Big]^{X_{\zeta}}_{0,3,0}
\\
= & \sum_{d} Q^{rd} \int_{ [\M_{0,2,d}^X(\zeta)]^{\vir} }  \Big( \frac{c_1(L_1)}{r} +\frac{c_1(L_1)^2}{2r^2} \Big)  \cdot \Big( (1-\zeta^{-1}) - \zeta^{-1} \frac{c_1(L_2)}{r} \Big) 
\\
= & \sum_{d} Q^{rd} \Big[ (1-\zeta^{-1})\Big(\frac{-\GV^{(3)}_d}{r^3d^3}\Big) - \zeta^{-1} \Big(\frac{2\GV^{(3)}_d}{r^3d^3} \Big)    \Big].
\end{split}
\]
In the third equality, the computation reduce to 2-pointed and untwisted case. We use the following observations. The twisting class of types A gives no contribution since $X$ is Calabi-Yau threefold. The twisting class of types B gives no contribution since $L$ is trivial on $\M^X_{0,2,d}(\zeta)$. 
Finally, the twisting class of type C and the term with $\leg_{\zeta}(L)$ give no contribution when $d\neq 0$ by Lemma \ref{lemma_vanishingH^2}. When $d=0$, the invariant reduces to the Poincar\'e pairing on $X$ and equals zero. 

Compare the coefficient of $Q^M$ gives the result.

\end{proof}

\begin{lemma}\label{lemma_statement5}
    \[
    \langle 1-P \rangle^{X_{\zeta}}_{0,1,M} = (1-\zeta^{-1}) \GV^{(1)}_{M/r} + \zeta^{-1} \Big( \frac{ \GV^{(3)}_{M/r} }{M^2} \Big)  .
    \]
\end{lemma}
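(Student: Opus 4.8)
The plan is to follow the template of Lemmas~\ref{lemma_statement2}--\ref{lemma_statement4}, applying Proposition~\ref{Prop_Stem_inv} with input $t=0$ to write
\[
\sum_d Q^d\langle 1-P\rangle^{X_{\zeta}}_{0,1,d} = \sum_{n,d}\frac{Q^{rd}}{n!}\Big[1-P, \leg_{\zeta}(L),\dots,\leg_{\zeta}(L),\delta_{\zeta}(L^{1/r})\Big]^{X_{\zeta}}_{0,n+2,d},
\]
and then to isolate the surviving configurations by a dimension count. For stem degree $d\neq 0$, every leg insertion and every tail summand of $\delta_{\zeta}$ has $\deg_{\mathbb{C}}\geq 2$ and sits at a point that can be forgotten, so Lemma~\ref{lemma_vanishingH^2} forces $n=0$ and replaces $\delta_{\zeta}(L^{1/r})$ by its leading piece $1-\zeta^{-1}L^{1/r}$. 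Exactly as in Lemma~\ref{lemma_statement4}, the twisting class then reduces to $1$ (type $A$ vanishes since $X$ is Calabi--Yau, type $B$ since $L$ is trivial on the stem, type $C$ by Lemma~\ref{lemma_vanishingH^2}), leaving the two-pointed integral $\big[1-P,\,1-\zeta^{-1}L^{1/r}\big]^{X_{\zeta}}_{0,2,d}$.

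First I would compute this main two-pointed term. Expanding $\ch(1-P)=H-\tfrac12 H^2+\cdots$ at the first point and $\ch(1-\zeta^{-1}L^{1/r})=(1-\zeta^{-1})-\tfrac{\zeta^{-1}}{r}c_1(L_2)+\cdots$ at the last, the degree-two part of the product is $-\tfrac{\zeta^{-1}}{r}\ev_1^*H\cdot c_1(L_2)-\tfrac{1-\zeta^{-1}}{2}\ev_1^*H^2$. The $H^2$-integral vanishes by the string equation, while the remaining term, after absorbing the $1/r$ from the virtual class and using the dilaton equation $\langle\tau_0(H),\tau_1(1)\rangle_{0,2,d}=-d\GW_d$, contributes $\zeta^{-1}d\GW_d/r^2$. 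Together with $\GW_d=\GV^{(3)}_d/d^3$ (from \eqref{e:1.2}) this is precisely the term $\zeta^{-1}\GV^{(3)}_{M/r}/M^2$ of the statement, with $M=rd$.

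The genuinely new contribution, and the main obstacle, is the stem degree $d=0$ locus. This is exactly where Lemma~\ref{lemma_statement5} departs from Lemmas~\ref{lemma_statement2}--\ref{lemma_statement4}: there the first insertion carried a factor $L^{1/r}-1$ that vanishes on the constant-map locus (where $L$ is trivial), killing the $d=0$ term, whereas here $1-P$ stays nonzero on constant maps. On the $d=0$ stratum the integral becomes a classical intersection on $X$, and a degree count against $\dim_{\mathbb{C}}X=3$ forces exactly one leg ($n=1$), with $1-P$ supplying its degree-one part $H$, the leg $\leg_{\zeta}(1)=\Psi^r(\cdots)$ its degree-two part, and $\delta_{\zeta}(1)$ its constant $1-\zeta^{-1}$. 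I would evaluate this using the constant $r^{-n}$ of Lemma~\ref{l:4.1}, the explicit leg read off from the positive part of $J^{\KK}|_{q=1}$ via \eqref{eqn_leg_tail} and \eqref{eqn:J-fake}, and $\int_X H^3=5$.

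The hard part is to show that this $d=0$ term assembles to $(1-\zeta^{-1})\GV^{(1)}_{M/r}$. The Adams operation $\Psi^r$ rescales the degree-$k$ part of the arm by $r^k$ and sends $Q^e\mapsto Q^{re}$, so only a single leg of arm-degree $e=M/r$ lands in $Q^M$, and the factors of $r$ coming from $\Psi^r$, from the virtual class, and from $r^{-n}$ must combine to cancel cleanly. The divisor-sum shape $\GV^{(1)}_{M/r}=\sum_{k\mid (M/r)}k\,\GV_k$ is then inherited from the $\GV$-content of the arm, which is controlled by induction on the Novikov degree through the lower-order instances of the very expansion being proved. Making this bookkeeping exact --- tracking the higher Todd and normal-bundle corrections on the $d=0$ stratum and confirming they leave the leading identity undisturbed --- is where the real work lies; extracting the coefficient of $Q^M$ at the end then yields the stated formula.
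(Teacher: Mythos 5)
Your architecture tracks the paper's: apply Proposition~\ref{Prop_Stem_inv} at $t=0$, use Lemma~\ref{lemma_vanishingH^2} to kill legs and tails on the $d\neq 0$ strata, evaluate the two-pointed term $\big[1-P,\,1-\zeta^{-1}L^{1/r}\big]^{X_\zeta}_{0,2,d}$ (your value $\zeta^{-1}d\GW_d/r^2=\zeta^{-1}\GV^{(3)}_{M/r}/M^2$ agrees with the paper's), and isolate the degree-zero stem with exactly one leg, whose $Q^M$-coefficient comes from the inductive hypothesis of Theorem~\ref{Thm_rpower}. But there is a genuine gap in the $d=0$ analysis: you expect the remaining ``Todd and normal-bundle corrections'' to leave the leading identity undisturbed, and they do not. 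The paper's proof carries a third contribution, the type-$C$ twisting of the \emph{unramified} nodal locus,
\[
\Big[\,1-P,\ \Psi^r\Big(\tfrac{(1-P)^2}{5}\textstyle\sum_k Q^k\langle 1-P\rangle^{\fake}_{0,1,k}\Big),\ 1-\zeta^{-1}L^{1/r}\,\Big]^{X_\zeta}_{0,3,0}
=(1-\zeta^{-1})\sum_k Q^{rk}\,\tfrac{1}{k^2}\GV^{(3)}_k,
\]
which is of exactly the same order as the leg term. The leg term alone does \emph{not} produce $(1-\zeta^{-1})\GV^{(1)}_{M/r}$: since the $(1-P^r)^2$-component of $\leg_\zeta$ carries the coefficient $\GV^{(1)}_d-\GV^{(3)}_d/d^2$ rather than $\GV^{(1)}_d$, it evaluates to $(1-\zeta^{-1})\big(\GV^{(1)}_{M/r}-\tfrac{r^2}{M^2}\GV^{(3)}_{M/r}\big)$, and only after adding the type-$C$ node contribution $(1-\zeta^{-1})\tfrac{r^2}{M^2}\GV^{(3)}_{M/r}$ does the unwanted piece cancel. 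As written, your computation would terminate one term short and land on the wrong answer. (This is also why the earlier lemmas offer no guidance here: their first insertions contain a factor $L^{1/r}-1$ that suppresses the entire $d=0$ stratum, node twistings included.)

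A smaller imprecision: the leg is not read off from the fake $J$-function \eqref{eqn:J-fake} but from $\Psi^r\big([J^K_{q=1}]_+-(1-q)\big)$ as in \eqref{eqn_leg_tail}, i.e. from the full arm, whose relevant coefficient is supplied by the induction on Novikov degree (recoverable at $Q^M$ precisely because $r\geq 2$). You do invoke the right induction, so this is a citation issue rather than a substantive one; the missing type-$C$ contribution is the real defect.
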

\begin{proof}
The computation is similar to Lemma \ref{lemma_statement4} with nonzero contributions when $d=0$.

Proposition \ref{Prop_Stem_inv} with input $t=0$ gives:
\[
\begin{split}
&\sum_d Q^d\langle 1-P \rangle^{X_{\zeta}}_{0,1,d} \\
& =\sum_{n,d} \frac{Q^{rd}}{n!}  \Big[ 1-P , \leg_{\zeta}(L), \dots, \leg_{\zeta}(L), \delta_{\zeta}(L^{1/r}) \Big]^{X_{\zeta}}_{0,2+n,d}
\\
&= \sum_{d} Q^{rd} \Big[ 1-P,1-\zeta^{-1}L^{1/r} \Big]^{X_{\zeta}}_{0,2,d} + \Big[ 1-P, \leg_{\zeta}(L)  ,1-\zeta^{-1}L^{1/r} \Big]^{X_{\zeta}}_{0,3,0}
\\
&= \sum_d Q^{rd} \zeta^{-1} \Big( \frac{\GV^{(3)}_{d}}{r^2d^2} \Big)
+ \Big[  1-P, \leg_{\zeta}(L), 1-\zeta^{-1} L^{1/r} \Big]^{X_{\zeta}}_{0,3,0}
\\
&+ \Big[ 1-P, \Psi^r\Big( \frac{(1-P)^2}{5} \sum_k Q^k \langle 1-P \rangle^{\fake}_{0,1,k} \Big), 1-\zeta^{-1}L^{1/r} \Big]^{X_{\zeta}}_{0,3,0} 
\end{split}
\]
The last term corresponds to the contribution of the twisting of unramified locus in type $C$. 
It can be computed as follows:
\[
\begin{split}
    &\Big[ 1-P, \Psi^r\Big( \frac{(1-P)^2}{5} \sum_k Q^k \langle 1-P \rangle^{\fake}_{0,1,k} \Big), 1-\zeta^{-1}L^{1/r} \Big]^{X_{\zeta}}_{0,3,0} \\
    &= \Big[ 1-P, \frac{(1-P^r)^2}{5} \sum_k Q^{rk} (k\GW_k), 1-\zeta^{-1} \Big]^{X_{\zeta}}_{0,3,0}
    \\
    &= (1-\zeta^{-1}) \sum_k Q^{rk} (k\GW_k) = (1-\zeta^{-1}) \sum_k Q^{rk} \Big(\frac{1}{k^2} \GV^{(3)}_k\Big) . 
\end{split}
\]
Note that the constant $r^2$ from $(1-P^r)^2$ is cancelled by the constant in Lemma \ref{l:4.1} and the Poincar\'e pairing on $X/\mathbb{Z}_r$.

Assume the expression of $J^K$ in Theorem \ref{Thm_rpower} up to $Q^{M-1}$, a direct computation shows that:
\[
\begin{split}
\leg_{\zeta}(q) &= \Psi^r \Big(  [J^K_{q=1} ]_+ - (1-q) \Big) \\
&= \frac{(1-P^r)^2}{5} \sum_{d < M } Q^{rd} \Big(  \GV_d^{1} - \frac{1}{d^2} \GV^{(3)}_d \Big) 
\\
& + \frac{(1-P^r)^3}{5} \sum_{d<M}Q^{rd}\Big( d\GV_d^{(-1)} + \GV_d^{(1)} - \frac{ \GV_d^{(3)} }{d^2} - \frac{\GV_d^{(3)}}{d^3}  \Big)
\\
& \hspace{90mm} \mbox{(mod $Q^{M+1}$)}
\end{split}
\]
We recover the coefficient of $Q^M$ on $\leg_{\zeta}(q)$ since $r\geq 2$. Now we have
\[
\begin{split}
     & {\rm Coeff}\Big(\Big[  1-P, \leg_{\zeta}(L), 1-\zeta^{-1} L^{1/r} \Big]^{X_{\zeta}}_{0,3,0} ; Q^M \Big)
     \\
     & \hspace{60pt} =(1-\zeta^{-1}) \Big( \GV^{(1)}_{M/r} - \frac{r^2}{M^2}\GV^{(3)}_{M/r}  \Big).
\end{split}
\]
Compare the coefficient of $Q^M$ gives the result.
\end{proof}

\begin{lemma} \label{lemma_statement6}
    \[
    \langle 1 \rangle^{X_{\zeta}}_{0,1,M} =  (1-\zeta^{-1}) M\GV^{(-1)}_{M/r}  + \zeta^{-1} \Big(\frac{\GV^{(3)}_{M/r}}{M^3}\Big).
    \]
\end{lemma}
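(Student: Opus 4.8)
The plan is to mirror the proof of Lemma~\ref{lemma_statement5}, which is the closest analogue. I would begin by applying Proposition~\ref{Prop_Stem_inv} with input $t=0$ to write
\[
\sum_d Q^d \langle 1 \rangle^{X_\zeta}_{0,1,d} = \sum_{n,d} \frac{Q^{rd}}{n!} \Big[ 1, \leg_\zeta(L), \dots, \leg_\zeta(L), \delta_\zeta(L^{1/r}) \Big]^{X_\zeta}_{0,2+n,d},
\]
and then isolate the finitely many summands that survive for dimensional reasons, using that $\M^X_{0,2,d}(\zeta)$ has virtual dimension $2$ and that the $d=0$ three-pointed space has dimension $3$. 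The decisive new feature, absent from Lemmas~\ref{lemma_statement2}--\ref{lemma_statement5}, is that the distinguished insertion is $1$, whose Chern character sits in degree $0$. A degree-$0$ insertion at the first marked point consumes none of the dimension budget, so for the first time the \emph{top} pieces enter the computation: the $(1-P^r)^3$-part of $\leg_\zeta$ and the $(1-P)^3$-part of the fake bubble in the type $C$ (unramified node) twisting. This is precisely why the answer now involves $\GV^{(-1)}$, a flavour never seen in the earlier lemmas.

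Next I would separate and evaluate the expected contributions. For the two-pointed term $[1,\,1-\zeta^{-1}L^{1/r}]^{X_\zeta}_{0,2,d}$ with $d\neq 0$, I would argue as in Lemma~\ref{lemma_statement4} that the type $A$ and type $B$ twistings drop out ($X$ Calabi--Yau, $L$ trivial on the stem), that the degree-$1$ part of $\ch(1-\zeta^{-1}L^{1/r})$ integrates to zero by dimension, and that the degree-$2$ part $-\zeta^{-1}c_1(L_2)^2/(2r^2)$ is computed by the identity $\int_{[\M^X_{0,2,d}(\zeta)]^\vir} c_1(L_2)^2 = -2\GW_d/r$ of Lemma~\ref{lemma_statement3} together with $\GW_d = \GV^{(3)}_d/d^3$; this yields the term $\zeta^{-1}\GV^{(3)}_{M/r}/M^3$. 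For the $d=0$ leg term $[1,\leg_\zeta(L),1-\zeta^{-1}L^{1/r}]^{X_\zeta}_{0,3,0}$, the three-pointed constant-map space forces degree $3$ on $X$; since the first insertion is $1$ and $\delta_\zeta$ collapses to $(1-\zeta^{-1})$, only the $(1-P^r)^3$-part of $\leg_\zeta$ survives, and using $\ch\big((1-P^r)^3\big)=r^3H^3$, $\int_X H^3 = 5$, the orbifold factor $1/r$ and the constant $r^{-1}$ of Lemma~\ref{l:4.1}, together with the inductive expression for $\leg_\zeta$ from Theorem~\ref{Thm_rpower}, the leading coefficient $d\,\GV^{(-1)}_d$ produces $(1-\zeta^{-1})\,rd\,\GV^{(-1)}_d = (1-\zeta^{-1})M\GV^{(-1)}_{M/r}$. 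The remaining $d=0$ type $C$ contribution I would compute exactly as in Lemma~\ref{lemma_statement5}, but now taking the fake bubble in its degree-$3$, i.e. $(1-P)^3$, flavour as read off from \eqref{eqn:J-fake}.

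Finally I would assemble these and check that everything except the two stated terms cancels, with the induction on the Novikov degree closing as in Lemma~\ref{lemma_statement5} (the legs and tails involve $J^K$ only at strictly lower order, where Theorem~\ref{Thm_rpower} is available). The hard part will be this cancellation. Beyond its leading $\GV^{(-1)}$ term, the $(1-P^r)^3$-leg also carries spurious $\GV^{(1)}_{M/r}$ and $\GV^{(3)}_{M/r}$ pieces, and matching the final answer demands that these be annihilated against the type $C$ bubble and any residual $d=0$ twisting contributions; tracking the several competing powers of $r$ and $d$ through this reduction — one cohomological degree higher than in Lemma~\ref{lemma_statement5}, and now genuinely requiring the full $(1-P)^3$-parts of both Theorem~\ref{Thm_rpower} and the fake $J$-function of \eqref{eqn:J-fake} — is the delicate bookkeeping on which the proof turns, and is best relegated to the computational section.
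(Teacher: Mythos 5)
Your decomposition is the same as the paper's: Proposition~\ref{Prop_Stem_inv} at $t=0$, then three pieces --- the two-pointed stem integral (which you evaluate correctly via $-\zeta^{-1}c_1(L_2)^2/(2r^2)$), the $d=0$ leg bracket fed by the inductive expression of Theorem~\ref{Thm_rpower}, and the $d=0$ type $C$ bubble with the $(1-P)^3$ flavour of \eqref{eqn:J-fake}. The gap is in your evaluation of the leg bracket. Your claim that ``only the $(1-P^r)^3$-part of $\leg_\zeta$ survives'' is false: on the threefold $\ch\big((1-P^r)^2\big)=r^2H^2-r^3H^3$, so the $(1-P^r)^2$-part of the leg also contributes in degree $3$, with coefficient $-r(1-\zeta^{-1})\big(\GV^{(1)}_{M/r}-\tfrac{r^2}{M^2}\GV^{(3)}_{M/r}\big)$. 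It is precisely this subleading Chern-character term that kills the ``spurious'' $\GV^{(1)}$ and $\GV^{(3)}/d^2$ pieces of the $(1-P^r)^3$-coefficient, since
\[
\Big(d\GV^{(-1)}_d+\GV^{(1)}_d-\tfrac{\GV^{(3)}_d}{d^2}-\tfrac{\GV^{(3)}_d}{d^3}\Big)-\Big(\GV^{(1)}_d-\tfrac{\GV^{(3)}_d}{d^2}\Big)=d\GV^{(-1)}_d-\tfrac{\GV^{(3)}_d}{d^3},
\]
which is exactly the value the proof of Lemma~\ref{lemma_statement6} records for the leg bracket; the remaining $-r(1-\zeta^{-1})\GV^{(3)}_d/d^3$ is then cancelled by the type $C$ bubble.

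The cancellation mechanism you propose instead cannot work: the bubble insertion is $\Psi^r\big(\tfrac{(1-P)^3}{5}\sum_k Q^k\langle 1\rangle^{\fake}_{0,1,k}\big)$ with $\langle 1\rangle^{\fake}_{0,1,k}=\GW_k=\GV^{(3)}_k/k^3$, so it carries only $\GV^{(3)}$-flavoured data and has no $\GV^{(1)}$ content to absorb; nor do the twisting-class corrections supply such a term. As written, your computation would therefore come out wrong by $r(1-\zeta^{-1})\big(\GV^{(1)}_{M/r}-\tfrac{r^2}{M^2}\GV^{(3)}_{M/r}\big)$. Note that this subtlety is invisible in Lemma~\ref{lemma_statement5}, where the degree-$1$ insertion $\ch(1-P)$ forces exactly the leading term $r^2H^2$ of the $(1-P^r)^2$-leg; it is only for the degree-$0$ insertion $1$ that the quadratic correction to $\ch\big((1-P^r)^2\big)$ enters. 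Once this is fixed, the rest of your outline matches the paper's proof.
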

\begin{proof}
The computation is similar to Lemma \ref{lemma_statement5}. Proposition \ref{Prop_Stem_inv} with input $t=0$ gives:
\[
\begin{split}
&\sum_d Q^d \langle 1 \rangle^{X_{\zeta}}_{0,1,d}
\\
&= \sum_{n,d} \frac{Q^{rd}}{n!} \Big[ 1, \leg_{\zeta}(L),\dots, \leg_{\zeta}(L), \delta_{\zeta}(L^{1/r}) \Big]^{X_{\zeta}}_{0,2+n,d}
\\
& =\zeta^{-1} \Big( \frac{\GV^{(3)}_{d}}{r^3d^3} \Big) + \Big[  1, \leg_{\zeta}(L), 1-\zeta^{-1} L^{1/r} \Big]^{X_{\zeta}}_{0,3,0}
\\
& + \Big[ 1, \Psi^r\Big( \frac{(1-P)^3}{5} \sum_k Q^k \langle 1 \rangle^{\fake}_{0,1,k} \Big), 1-\zeta^{-1}L^{1/r} \Big]^{X_{\zeta}}_{0,3,0}.
\end{split}
\]
Note that 
\[
\begin{split}
    &\Big[ 1, \Psi^r\Big( \frac{(1-P)^3}{5} \sum_k Q^k \langle 1 \rangle^{\fake}_{0,1,k} \Big), 1-\zeta^{-1}L^{1/r} \Big]^{X_{\zeta}}_{0,3,0} \\
    &= \Big[ 1, \frac{(1-P^r)^3}{5} \sum_k Q^{rk} \GW_k, 1-\zeta^{-1} \Big]^{X_{\zeta}}_{0,3,0}
    \\
    &= r(1-\zeta^{-1}) \sum_k Q^{rk} (\GW_k) = r(1-\zeta^{-1}) \sum_k Q^{rk} \Big(\frac{1}{k^3} \GV^{(3)}_k\Big),
\end{split}
\]
and that
\[
\begin{split}
     & {\rm Coeff}\Big(\Big[  1, \leg_{\zeta}(L), 1-\zeta^{-1} L^{1/r} \Big]^{X_{\zeta}}_{0,3,0} ; Q^M \Big)
     \\
     & \hspace{60pt} =r(1-\zeta^{-1}) \Big( \frac{M \GV^{(-1)}_{M/r}}{r} - \frac{r^3}{M^3}\GV^{(3)}_{M/r}  \Big),
\end{split}
\]
by assuming the expression of $J^K$ in Theorem \ref{Thm_rpower} up to $Q^{M-1}$.

Compare the coefficient of $Q^M$ gives the result.
\end{proof}

\appendix

\section{Alternative proof of Theorem~\ref{Thm_deg0_inv}} \label{Appendix_degree0}
In this Appendix, we give another proof of Theorem~\ref{Thm_deg0_inv} using only reconstruction theorem.

We start with the localization at $q=1$, which will relate to the reconstruction theorem in cohomological theory, which is recalled as follows:
\[
\begin{split}
\frac{1}{-z} J^H (0) &= 1 + \sum_{d\geq 0} Q^d \Big( \frac{d\GW_d H^2}{(-z)^2} - \frac{2\GW_d H^3}{(-z)^3} \Big)
\\
&= \frac{1}{-z} \sum_{d\geq 0}I^H_d Q^d \exp \Big( \frac{(dz-H)\tau(Q)}{z} \Big) c(Q),
\end{split}
\]
for some uniquely determined $\tau(Q), c(Q) \in \mathbb{Q}[\![Q]\!]$.
\begin{theorem}
Degree $0$ terms with $\zeta=1$ are given as follows:
\[
\begin{split}
    a_{M,1} &=0,
    \\
    c_{M,1} &= \frac{\GV_M^{(3)}}{M^2} = M\GW_M ,
    \\
    f_{M,1} &= \frac{-2 \GV_M^{(3)}}{M^3} = -2\GW_M.
\end{split}
\]
\end{theorem}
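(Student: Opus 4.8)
The plan is to localize the $K$-theoretic reconstruction formula \eqref{e:2.1} at $q=1$ and to match its leading part, i.e.\ the part of $\deg_{q=1}=0$ (equivalently, the top poles at $q=1$), against the explicit cohomological $J^H$-function recalled above. The dictionary is furnished by the quantum Chern character ${\rm qch}$, which to leading order sends $1-P\mapsto H$ and $1-q\mapsto -z$, hence $\frac{(1-P)^i}{(1-q)^i}\mapsto \frac{H^i}{(-z)^i}$ and the prefactor $\frac{1}{1-q}\mapsto\frac{1}{-z}$. Under this correspondence the coefficients $a_{M,1}$, $c_{M,1}$, $f_{M,1}$ of $\frac{1-P}{1-q}$, $\frac{(1-P)^2}{(1-q)^2}$, $\frac{(1-P)^3}{(1-q)^3}$ in $\frac{1}{1-q}J^K(0)$ are matched with the coefficients of $\frac{H}{-z}$, $\frac{H^2}{(-z)^2}$, $\frac{H^3}{(-z)^3}$ in $\frac{1}{-z}J^H(0)$.

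First I would localize the three factors of \eqref{e:2.1} at $q=1$ separately, as in the proofs of Theorem~\ref{t:3.1} and Corollary~\ref{c:3.2}. Using the product expansion \eqref{e:3.1}, the factor $I^K_d$ matches the cohomological $I^H_d$ in degree $0$, the $\Delta$-type (Euler--Maclaurin) corrections being of higher order. For the middle $\exp$ factor (with $\e_0=0$ by the proof of Theorem~\ref{t:3.1}), a direct count gives $\deg_{q=1}\bigl(\frac{\Psi^k(\e_i)(1-P^kq^{kd})^i}{k(1-q^k)}\bigr)=i-1$, since $1-P^kq^{kd}$ and $1-q^k$ each have $\deg_{q=1}=1$; hence the $i=2,3$ summands enter only in strictly positive degree and only $i=1$ contributes in degree $0$. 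Near $q=1$ one has $1-q^k\approx k(1-q)\approx -kz$ and ${\rm qch}(1-P^kq^{kd})\approx 1-e^{k(dz-H)}\approx k(H-dz)$, so the degree-$0$ exponent becomes $\frac{(dz-H)}{z}\sum_{k\geq 1}\frac{1}{k}\Psi^k(\e_1(Q))$; since $\Psi^k(\e_1(Q))=\e_1(Q^k)$, the $k$-sum resums into a single mirror map $\tau(Q)=\sum_{k\geq 1}\frac{1}{k}\e_1(Q^k)$, reproducing the cohomological exponent $\frac{(dz-H)\tau(Q)}{z}$. Finally the polynomial factor $\sum_i r_i(q,Q)(1-Pq^d)^i$ is regular at $q=1$ and contributes the normalization $c(Q)$ in degree $0$.

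Assembling these, the degree-$0$ part of $\frac{1}{1-q}J^K(0)$ at $q=1$ coincides, under the dictionary, with $\frac{1}{-z}J^H(0)$, whose explicit value is
\[
\frac{1}{-z}J^H(0)=1+\sum_{d\geq 0}Q^d\left(\frac{d\GW_d H^2}{(-z)^2}-\frac{2\GW_d H^3}{(-z)^3}\right).
\]
Reading off the coefficients of $Q^M$ (and tracking the normalization factors of $5$ from the two pairings), the vanishing of the $H$-linear term gives $a_{M,1}=0$; the $H^2$-term gives $c_{M,1}=M\GW_M=\GV^{(3)}_M/M^2$ via $\GW_M=\GV^{(3)}_M/M^3$; and the $H^3$-term gives $f_{M,1}=-2\GW_M=-2\GV^{(3)}_M/M^3$, the odd power of $-z$ supplying the sign.

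The main obstacle is the middle step: making precise that localization at $q=1$ carries the $K$-theoretic reconstruction data $(I^K_d,\e_i,r_i)$ to the cohomological data $(I^H_d,\tau,c)$ at the level of $\deg_{q=1}=0$, including the resummation of the $k$-sum into $\tau$ and the verification that the $i=2,3$ pieces of the exponent together with the Euler--Maclaurin corrections implicit in ${\rm qch}$ (cf.\ Theorem~\ref{thm_coh_fake}) all land in strictly positive $\deg_{q=1}$ and hence leave the top poles untouched. One must also keep track of the normalization factors of $5$ coming from the two Poincar\'e pairings ($\int_X H^3=5$ versus the $K$-theoretic $\chi$) and confirm that they cancel so that the clean values $M\GW_M$ and $-2\GW_M$ emerge.
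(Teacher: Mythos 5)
Your proposal is correct and follows essentially the same route as the paper's own proof of this statement (the appendix argument): localize the reconstruction formula \eqref{e:2.1} at $q=1$, show factor by factor that only the $i=1$ part of the exponent and the degree-$0$ parts of $I^K_d$ and $r_0$ survive in $\deg_{q=1}=0$, and match the result with the cohomological reconstruction of $\tfrac{1}{-z}J^H(0)$ under $1-q\mapsto -z$, $1-P\mapsto H$. Your degree count $\deg_{q=1}\bigl(\tfrac{\Psi^k(\e_i)(1-P^kq^{kd})^i}{k(1-q^k)}\bigr)=i-1$ and the resummation of the $k$-sum into the mirror map are exactly the paper's intermediate lemmas (your signs in the identification of $\tau$ are in fact the consistent ones).
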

\begin{proof}
We localize the reconstruction theorem at $q=1$:
\[
\begin{split}
& \frac{1}{1-q} J^K(0) \\
\equiv_{q=1}^0 & \sum_{d\geq 0} \Big(\frac{1}{1-q}I^K_d \Big)_{q=1} Q^d \exp \left( \sum_{k>0}\frac{ \sum_{i=1}^3 \Psi^k(\e_i(Q)) (1-P^kq^{kd})^i}{k(1-q^k)}  \right)_{q=1}  
\\
&\qquad\qquad \cdot \left(\sum_{i=0}^3 r_i(q,Q) (1-Pq^d)^i\right)_{q=1} .
\end{split}
\]
We analyze the $I^K$ term, the $\exp$ term and the polynomial term at the end individually.
\begin{lemma}
\[
\Big(  \frac{1}{1-q} I^{\KK} \Big)_{q=1} \equiv_{q=1}^0 \Big(\frac{1}{-z} I^H(0) \Big) \Big|_{z=-(1-q), H=1-P} 
\]
\end{lemma}
\begin{proof}
Since we consider only the degree $0$ part, it suffices to compute the following terms in the expansion of $I^{\KK}$:
\[
\begin{split}
    \left(\frac{ \prod_{r=1}^{5d} (1-q^r) }{( \prod_{r=1}^d (1-q^r) )^5} \right)_{q=1} &\equiv_{q=1}^0 \frac{(5d)!}{(d!)^5}
    \\
    \Big( e_i(k) (1-P)^i \Big)_{q=1} &\equiv_{q=1}^0 s_i(k) \Big( \frac{1-P}{1-q} \Big)  ^i,
\end{split}
\]
where 
\[
s_i(k) = \sum_{1 \leq j_1< \dots < j_i \leq k} \frac{1}{j_1} \dots \frac{1}{j_k}.
\]
After replacing all terms in the expansion of $I^{\KK}$ with their degree 0 part in the above computation and the substitution $(1-q) = -z$ and $1-P=H$, we get
\[
    \begin{split}
        &\frac{1}{-z}\sum_{d \geq 0} Q^d \frac{(5d)!}{(d!)^5} \Big( 1 + s_1(5d) (\frac{5H}{-z}) + e_2(5d) (\frac{5H}{-z})^2 + e_3(5d) (\frac{5H}{-z})^3 \Big)
        \\
        & \Big( 1 - 5 e_1(d) (\frac{H}{-z}) + (15e_1(d)^2 -5e_2(d)) (\frac{H}{-z})^2 - (35e_1(d)^3 -30e_1(d)e_2(d))(\frac{H}{-z})^3  \Big),
    \end{split}
\]
which is exactly the expansion of $\displaystyle \frac{1}{-z}I^H(0)$. This proves the lemma.
\end{proof}
\begin{lemma}
\[
\begin{split}
&\exp \left( \sum_{k>0}\frac{ \sum_{i=1}^3 \Psi^k(\e_i(Q)) (1-P^kq^{kd})^i}{k(1-q^k)}  \right)_{q=1}  \equiv_{q=1}^0
\\
&\qquad\exp \left(  \sum_{k > 0} \frac{ \Psi^k (T_1(Q)) (1-P) }{k(1-q)} \right)\exp \left(  \sum_{k>0} -\frac{d \Psi^k(T_1(Q))}{k} \right)
\end{split}
\]
\end{lemma}
\begin{proof}
For each $k$, we observe that
\[
\begin{split}
\frac{ 1-P^kq^{kd} }{k(1-q^k)} &\equiv_{q=1}^0 \frac{(1-P^k)}{k(1-q^k)} + \frac{1-q^{kd}}{k(1-q^k)}
 \equiv_{q=1}^0 \frac{(1-P)}{k(1-q)} + \frac{d}{k}
 \\
 \frac{ (1-P^kq^{kd})^i }{k(1-q^k)} &\equiv_{q=1}^0 0, \mbox{  for $i>1$}.
\end{split}
\]
The lemma follows.
\end{proof}
\begin{lemma}
\[
\sum_{i=0}^3 r_i(q,Q) (1-Pq^d)^i \equiv_{q=1}^0 r_0(1,Q)
\]
\end{lemma}
\begin{proof}
For the degree 0 term of the polynomial, we simply put $q=1$ and $P=1$. 
\end{proof}
Now we have 
\[
\begin{split}
    & \frac{1}{1-q}J^K(0) \\ 
    \equiv_{q=1}^0 & \left(\frac{1}{-z}\sum_{d\geq 0}I^H_d Q^d \exp \Big(-\tau(Q) \frac{H}{z}\Big) \exp \Big(d\tau(Q) \Big) \Big(c(Q) \Big)\right)\Big|_{z=-(1-q), H=1-P},
\end{split}
\]
where 
\[
\tau(Q) = - \sum_{k>0} \frac{\Psi^k (T_1(Q))}{k}, \quad c(Q) = r_0(1,Q).
\]
The above expression coincides with the  reconstruction theorem in cohomological theory:
\[
\begin{split}
\frac{1}{-z} J^H(0) &= \frac{1}{-z}\sum_{d\geq 0}I^H_d Q^d \exp \Big(-\tau(Q) \frac{H}{z}\Big) \exp \Big(d\tau(Q) \Big) \Big(c(Q) \Big)
\\
&= 1 + \sum_{d \geq 0} Q^d \Big(  \frac{d\GW_d H^2}{(-z)^2} - \frac{2\GW_d H^3}{(-z)^3} \Big).
\end{split}
\]
We conclude that the degree 0 term in the localization at $q=1$ coincides with cohomological computation after the substitution $1-q=-z$ and $1-P=H$.

\end{proof}

Let $\zeta$ be a primitive $r$-th roots of unity. We generalize the above computation to all degree 0 terms.
\begin{theorem} [=Theorem~\ref{Thm_deg0_inv}]
All degree 0 terms are given as follows:
\[
\begin{split}
    a_{M,\zeta} &=0,
    \\
    c_{M,\zeta} &= \frac{ \GV^{(3)}_{M/r}}{M^2} = \frac{\frac{M}{r} \GW_{M/r} }{r^2},
    \\
    f_{M,\zeta} &= \frac{-2 \GV^{(3)}_{M/r}}{M^3} = \frac{-2\GW_{M/r}}{r^3}.
\end{split}
\]
All invariants are defined to be zero if $M/r$ is not integer.
\end{theorem}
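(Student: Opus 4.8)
The plan is to run the $\zeta=1$ argument of this Appendix essentially verbatim, but to localize the reconstruction formula \eqref{e:2.1} at $q=\zeta^{-1}$ instead of at $q=1$, extract the part of degree $0$ for $\deg_{q=\zeta^{-1}}$, and match it against the cohomological reconstruction at the \emph{reduced} degree $d':=M/r$. The organizing observation is that, among the factors $\tfrac{1}{1-q^{k}}$ occurring in the $\exp$ term of \eqref{e:2.1} and, through the $e_i(5d),e_i(d)$ of \eqref{e:3.1}, in the $I^{\KK}$ term, only those with $r\mid k$ are singular at $q=\zeta^{-1}$. Writing $k=rm$ and using $1-q^{rm}\approx rm\,(1-\zeta q)$ near $q=\zeta^{-1}$ (the leading term of the expansion in Theorem~\ref{t:3.14}), a pole of order $i$ forces all $i$ chosen indices to be divisible by $r$. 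This single mechanism produces both the reduced degree $M/r$ and the powers of $r$ in the statement: each restricted subsum contributes $r^{-i}s_i$ of the reduced ranges, where $s_i(k)=\sum_{1\le j_1<\cdots<j_i\le k}(j_1\cdots j_i)^{-1}$ is the cohomological analogue of $e_i(k)$ from the $\zeta=1$ proof.

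First I would establish the three localized lemmas. For the $I^{\KK}$ factor, the degree-$0$ part of $\bigl(\tfrac{1}{1-q}I^{\KK}\bigr)$ at $q=\zeta^{-1}$ is obtained from \eqref{e:3.1} by keeping in every $e_i$ only the subsum over indices divisible by $r$; this replaces the multinomial prefactor \eqref{e:3.2} by its reduced value $\tfrac{(5d')!}{(d'!)^5}$ and each $e_i$ by $r^{-i}s_i$ of the reduced ranges $5d'$ and $d'$, so that, under $H=1-P$ and the local coordinate $-z$ identified to leading order with $1-q^{r}$ (equivalently $r(1-\zeta q)$), its internal combinatorics reproduce the expansion of $\tfrac{1}{-z}I^{H}(0)$ at degree $M/r$, the overall front $\tfrac{1}{1-q}$ excepted (see below). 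For the $\exp$ factor, the vanishing $\e_0(Q)=0$ (Theorem~\ref{t:3.1}) removes the $i=0$ contribution; only the summands with $r\mid k$ and $i=1$ survive in degree $0$, and the exponent collapses to the cohomological mirror map $\tau(Q)\bigl(d'-\tfrac{H}{z}\bigr)$ at degree $d'$. For the polynomial factor, setting $q=\zeta^{-1}$ and $P=1$ leaves the constant $r_0(\zeta^{-1},Q)$, the analogue of $c(Q)$.

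Assembling the three factors, the degree-$0$ localization of $\tfrac{1}{1-q}J^{\KK}(0)$ at $q=\zeta^{-1}$ is the cohomological $\tfrac{1}{-z}J^{H}(0)$ read off at effective degree $M/r$. Since $J^{H}(0)$ contains only $\tfrac{H^2}{(-z)^2}$ and $\tfrac{H^3}{(-z)^3}$ (and no $\tfrac{H}{(-z)}$) term, the coefficient $a_{M,\zeta}$ of $\tfrac{1-P}{1-\zeta q}$ vanishes; the coefficients $d'\GW_{d'}$ and $-2\GW_{d'}$ of $\tfrac{H^2}{(-z)^2}$ and $\tfrac{H^3}{(-z)^3}$ then yield, after the $r$-weighting collapses $\tfrac{1}{(-z)^i}$ to $\tfrac{1}{r^i(1-\zeta q)^i}$, exactly $c_{M,\zeta}=\tfrac{(M/r)\GW_{M/r}}{r^2}=\tfrac{\GV^{(3)}_{M/r}}{M^2}$ and $f_{M,\zeta}=\tfrac{-2\GW_{M/r}}{r^3}=\tfrac{-2\GV^{(3)}_{M/r}}{M^3}$, using $\GW_{d'}=\GV^{(3)}_{d'}/d'^{\,3}$ from \eqref{e:1.2}. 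When $r\nmid M$ the reduced degree $M/r$ is not an integer and no degree-$0$ contribution survives, consistently with the convention $\GV^{(\gamma)}_{M/r}=0$, reflecting that poles at primitive $r$-th roots of unity arise only from degrees divisible by $r$.

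The hard part will be the bookkeeping of the overall front $\tfrac{1}{1-q}$, whose behaviour genuinely differs from the $\zeta=1$ case: there $\tfrac{1}{1-q}=\tfrac{1}{-z}$ itself supplies a pole, whereas at $q=\zeta^{-1}$ with $\zeta\neq1$ it is \emph{regular}, contributing only the constant $\tfrac{1}{1-\zeta^{-1}}$. The missing front pole must therefore be resourced entirely from the internal $\tfrac{1}{1-q^{rm}}$ factors, and the correct reconciliation runs through the factorization $1-q^{r}=(1-q)(1+q+\cdots+q^{r-1})$, whose second factor vanishes to first order at $q=\zeta^{-1}$. One must verify that the interplay of this vanishing factor, the constant $\tfrac{1}{1-\zeta^{-1}}$, and the $r^{-i}$ weights from the restricted $e_i$ sums combines to precisely the powers $r^{-2}$ and $r^{-3}$ claimed, with the leading (most singular) coefficients, rather than the full rational functions, matching in each $\deg_{q=\zeta^{-1}}$ stratum via the semi-valuation property \eqref{e:svaluation}.
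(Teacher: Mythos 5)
Your overall route is the same as the paper's: localize the reconstruction formula \eqref{e:2.1} at $q=\zeta^{-1}$, keep the $\deg_{q=\zeta^{-1}}=0$ part, observe that only the factors $\tfrac{1}{1-q^{k}}$ with $r\mid k$ are singular there and that $1-q^{rm}\sim rm(1-\zeta q)$ produces both the reduced degree $M/r$ and the powers of $r$, and then match against $\tfrac{1}{-z}J^{H}(0)$ at degree $M/r$. That core mechanism is correct and is exactly what drives the paper's argument.

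There is, however, a genuine gap in your treatment of the \emph{regular} factors, and it is precisely the point where the paper has to work. The degree-$0$ part of the coefficient of $Q^M$ is not built only from the singular pieces: constants are also of degree $0$, so every factor that is regular at $q=\zeta^{-1}$ contributes its value there. Concretely: (i) the coefficients $I^{\KK}_{j}$ with $j\not\equiv 0\bmod r$ contribute $\pi_+(I^{\KK}_j)(\zeta^{-1})\neq 0$; (ii) in the exponential, the summands with $r\nmid k$ (and more generally $r\nmid kd$) contribute nonzero constants, so it is not true that ``only the summands with $r\mid k$ and $i=1$ survive in degree $0$''; (iii) your claim that the polynomial factor reduces to $r_0(\zeta^{-1},Q)$ is wrong for $\zeta\neq 1$, since $(1-Pq^{d})^{i}\big|_{P=1,\,q=\zeta^{-1}}=(1-\zeta^{-d})^{i}\neq 0$ whenever $r\nmid d$, so all $r_i$ with $i\geq 1$ enter at those degrees. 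The product of all these regular contributions multiplies the singular part, and unless it is identified with $\Psi^{r}(r_0(1,Q))=\Psi^{r}(c(Q))$, the values of $c_{M,\zeta}$ and $f_{M,\zeta}$ are only determined up to an unknown multiplicative $Q$-series. The paper pins this down with Lemma~\ref{lemma_rpower}, a uniqueness argument exploiting the normalization $\tfrac{1}{1-q}J^{\KK}(0)\big|_{\K_+}=1$ together with the fact that the $r_i$ are polynomials in $q$; some substitute for that step is indispensable. By contrast, the ``hard part'' you flag (the front $\tfrac{1}{1-q}$) is not really an issue: that factor is absorbed into $\tfrac{1}{1-q}I^{\KK}_d$ as in \eqref{e:3.1}, and its effect is already accounted for by the leading expansion $\tfrac{1}{1-q^{rd}}\equiv_{q=\zeta^{-1}}^{0}\tfrac{1}{rd(1-\zeta q)}$.
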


\begin{proof}
Using the reconstruction theorem again but localize at $q=\zeta^{-1}$, we have
\[
\begin{split}
    & \frac{1}{1-q} J^{\KK}(0)
    \\
    & \equiv_{q=\zeta^{-1}}^0 \Big[ \sum_{i\geq 0}  I^{\KK}_{ri}(q) Q^{ri} \exp \Big( \sum_{k>0} \frac{i \Psi^{rk} (T_1(Q))}{k} \Big) \Big] 
    \exp\Big( \sum_{k > 0}  \frac{ \Psi^{rk}(T_1(Q))(1-P) }{ rk(1-\zeta q) } \Big) 
    \\
    & \left[ \sum_{j=0}^{r-1} \pi_+( I_{j}^{\KK})(\zeta^{-1}) \exp\Big( \sum_{k>0} \frac{\Psi^k(T_1(Q))}{k} \sum_{l=0}^{j-1}q^{kl} \Big) \Big( \sum_{i=0}^3 r_i(\zeta^{-1},Q) (1-\zeta^{-j})^i \Big) Q^j      \right].
\end{split}
\]
For the above expression, we use the following observations:
\begin{itemize}
    \item For $m=ri+j$ with $0\leq j <r$, we have
    \[
    \begin{split}
        I^{\KK}_m(q) &\equiv_{q=\zeta^{-1}}^0 I^{\KK}_{rk}(q) \cdot \pi_+( I^{\KK}_j)(q) 
        \\
        & \equiv_{q=\zeta^{-1}}^0 I^{\KK}_{rk}(q) \cdot  \pi_+ (I^{\KK}_j) (\zeta^{-1}),
    \end{split}
    \]
    where $\pi_+: \K \rightarrow \K_+$ is the projection map.
    \item With the same notation, we have
    \[
    \begin{split}
        &\exp \Big( \sum_{k>0} \frac{ \Psi^k(T_1(Q)) (1-P^kq^{km}) }{k(1-q^k)} \Big)\\
        &\equiv_{q=\zeta^{-1}}^0 \exp \Big(  \sum_{k>0} \frac{ \Psi^k(T_1(Q)) (1-q^{km}) }{k(1-q^k)} \Big) \exp \Big(  \sum_{k>0} \frac{ \Psi^k(T_1(Q)) \cdot kq^{km}(1-P) }{k(1-q^k)} \Big) \\
        &\equiv_{q=\zeta^{-1}}^0 \exp \Big(  \sum_{k>0} \frac{ \Psi^k(T_1(Q))}{k} \sum_{l=0}^{m-1} q^{kl} \Big) \exp \Big(  \sum_{k>0} \frac{ \Psi^{rk}(T_1(Q)) (1-P) }{rk(1-\zeta q)} \Big) \\
        &\equiv_{q=\zeta^{-1}}^0 \exp \Big( \sum_{k>0} \frac{i \Psi^{rk} (T_1(Q))}{k} \Big)\exp \Big(  \sum_{k>0} \frac{ \Psi^k(T_1(Q))}{k} \sum_{l=0}^{j-1} q^{kl} \Big) 
        \\
        & \qquad\qquad\qquad\qquad\qquad\qquad\qquad\qquad \cdot \exp \Big(  \sum_{k>0} \frac{ \Psi^{rk}(T_1(Q)) (1-P) }{rk(1-\zeta q)} \Big).
    \end{split}
    \]
\end{itemize}
Our expression of $\displaystyle \frac{1}{1-q} J^K(0)$ fits into the following lemma:
\begin{lemma} \label{lemma_rpower}
Let $f,g \in \K$, and $h,h'\in K[q]$. Assume that
\[
f\cdot g = 1 \ ({\rm mod} Q), \quad f \cdot g \cdot h |_{\mathcal{K}_+} =1,
\]
and that
\[
\Psi^r ((f)_{q=1})(\zeta q ) \cdot \Psi^r ((g)_{q=1}) (\zeta q) \cdot h'(q) \Big|_{\mathcal{K}_+}  \equiv_{q= \zeta^{-1},0} 1.
\]
Then we have
\begin{enumerate}
    \item $h'(q) = \Psi^r (h)(\zeta q)$.
    \item 
    \[ 
    \begin{split}
    \Psi^r ((f)_{q=1})(\zeta q ) &\Psi^r ((g)_{q=1}) (\zeta q) h'(q)\Big|_{\mathcal{K}_-}
    \\
    &=  \Psi^r ((f)_{q=1})(\zeta q ) \Psi^r ((g)_{q=1}) (\zeta q) \Big|_{\mathcal{K}_-} \cdot \Psi^r(h(1)).
    \end{split}
    \]
\end{enumerate}
\end{lemma}
\begin{proof}
$h'$ is uniquely determined by induction on $Q$ since we assume that $h'\in K[q]$. It suffices to prove that 
\[
\Psi^r (h)(\zeta q) =  h'(q) = \Psi^r ( (h)_{q=1} )(\zeta q),
\]
which follows from $(h)_{q=1}(q) = h(q)$ since $h\in K[q]$. 

For the second statement, notice that
\[
\Psi^r (h)(\zeta q) \equiv_{q=\zeta^{-1}}^0 \Psi^r (h(1)) .
\]
\end{proof}
We claim that:
\[
\begin{split}
    &\sum_{j=0}^{r-1} \pi_+( I_{j}^K)(\zeta^{-1}) \exp\Big( \sum_{k>0} \frac{\Psi^k(T_1(Q))}{k} \sum_{l=0}^{j-1}q^{kl} \Big) \Big( \sum_{i=0}^3 r_i(\zeta^{-1},Q) (1-\zeta^{-j})^i \Big) Q^j \\
    = & \Psi^r (r_0(1,Q)). 
\end{split}
\]
We prove the claim by taking $h(q)= r_0(q,Q)$ in Lemma \ref{lemma_rpower}. To apply the lemma, we need the following two observations:
\begin{itemize}
    \item 
    \[
    \frac{1}{1-q} I^K(q) \equiv_{q=\zeta^{-1}}^0 \Psi^r \Big( ( \frac{1}{1-q} I^K(q)  )_{q=1} \Big)(\zeta q)_{1-P => \frac{1-P}{r}}.
    \]
    The description at the end means that replacing $1-P$ by $\frac{1-P}{r}$. The results follows from
    \[
    \begin{split}
    \frac{ \prod_{k=1}^{5rd} (1-q^k) }{ \prod_{k=1}^{rd} (1-q^k)^5 } \equiv_{q=\zeta^{-1}}^0  \frac{ (5d)! }{  (d!)^5 }, \quad \frac{1}{1-q^{rd}} \equiv_{q=\zeta^{-1}}^{0} \frac{1}{rd(1-\zeta q)}.
    \end{split}
    \]
    \item 
    \[
    \begin{split}
        & \exp\Big( \sum_{k > 0}  \frac{ \Psi^{rk}(T_1(Q))(1-P) }{ rk(1-\zeta_r q) } \Big) \\
        \equiv_{q=\zeta^{-1}}^0 & \Psi^r \Big( \exp \Big( \sum_{k>0} \frac{\Psi^k(T_1(Q))(1-P)}{k(1-q)} \Big) \Big)_{\substack{ q=>\zeta q \\ 1-P => \frac{1-P}{r}    }}.
    \end{split}
    \]
    The description at the end means that replacing $q$ by $\zeta q$ and $1-P$ by $\frac{1-P}{r}$.
\end{itemize}
Taking $\displaystyle f = \frac{1}{1-q}I^K(q) $ and $\displaystyle g =  \exp\Big( \sum_{k > 0}  \frac{ \Psi^{rk}(T_1(Q))(1-P) }{ rk(1-\zeta q) } \Big) $ in the lemma, we conclude the claim. Changing the variables from $1-P$ to $\frac{1-P}{r}$ does not change any arguments and conclusions in the lemma.

Finally, we get
\[
\begin{split}
\frac{1}{1-q} J^K(0) &\equiv_{q=\zeta^{-1}}^0 \Psi^r \Big( ( \frac{1}{1-q} J^K(0) )_{q=1} \Big)_{\substack{q=>\zeta q \\ 1-P => \frac{1-P}{r}   }}
\\
& = 1 + \frac{(1-P)^2}{r^2} \sum_{d\geq 1} Q^{rd} \Big(  \frac{d\GW_d}{(1-\zeta q)^2} \Big)
+ \frac{(1-P)^3}{r^3} \sum_{d\geq 1} Q^{rd} \Big( \frac{-2\GW_d}{(1-\zeta q)^3}  \Big).
\end{split}
\]
This proves the theorem.
\end{proof}

\bibliographystyle{alpha}

\bibliography{zbib}
    
\end{document}